\let\over=\@@over \let\overwithdelims=\@@overwithdelims
\let\atop=\@@atop \let\atopwithdelims=\@@atopwithdelims
\let\above=\@@above \let\abovewithdelims=\@@abovewithdelims
	\tikzstyle{int}=[draw, fill=blue!20, minimum size=2em]
	\tikzstyle{dot}=[circle, draw, fill=blue!20, minimum size=2em]
	\tikzstyle{init} = [pin edge={to-,thin,black}]
   \def\bA{\boldsymbol A}  
   \def\bB{\boldsymbol B}
\def\be{{\boldsymbol e}}   \def\bE{\boldsymbol E}  \def\EE{\mathbb{E}}
\def\bff{{\boldsymbol f}}
   \def\bI{\boldsymbol I}
   \def\bM{\boldsymbol M}
     \def\PP{\mathbb{P}}
     \def\RR{\mathbb{R}}
\def\bu{{\boldsymbol u}}   \def\bU{\boldsymbol U}  
\def\bv{\boldsymbol v}   \def\bV{\boldsymbol V}  
\def\bw{\boldsymbol w}     
\def\bx{\boldsymbol x}   \def\bX{\boldsymbol X}  
\def\by{\boldsymbol y}     
\def\bz{\boldsymbol z}   \def\bZ{\boldsymbol Z}  
\def\11{\mathbbm{1}}
\def\calN{{\cal  N}}
\newcommand{\bfsym}[1]{\ensuremath{\boldsymbol{#1}}}
\def\balpha{\bfsym \alpha}
           \def\bDelta {\bfsym {\Delta}}
\def\bmu{\bfsym {\mu}}                 
\def\btheta{\bfsym {\theta}}           
          \def\bepsilon{\bfsym \varepsilon}
             \def\bSigma{\bfsym \Sigma}
        \def\bLambda {\bfsym {\Lambda}}
\def\bxi{\bfsym {\xi}}
\DeclareMathOperator{\cov}{cov}
\def\rF{{\mathrm{F}}}
\def\%#1\%{\begin{align}#1\end{align}}
\def\$#1\${\begin{align*}#1\end{align*}}
\newcommand{\eqref}[1]{~(\ref{#1})}
\def\mod{\mathop{\rm mod}}
\def\exp{\mathop{\rm exp}}
\def\cov{\mathop{\rm cov}}
\def\EE{\Expect}
\newcommand{\sign}{{\sf {sign}}}
\def\PP{\mathbb{P}}
\def\simiid{\stackrel{iid}{\sim}}
\newcommand{\abs}[1]{\left| #1 \right|}
\def\bbordermatrix#1{\begingroup \m@th
	\@tempdima 4.75\p@
	\setbox\z@\vbox{%
		\def\cr{\crcr\noalign{\kern2\p@\global\let\cr\endline}}%
		\ialign{$##$\hfil\kern2\p@\kern\@tempdima&\thinspace\hfil$##$\hfil
			&&\quad\hfil$##$\hfil\crcr
			\omit\strut\hfil\crcr\noalign{\kern-\baselineskip}%
			#1\crcr\omit\strut\cr}}%
	\setbox\tw@\vbox{\unvcopy\z@\global\setbox\@ne\lastbox}%
	\setbox\tw@\hbox{\unhbox\@ne\unskip\global\setbox\@ne\lastbox}%
	\setbox\tw@\hbox{$\kern\wd\@ne\kern-\@tempdima\left[\kern-\wd\@ne
		\global\setbox\@ne\vbox{\box\@ne\kern2\p@}%
		\vcenter{\kern-\ht\@ne\unvbox\z@\kern-\baselineskip}\,\right]$}%
	\null\;\vbox{\kern\ht\@ne\box\tw@}\endgroup}
\newcommand{\stepa}[1]{\overset{\rm (a)}{#1}}
\newcommand{\stepb}[1]{\overset{\rm (b)}{#1}}
\newcommand{\stepc}[1]{\overset{\rm (c)}{#1}}
\newcommand{\stepd}[1]{\overset{\rm (d)}{#1}}
\newcommand{\subG}{\mathsf{SG}}
\newcommand{\reals}{\mathbb{R}}
\newcommand{\naturals}{\mathbb{N}}
\newcommand{\Expect}{\mathbb{E}}
\newcommand{\iid}{iid\xspace}
\newcommand{\pth}[1]{\left( #1 \right)}
\newcommand{\qth}[1]{\left[ #1 \right]}
\newcommand{\sth}[1]{\left\{ #1 \right\}}
\definecolor{myblue}{rgb}{.8, .8, 1}
\definecolor{mathblue}{rgb}{0.2472, 0.24, 0.6} 
\definecolor{mathred}{rgb}{0.6, 0.24, 0.442893}
\definecolor{mathyellow}{rgb}{0.6, 0.547014, 0.24}
\newcommand{\sfS}{{\mathsf{S}}}
\newcommand{\sfT}{{\mathsf{T}}}
\def\unifto{\mathop{{\mskip 3mu plus 2mu minus 1mu%
			\setbox0=\hbox{$\mathchar"3221$}%
			\raise.6ex\copy0\kern-\wd0%
			\lower0.5ex\hbox{$\mathchar"3221$}}\mskip 3mu plus 2mu minus 1mu}}
\def\simleq{{{\mskip 3mu plus 2mu minus 1mu%
			\setbox0=\hbox{$\mathchar"013C$}%
			\raise.2ex\copy0\kern-\wd0%
			\lower0.9ex\hbox{$\mathchar"0218$}}\mskip 3mu plus 2mu minus 1mu}}
\def\simleq{\lesssim}
\def\simgeq{{{\mskip 3mu plus 2mu minus 1mu%
			\setbox0=\hbox{$\mathchar"013E$}%
			\raise.2ex\copy0\kern-\wd0%
			\lower0.9ex\hbox{$\mathchar"0218$}}\mskip 3mu plus 2mu minus 1mu}}
\def\simgeq{\gtrsim}
	\theoremstyle{definition}
	\newif\ifmapx
	\edef\jobnametmp{\expandafter\string\csname ic_apx\endcsname}
	\edef\jobnameapx{\expandafter\mkillslash\jobnametmp}
	\edef\jobnameexpand{\jobname}
	\renewcommand{\hat}{\widehat}
	\renewcommand{\tilde}{\widetilde}
	\newcommand{\snr}{{\sf{SNR}}}
	\newcommand{\dist}{{\sf dist}}
	\newtheorem{theorem}{Theorem}
	\newtheorem{lemma}[theorem]{Lemma}
	\newtheorem{corollary}[theorem]{Corollary}
	\theoremstyle{definition}
	\newtheorem{remark}{Remark}
	\newtheorem{myassumption}{Assumption}
\let\over=\@@over \let\overwithdelims=\@@overwithdelims
\let\atop=\@@atop \let\atopwithdelims=\@@atopwithdelims
\let\above=\@@above \let\abovewithdelims=\@@abovewithdelims
\begin{document}

\title{Factor Adjusted Spectral Clustering for Mixture Models}

\date{}

\author{Shange Tang, Soham Jana and Jianqing Fan\thanks{
		S.T. and J.F. are with the Department of Operations Research and Financial Engineering, Princeton University, Princeton, NJ, email: \url{shangetang@princeton.edu} and \url{jqfan@princeton.edu}.
		S.J. is with the Department of ACMS, University of Notre Dame, Notre Dame,
		IN, email: \url{sjana2@nd.edu}.}}


\maketitle

\begin{abstract}
This paper studies a factor modeling-based approach for clustering high-dimensional data generated from a mixture of strongly correlated variables. Statistical modeling with correlated structures pervades modern applications in economics, finance, genomics, wireless sensing, etc., with factor modeling being one of the popular techniques for explaining the common dependence. Standard techniques for clustering high-dimensional data, e.g., naive spectral clustering, often fail to yield insightful results as their performances heavily depend on the mixture components having a weakly correlated structure. To address the clustering problem in the presence of a latent factor model, we propose the Factor Adjusted Spectral Clustering (FASC) algorithm, which uses an additional data denoising step via eliminating the factor component to cope with the data dependency. We prove this method achieves an exponentially low mislabeling rate, with respect to the signal to noise ratio under a general set of assumptions. Our assumption bridges many classical factor models in the literature, such as the pervasive factor model, the weak factor model, and the sparse factor model. The FASC algorithm is also computationally efficient, requiring only near-linear sample complexity with respect to the data dimension. We also show the applicability of the FASC algorithm with real data experiments and numerical studies, and establish that FASC provides significant results in many cases where traditional spectral clustering fails.
\end{abstract}

\noindent%
{\it Keywords:}  
Dependency modeling, dimensionality reduction, data denoising, mislabeling.

\section{Introduction}

\subsection{Overview}

Clustering is an important unsupervised problem in statistics and machine learning that aims to partition data into two or more homogeneous groups. Applications exist in diverse areas, ranging over biology \citep{herwig1999large}, finance \citep{cai2016clustering}, wireless sensing \citep{mamalis2009clustering}, etc. In many modern applications, we encounter high-dimensional data, which can significantly increase the computational complexity of standard clustering algorithms such as $K$-means \citep{kumar2004ASL}. In addition, a stylized feature of high-dimensional data is a common dependence among features that make clustering algorithms statistically inefficient.  These make it essential to produce clustering algorithms that incorporate a dimension reduction-based dependent-adjustment step before performing the clustering step.

Spectral clustering \citep{von2007tutorial,bach2003learning,ng2001spectral} has been at the forefront of such dimension-reduction-based methods due to their efficiency. Spectral methods attempt to utilize a low-dimensional data-driven projection strategy, which can preserve the cluster membership information. For example, \cite{loffler2021optimality} attempts to utilize the properties of projecting the data along the subspace containing the span of the actual cluster centers. The spectral clustering methods provably work well in the presence of sub-Gaussian data \citep{loffler2021optimality,abbe2022ellp}. However, the performance guarantees of the vanilla spectral clustering techniques deteriorate significantly when the underlying distributions have ill-conditioned covariance matrices \citep{davis2021clustering} or highly correlated structures \citep{li2020cast}. This is reasonable as the directions of variations in the data set can be generated from both the directional components of the covariance matrix and the subspace spanned by the cluster centroids, and the standard spectral methods are incapable of differentiating them.

High-dimensional measurements are often highly related and admit a factor structure that result in an ill-conditioned covariance matrix.  Compared to numerous results for clustering spherical data distributions (see \cite{lu2016statistical,jana2024general} and the references therein), few papers in the literature address the challenges of clustering with dependent and ill-conditioned covariance structures. In general, analyzing highly correlated data is a stylized feature in many applications, and the dependency structure adversely affects standard statistical tasks, such as model selection \cite{fan2020factor} and multiple testing \citep{doi:10.1080/01621459.2018.1527700}. Factor modeling \citep{bai2002determining, fan2020statistical} is one of the widely used strategies for analyzing such data; important examples exist in macroeconomics \citep{bai2002determining}, finance \citep{fama2015five}, computational biology \citep{fan2014challenges}, social science \citep{zhou2015measuring}, etc.  It decomposes the measurements into common parts that depend on latent factors through a loading matrix and idiosyncratic components that are weakly correlated.  The number of latent factors that drive dependence among measurements is usually small.

This paper proposes the {\it Factor Adjusted Spectral Clustering (FASC)} algorithm that guarantees remarkable data clustering performance. Our method first obtains dependence-adjusted data  before applying a spectral clustering method. This preliminary step ensures that the transformed data have approximately isotropic covariance, enabling the spectral step to guarantee better performance. We will formally show that our algorithm can guarantee a significantly small clustering error under very general conditions on the data dimensions and the factor loading matrices. Notably, our assumptions allow covariances to be significantly ill-conditioned, where the vanilla spectral method might fail. Simulation studies and real data expositions lend further support to the claimed performance improvement.

To gain the insights of our method,  let us consider the following toy example of the Gaussian mixture model in $d$-dimensions: 
\begin{align}\label{eq:model0}
	\PP(y_i=1)=\PP(y_i=-1)=\frac{1}{2} \quad \mbox{and} \quad \bx_i | y_i \sim \mathcal{N}(y_i \bmu, \bSigma),\quad
	\bmu\in \reals^d,\bSigma\in \reals^{d\times d},
\end{align}
with the goal to recover the unobserved class label $y_i \in \{\pm 1\}$ for each observation $\bx_i$.  
Given $\boldsymbol{\mu}$ and $\bSigma$, the optimal classifier is ${\sign}(\langle  \bx_{i}, \bSigma^{-1} \boldsymbol{\mu} \rangle)$, where $\sign(a)={a\over |a|}$.    The misclassication rate is $\Phi(-\sqrt{{\snr}})$,  where the signal-to-noise ratio is  ${\snr}=\boldsymbol{\mu}^{\sfT} \bSigma^{-1} \boldsymbol{\mu}$. However, when the variables are highly correlated, as the following numerical experiment exhibits, this optimal mislabeling rate is not always achievable by spectral clustering. Consider model  \eqref{eq:model0} with 
\begin{align*}
	\begin{gathered}
		n=200, \quad
		d=100,  \quad
		\bmu = (10,0,\cdots, 0)^{\sfT},\quad
		\bSigma = t\bB\bB^\sfT+  \bI_d,
		t\in \sth{0, 1,\dots,100}\\
		\bB \in \reals^{100\times 3},\quad B_{ij}\simiid \calN(0,1).
	\end{gathered}
\end{align*}	
For each given $t$, we implement the spectral clustering algorithm (\prettyref{algo:spectral} \citep{abbe2022ellp,zhang2022leaveoneout}) on $\{\bx_i\}_{i=1}^n$ to detect the two clusters.
We plot the mislabeling rate of spectral clustering and the optimal rate $\Phi(-\sqrt{{\snr}})$ as a function of $t$ in \prettyref{fig:spectral clustering for different correlation}.
\begin{figure}
	\centering
	\includegraphics[width=\linewidth]{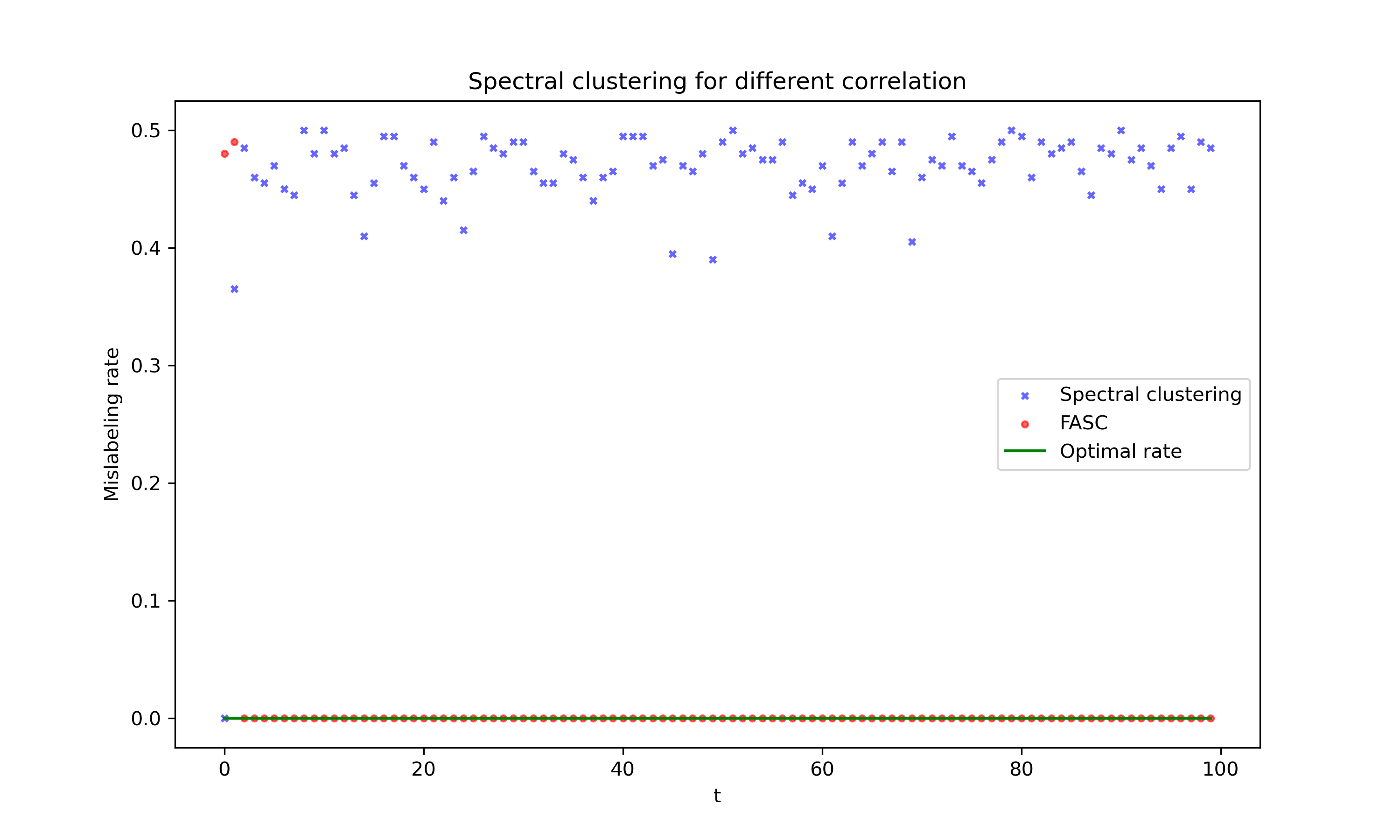}
	\caption{Comparison of mislabeling errors for FASC, vanilla spectral clustering, and the optimal one for  $t=1,\dots,100$. }
	\label{fig:spectral clustering for different correlation}
\end{figure}
The result reveals that the vanilla spectral clustering algorithm's performance deteriorates as the covariance matrix deviates from the isometric structure.
An explanation is as follows. It is known that the vanilla spectral clustering algorithm \citep{abbe2022ellp,zhang2022leaveoneout} can achieve a mislabeling rate of $e^{-\Omega({\|\bmu\|_{2}^{2}}/{\|\bSigma\|_{2}})}$, which might be significantly smaller than the optimal rate $\Phi(-\sqrt{{\snr}})\approx e^{-\Omega({\snr})}$.  When $t$ increases, the {\snr} remains high, but $\|\bSigma\|_2$ can be quite large, resulting in a small 
$ {\|\bmu\|_{2}^{2}}/{\|\bSigma\|_{2}}$, which can explain the poor mislabeling rate of the vanilla spectral clustering, as \prettyref{fig:spectral clustering for different correlation} shows. In comparison, the above figure shows that the mislabeling obtained by the FASC algorithm is close to the optimal performance.

Our proposed method, FASC, has many advantages. Firstly, FASC applies to a wide range of scenarios, including 
\begin{itemize}
	\item the classical factor model with pervasiveness \citep{fan2021robust}, where the singular values of the factor loading matrix scales as to the square root of data dimension,
	\item the weak factor model \citep{onatski2012asymptotics,bryzgalova2015spurious,lettau2020estimating} where the singular values of the factor loading matrix are of constant order,
	\item the sparsity induced factor models \citep{uematsu2022estimation} where the singular values of the factor loading matrix can scale at different rates.
\end{itemize} 
Secondly, our clustering scheme works in high-dimensional setups with dimensions close to the sample size (up to logarithmic factors) and is computationally efficient. We also show that under a general data-generating model that encompasses all the above scenarios, the mislabeling rates produced by FASC are similar to the optimal mislabeling rate for clustering anisotopic Gaussian mixture models \citep{chen2021optimal}. Under the scenarios we consider, where the covariance matrices are ill-conditioned, previous works that achieve nice mislabeling guarantees are either computationally inefficient or require significantly smaller dimensional regimes than the one we consider, e.g., sample complexity of $n= \tilde{\Omega}(d^2)$ or even higher \citep{davis2021clustering,Brubaker2008IsotropicPA, Ge2015LearningMO, 10.1145/3519935.3519953}. Thirdly, although a few previous works considered clustering with an underlying factor structure \citep{longford1992factor,subedi2013clustering,viroli2010dimensionally}, none of them provide a theoretical guarantee for the proposed algorithms since the underlying factor structure makes the analysis of clustering much more complicated. In particular, the idea of factor adjustment in FASC to remove the factor component from the data has been explored before in \cite{fan2020factor} and \cite{doi:10.1080/01621459.2018.1527700} in the context of model selection and multiple testing. Compared with the above, the clustering task is significantly challenging as multiple data-generating clusters make it difficult to differentiate the directional components in the data due to factors and the cluster locations. We provide guarantees for our method FASC by carefully analyzing the relationship between the factors and the idiosyncratic components while preserving the information on cluster memberships.

The rest of the paper is organized as follows. We describe the problem setup of
the mixture model with factor structure in  \prettyref{sec:setup}. \prettyref{sec:method} introduces the FASC algorithm.
The theoretical guarantees of the FASC algorithm is described in \prettyref{sec:theory}. 
We provide numerical examples and real data-based studies in \prettyref{sec:simulation} and \prettyref{sec:real_data}. 
The proof sketch for our main theoretical results is provided in \prettyref{sec:proof_sketch}. The details of the proof and related technical results are included in \prettyref{app:proof_details} and \prettyref{app:technical_details}, respectively, in the supplemental material.

\subsection{Related works}

The study of mixture models has an illustrious history. The problem of data classification from a mixture model dates back to the seminal work \cite{fisher1936use}. \cite{Friedman1989RegularizedDA} showed the necessity for the sample size $n$ to exceed the dimension $d$ to achieve the Bayes optimal rate of misclassification. In scenarios where $d$ surpasses $n$, \cite{bickel2004some} explained the superiority of the ``independence" rule over the Fisher linear discriminant rule. Additionally, \cite{fan2008high} contributed to this body of knowledge by shedding light on the challenges posed by noise accumulation in high-dimensional feature spaces when employing all features for classification. In response, they introduced FAIR, which selects a subset of critical features for high-dimensional classification, yielding commendable performance. When the data is generated from a Gaussian mixture model, another interesting line of work focuses on estimating the underlying mixture distribution, e.g., using EM algorithms \citep{wu2021randomly,balakrishnan2017statistical,pmlr-v65-daskalakis17b,kwon2020algorithm}, the moment methods \citep{ma2023best,doss2023optimal,liu2021settling,liu2022learning}, etc.

Clustering problems, particularly mislabeling minimization, are relatively less explored until recent developments. In the case of sub-Gaussian distributions, when the covariance matrix is known to be nearly spherical, i.e., $\bSigma=\bI_{d}$, many methods have been proposed, including iterative methods \citep{lu2016statistical,jana2023adversarially,jana2024general} and spectral methods \citep{VEMPALA2004841,jin2017phase,Ndaoud2018SharpOR,loffler2021optimality}. Of particular relevance to our investigation, \cite{loffler2021optimality} demonstrated that vanilla spectral clustering achieves the optimal mislabeling rate with a linear sample complexity $n= \Omega (d)$ in the context of Gaussian mixture models with spherical covariance matrices. In particular, they show that when the Gaussian components have centroids given by $\bmu_1,\dots,\bmu_K$ and each of the error coordinates has a variance $\sigma^2$, the optimal mislabeling rate is given by $e^{-\Omega({\snr})}$ with $\snr={\min_{i\neq j\in [K]}\|\bmu_i-\bmu_j\|_2^2\over \sigma^2}$. In scenarios where the covariance matrix $\bSigma$ is unknown, \cite{abbe2022ellp, zhang2022leaveoneout} showed spectral clustering achieves the mislabeling rate $e^{-\Omega(\sfS)}$, where $\sfS={\min_{i\neq j\in [K]}\|\bmu_i-\bmu_j\|^2\over \|\bSigma\|}$, with a (nearly) linear sample size $n = \tilde{\Omega}(d)$ for sub-Gaussian mixture models. However, we will show that the FASC algorithm achieves a much smaller mislabeling rate with an underlying factor structure.

The FASC algorithm we propose here draws inferences about the underlying factor structure using the top eigenvectors of the sample covariance matrix and requires the exact knowledge of the number of factors. This is closely related to the use of principal components analysis (PCA) \citep{RePEc:bes:jnlasa:v:97:y:2002:m:december:p:1167-1179} in factor analysis, which has emerged as the most prevalent technique in the literature. Multiple variants of PCA have also been introduced for factor estimation, including projected PCA \citep{10.1214/15-AOS1364}, diversified projection \citep{Fan2019LearningLF}, and others. Extensive exploration of PCA's asymptotic behavior under high-dimensional scenarios has been conducted in the literature, including \cite{johnstone2009consistency}, \cite{fan2013large}, \cite{shen2016statistics}, \cite{fan2015asymptotics}, and more. On the other hand, a long line of research has been dedicated to the estimation of the number of factors, with notable studies conducted by \cite{bai2002determining}, \cite{luo2009contour}, \cite{hallin2007determining}, \cite{lam2012factor}, and \cite{ahn2013eigenvalue},  \cite{fan2022estimating}, among others. In particular, \cite{fan2023factor} suggested that one can draw significant conclusions if the number of factors is known within a neighborhood of the true value; however, their strategy relies on using a neural network to approximate the underlying structure efficiently. Whether such a strategy is implementable for clustering purposes is beyond the scope of the current work.

\section{Problem setup}
\label{sec:setup}

\subsection{Notations} 
Let $[K]$ denote the set of integers $\{1,\dots,K\}$. Let $\|\cdot\| $ be the $\ell_2$ norm of a vector or the spectral norm of a matrix. We denote by $\|\cdot\|_{\rF}$ the Frobenius norm of a matrix. For a matrix $\bM$, we denote by $\sigma_{\min}(\bM)$ and $\sigma_{\max}(\bM)$ the smallest singular value and the largest singular value of $\bM$, respectively. For a symmetric matrix $\bA$, we denote by $\lambda_{\min}(\bA)$ and $\lambda_{\max}(\bA)$ the smallest eigenvalue and the largest eigenvalue of $\bA$, respectively. In addition, given any symmetric matrix $\bA\in \reals^{K\times K}$ we will denote its eigenvalues as $\lambda_{\max}(\bA):= \lambda_1(\bA)\geq \lambda_2(\bA)\geq\dots\geq \lambda_K(\bA)=\lambda_{\min}(\bA)$.
For a random variable $X\in \reals$, we say $X\in \subG(\sigma^2)$, where $\subG(\sigma^2)$ is the class of (one-dimensional) sub-Gaussian random variables with parameter $\sigma^{2}$, if $\mathbb{E}e^{t(X-\EE\qth{X})} \leq e^{\frac{1}{2}\sigma^{2}t^{2}}$ for any $t \in \mathbb{R}$. For a random vector $\bX \in \mathbb{R}^{d}$, we say $\bX\in \subG_{d}(\sigma^{2})$, where $\subG_d(\sigma^2)$ denotes the class of $d$-dimensional sub-Gaussian random variables with parameter $\sigma^{2}$, if $\bv^{\sfT}\bX \in \subG(\sigma^{2})$ holds for any unit vector $\bv \in \mathbb{R}^{d}$. We use $c,C_0,C_1,\dots$ to denote constants that may differ from line to line.



\subsection{Mixture model with factor structure} 
\label{section_factor}

We consider the additive noise model with $K$ clusters to describe the data $\bx_1,\dots,\bx_n$, 
\begin{align}
	\label{eq:factor_cluster}
	\bx_i\in \reals^d,
	\quad \bx_i=\bmu_{y_i}+\be_i,\quad y_i\in[K], \quad \be_i=\bB \bff_{i} + \bepsilon_i, \quad i\in [n],
\end{align}
where 
\begin{itemize}
	\item $\bmu_1,\dots,\bmu_K$ are the centroids corresponding to the $K$ clusters,
	\item for each $i\in [n]$, $y_i$ denotes the cluster label of data $\bx_i$,
	\item  $\bB\in \reals^{d\times r}$ is the factor loading matrix and $\bff_1,\dots,\bff_n\in \reals^r$ are the latent factors,
	\item the additive noises $\bepsilon_1,\dots,\bepsilon_n$ are independent mean zero random vectors in $\reals^d$.
\end{itemize}
The additive noise model is standard in the literature for modeling mixtures of sub-Gaussian distributions \citep{lu2016statistical,balakrishnan2017statistical,klusowski2016statistical}. We make the following distributional assumptions on the random quantities described above.
\begin{myassumption}\label{asmp:model}
	The factor and error variables are distributed as follows.
	\begin{enumerate}[label=(\alph*)]
		\item $\bff_i\in \subG_r(c_1^2)$ for some constant $c_1>0$ and  $\EE\qth{\bff_i}=0, \EE\qth{\bff_i \bff_i^\sfT}=\bI_r$,
		\item $\{y_i\}_{i=1}^n\simiid y$ with $\PP\qth{y=j}=p_j, j\in [K]$. We further assume that  there is a constant $c\in (0,1)$ such that $\min_{j\in [K]}p_j>\frac c{K}$ and the model is centered, i.e., $\sum_{j=1}^{K}p_{j}\boldsymbol{\mu}_{j} = 0$,
		\item $\bepsilon_i \in \subG_{d}(\sigma^2)$ where $0<\sigma<c'<1$ for a sufficiently small constant $c'>0$,
		\item $d\to \infty$ as $n\to \infty$ and $r,K\leq C$ for a large constant $C$.
	\end{enumerate}
\end{myassumption}

\begin{remark}
	We explain the above assumptions here. \prettyref{asmp:model}(a) is a standard regularity assumption in factor model literature. We assume the covariance of the factor $\bff_i$ is $\bI_r$ so that the model is identifiable (we can always rotate $\bB$ to satisfy this assumption). Also, we require the factor $\bff_i$ to be sub-Gaussian, which is also standard. \prettyref{asmp:model} (b) essentially centralize the cluster centers so that the mean of the data is zero, see \cite[Assumption 3.1]{abbe2022ellp} for similar conditions. Also, we do not want any of the cluster sizes to be too small so that the clusters degenerate,  therefore we require $\min_{j\in [K]}p_j>\frac c{K}$. \prettyref{asmp:model} (c) is a standard sub-Gaussian assumption for the noise. \prettyref{asmp:model} (d) describes the regime we consider, where the number of factors and the number of clusters are both at a constant level, which is common in factor model literature and cluster analysis literature. Also, we consider the scenario where both $n$ and $d$ go to infinity, which is common in modern high-dimensional data.
\end{remark}



\paragraph{Mislabeling rate}
The goal of this work is to construct an algorithm that produces estimates $\hat \by = \{\hat{y}_{i}\}_{i=1}^{n}$ of the true labels $\by = \{{y}_{i}\}_{i=1}^{n}$ to guarantee minimum mislabeling error. Notably the labels can only be learned up to a permutation ambiguity. Given the above, we define the minimum mislabeling as
\begin{equation*}
	\mathcal{M}(\hat{\by}, \by)=n^{-1} \min _{\tau \in S_K}\left|\left\{i \in[n]: \hat{y}_i \neq \tau\left(y_i\right)\right\}\right| .
\end{equation*}
where $S_K$ is the set of all permutations of $[K]$. Our goal is to design an efficient algorithm that obtains $\hat\by$ from the unlabeled data $\{\bx_{i}\}_{i=1}^{n}$ with a small $\mathcal{M}(\hat{\by}, \by)$.

\section{Method: Factor Adjusted Spectral Clustering (FASC)}
\label{sec:method}
For clarity of presentation, we first introduce the vanilla spectral clustering method in \prettyref{algo:spectral}. This algorithm can obtain a vanishing mislabeling error when the data is weakly correlated \citep{loffler2021optimality}.

\begin{algorithm}[H]
	\caption{Spectral clustering}\label{algo:spectral}
	\begin{algorithmic}[1]
		\State {\bf Input:} $\{\boldsymbol{x}_i\}^{n}_{i=1}$. The number of clusters $K$, target dimension of embedding $k\leq K$, error threshold $\epsilon>0$.
		
		\State Compute $
			\hat{\bV}:=(\bxi_{1},\cdots,\bxi_{k}),$
		where $\bxi_{1},\cdots,\bxi_{k}$ are the top $k$ right singular vectors of $(\bx_{1},\cdots,\bx_{n})^{T} $.
		
		\State Conduct $(1+\epsilon)$-approximate $K$-means \citep{kumar2004ASL} on $\{\hat{\bV}^{T}\bx_{i}\}_{i=1}^{n}$, getting $\{\hat{y}_{i}\}_{i=1}^{n}$ and $\{\hat{\boldsymbol{\mu}}_{j}\}_{j=1}^{K}$ such that:
		\begin{equation*}
			\sum_{i=1}^n\|\hat{\bV}^{T}\bx_{i}-\hat{\boldsymbol{\mu}}_{\hat{y}_i}\|_2^2 \leq(1+\epsilon) \min _{\substack{\{\widetilde{\boldsymbol{\mu}}_j\}_{j=1}^K \in  \mathbb{R}^k \\\left\{\widetilde{y}_i\right\}_{i=1}^n \in [K]}}\{\sum_{i=1}^n\|\hat{\bV}^{T}\bx_{i}-\widetilde{\boldsymbol{\mu}}_{\tilde{y}_i}\|_2^2\} .
		\end{equation*}

		\State {\bf Output:} $\{\hat{y}_{i}\}_{i=1}^{n}$.
	\end{algorithmic}
\end{algorithm}
To tackle the case with strong data dependency, we leverage ideas from factor analysis to propose the Factor Adjusted Spectral Clustering (FASC) algorithm (\prettyref{algo:fasc}). 
Under model \eqref{eq:factor_cluster}, we can write
\begin{align}\label{eq:ui-representation}
	\bx_{i}=\bB\bff_{i}+\bu_{i},\quad \bu_{i}=\bmu_{y_{i}} + \bepsilon_{i}.
\end{align}
Note that the idiosyncratic component $\bu_i$ essentially retains the cluster information $y_i$ of the data $\bx_i$ via $\bmu_{y_i}$. Under the assumption that the covariance structure of the idiosyncratic component is well-conditioned, an application of \prettyref{algo:spectral} on $\{\bu_i\}_{i=1}^n$ would have guaranteed label outputs with a small mislabeling error, as the variance component from $\bB \bff$ has been subtracted. Unfortunately, $\{\bu_i\}_{i=1}^n$ would be unknown in practice, and one of our contributions to the FASC algorithm is to estimate it. The approach of FASC is to first learn the latent factor components $\bB\bff_{i}$ by using principal component analysis, estimate the idiosyncratic components $\bu_{i}=\bx_i-\bB\bff_i$, and then apply the spectral clustering algorithm (\prettyref{algo:spectral}) on the estimated idiosyncratic components to obtain the final label estimates. By subtracting the latent factor components, we not only approximately decorrelate the variables but also reduce the noise in the data, enabling the spectral clustering method to achieve the optimal mislabeling rate. The formal description of the FASC algorithm is given below. For ease of proving theoretical results, later on, we incorporate a data splitting step. For simplicity of notations, we assume the sample size is $2n$.
\begin{algorithm}[H]
	\caption{Factor Adjusted Spectral Clustering (FASC)}\label{algo:fasc}
	\begin{algorithmic}[1]
		\State {\bf Input:} Dataset $\{\bx_i\}^{2n}_{i=1}$. The number of factors $r$, clusters $K$, and the target dimension of embedding $k\leq K$.
		\State Split the data $\{\bx_i\}^{2n}_{i=1}$ into two halves, and use the second half of data to compute $\hat{\bV}_{r}$: Let $\hat{\bSigma}:= \frac{1}{n}\sum_{i=n+1}^{2n} \bx_{i} \bx_{i}^\sfT$. Then we write the eigendecomposition of $\hat{\bSigma}$:
		\begin{equation*}
			\hat{\bSigma}=\sum_{j=1}^{d}\hat{\lambda}_{j}\hat{\bv}_{j}\hat{\bv}_{j}^\sfT,
		\end{equation*}
		where $\hat{\lambda}_{1} \geq \hat{\lambda}_{2} \geq \cdots \geq \hat{\lambda}_{d} \geq 0$. Let 
		\begin{equation} \label{hat_V_r}
			\hat{\bV}_{r}:= (\hat{\bv}_{1},\cdots,\hat{\bv}_{r}) \in \mathbb{R}^{d \times r}.
		\end{equation}
		\State Compute $\hat{\bu}_{i}$ for $i=1,\cdots,n$:
		\begin{equation} \label{eq:hat_u_i}
			\hat{\bu}_{i} = \bx_{i} - \hat{\bV}_{r}\hat{\bV}_{r}^\sfT\bx_{i}, \quad i=1,\cdots,n.
		\end{equation}
		
		\State Compute $\{\hat y_i\}_{i=1}^n$ as the output of applying \prettyref{algo:spectral} on $\{\hat{\bu}_{i}\}_{i=1}^n$ with projection dimension $k$ and number of clusters $K$.

		\State {\bf Output:} $\{\hat{y}_{i}\}_{i=1}^{n}$.
	\end{algorithmic}
\end{algorithm}


\begin{remark} In \prettyref{algo:fasc} we obtain the label estimates $\sth{\hat y_1,\dots,\hat y_n}$. By switching the role of $\sth{\bx_1,\dots,\bx_n}$ to estimate the factor components, we can similarly estimate the second half of clustering labels as $\sth{\hat y_{n+1},\dots,\hat y_{2n}}$. However, it is expected that there might be a permutation ambiguity of the estimated labels in the first and second half of the data. This can be easily resolved by matching the labels corresponding to clusters with similar centroid estimates in the first and second halves. This is justified because the label estimates via spectral clustering can lead to consistent estimation of the cluster centroids \cite[Proposition 3.1]{zhang2022leaveoneout}. However, the related technical details are beyond the scope of the current work.
\end{remark}

\begin{remark}
	The projection dimension $k$ required to guarantee desired theoretical properties of \prettyref{algo:spectral} (which extends to the guarantees of the FASC algorithm) depends on the rank of the subspace containing the cluster centers and might not be known beforehand. In such scenarios, one might want to choose $k=K$ as the number of clusters to be extracted is often known from practical experience or according to problem specifications. When the underlying data-generating distributions consist of an unknown number of mixture components, the problem of estimating $K$ is well-defined in the literature. For example, see \cite{hamerly2003learning,sugar2003finding} and the references therein for a detailed discussion. However, such analysis is beyond the scope of the current manuscript, and we will assume that we know the values of $k, K$. See the previous work of \cite{abbe2022ellp} for similar assumptions.
\end{remark}

\section{Theory}
\label{sec:theory}

\subsection{Main results}

For the rest of the paper, we will assume the following regularity conditions on the centroids $\bmu_1,\dots,\bmu_K$ of the underlying model 
\eqref{eq:factor_cluster}.
\begin{myassumption}[Regularity]\label{asmp:regularity}
	
	\begin{enumerate}[label=(\alph*)]
		\item \label{reg1} There is a constant $c_1$ such that $n\geq c_1d \log n$.	
		
		\item \label{reg2} $\mathbb{E}\qth{\boldsymbol{\mu}_{y_{i}}\boldsymbol{\mu}_{y_{i}}^\sfT}$ is rank $k$ and there exist constants $c_2, c_3>0$, such that $$c_2 \geq \lambda_{1} (\mathbb{E}\qth{\boldsymbol{\mu}_{y_{i}}\boldsymbol{\mu}_{y_{i}}^\sfT}) \geq \lambda_{k} (\mathbb{E}\qth{\boldsymbol{\mu}_{y_{i}}\boldsymbol{\mu}_{y_{i}}^\sfT}) \geq c_3.$$
		
		\item \label{reg3} There exist constants $c_4,c_5>0$ such that 
		$$c_5 \geq \max_{i\neq j}\|\boldsymbol{\mu}_{i}-\boldsymbol{\mu}_{j}\|\geq \min_{i\neq j}\|\boldsymbol{\mu}_{i}-\boldsymbol{\mu}_{j}\| \geq c_4.$$
		
	\end{enumerate}
\end{myassumption}

\begin{remark} \prettyref{asmp:regularity}\ref{reg2} is similar to \cite[Assumption 3.1]{abbe2022ellp} and ensures that the spectral projection step in \prettyref{algo:spectral} captures all the major directions in the space spanned by the mean vectors $\bmu_1,\dots,\bmu_K$. The inequality $\min_{i\neq j}\|\boldsymbol{\mu}_{i}-\boldsymbol{\mu}_{j}\| \geq c_4$ in \prettyref{asmp:regularity}\ref{reg3} is a standard separation criteria to guarantee small mislabeling errors. The condition $c_5 \geq \max_{i\neq j}\|\boldsymbol{\mu}_{i}-\boldsymbol{\mu}_{j}\|$ in \prettyref{asmp:regularity} (c) controls the variation in the data set along the space spanned by the centroid vectors to ensure that the variation due to the factor components can be separated easily. See  \cite{abbe2022ellp,zhang2022leaveoneout,loffler2021optimality} for a discussion on such comparable regularity conditions.
	
\end{remark}
Next, we assume a set of general conditions on the factor loading matrix $\bB$.
%

\begin{myassumption}[Factor loading matrix]
	\label{asmp:factors_genr}
	Let $n\geq C d(\log n)^3$ for a large constant $C>0$. Suppose that  $\mathbb{E}\qth{\boldsymbol{\mu}_{y_{i}}\boldsymbol{\mu}_{y_{i}}^\sfT}$ and $\bB\bB^\sfT$ obtain the following spectral decompositions
	\begin{align}
		\label{eq:eigen-decomp}
		\begin{gathered}
			\mathbb{E}\qth{\boldsymbol{\mu}_{y_{i}}\boldsymbol{\mu}_{y_{i}}^\sfT}=\bM \tilde \bLambda\bM^\sfT,\quad
			\bB\bB^\sfT=\bU\bLambda \bU^\sfT,
			\quad \tilde \bLambda,\bLambda\in\reals^{k\times k} \text{ are diagonal}.
		\end{gathered}
	\end{align}
	Then there are constants $\sth{\gamma_i}_{i=1}^4$ such that the following holds true.
	\begin{enumerate}[label=(\alph*)]
		\item \label{genr_factor1} $\sigma_{\min}(\bB)^2 \geq 3 (\|\EE \qth{\bmu_{y_i}\bmu_{y_i}^T} \|+ \sigma^2 )$ and $\sigma_{\max}(\bB) \geq 1$.
		
		\item \label{genr_factor2} $\frac{\sigma_{\max}(\bB)^2}{\sigma_{\min}(\bB)^2} (\sigma_{\max}(\bB) + \sqrt{d}) \leq \gamma_1 (\sigma \vee \frac{1}{\sqrt{\log n}}) \sqrt{\frac{n}{\log n}}$.
		
		\item \label{genr_perpendicularity} $\|\bU^{\sfT}\bM\| \leq \gamma_2$ and $\frac{\sigma_{\max}(\bB)}{\sigma_{\min}(\bB)^2} (\|\bU^T \bM\| + \sigma^2) \leq \gamma_3 (\sigma \vee \sqrt{\frac{1}{\log n}})$. If the noise variances $\sth{\EE\qth{\bepsilon_i\bepsilon_i^\sfT}}_{i=1}^n$ have a common isotropic structure, then the final assumption can be weakened to $\frac{\sigma_{\max}(\bB)}{\sigma_{\min}(\bB)^2} \|\bU^T \bM\| \leq \gamma_4 (\sigma \vee \sqrt{\frac{1}{\log n}})$.
	\end{enumerate}
\end{myassumption}

\begin{remark}
	\prettyref{asmp:factors_genr}\ref{genr_factor1} describes the scenario where the component of data variability due to the factors is much larger than the data variability due to the presence of different clusters. This enables us to filter out the factor component before applying the spectral method in \prettyref{algo:spectral}. \prettyref{asmp:factors_genr}\ref{genr_factor2} is a regularity condition that controls the discrepancy between the different directions of data deviations resulting from the underlying factors. \prettyref{asmp:factors_genr}\ref{genr_perpendicularity} ensures that the space of factors and the space of the centroids lie in nearly orthogonal directions so that their effects can be separated easily. Our assumptions are sufficient to guarantee a consistent estimation of clusters, and optimizing the assumptions is left for future work. However, in the later sections, we will see that these conditions are general enough to include various standard setups in factor modeling.
	
\end{remark}
With the above assumptions, we are now able to state our first main theoretical result. Denote the signal-to-noise ratio
\begin{align}
	\overline{\snr}:= \frac{\min_{i\neq j}\|\bmu_{i}-\bmu_{j}\|^{2}}{\sigma^2},
\end{align}
which we will use to describe the related mislabeling guarantees.
\begin{theorem}\label{thm:general}
	Consider the model (\ref{eq:factor_cluster}). Let $\hat{\by} := (\hat{y}_{1}, \cdots, \hat{y}_{n})$ be the output of \prettyref{algo:fasc}, $\by := (y_{1}, \cdots, y_{n})$ be the true labels.  Then, under  \prettyref{asmp:regularity} and \prettyref{asmp:factors_genr} ,  there exist constants $C,\bar C$ and $c$ such that 
	\begin{enumerate}[label=(\alph*)]
		\item If $\overline{\snr} > C \log n $, then $\lim _{n \rightarrow \infty} \mathbb{P}[\mathcal{M}(\hat{\by}, \by)=0]=1$.
		
		\item If $\bar C \leq \overline{\snr} \leq C \log n$, then $\mathbb{E} [\mathcal{M}(\hat{\by}, \by)] \leq \exp{(-c \cdot \overline{\snr})}$ for all sufficiently large $n$.
	\end{enumerate}
\end{theorem}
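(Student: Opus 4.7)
The overall strategy is to reduce the analysis of FASC to the known guarantee for vanilla spectral clustering on an approximately isotropic, well-separated mixture. Concretely, the algorithm outputs labels obtained by running \prettyref{algo:spectral} on the denoised samples
\[
	\hat{\bu}_i = (\bI - \hat{\bV}_r\hat{\bV}_r^\sfT)\bx_i = (\bI - \hat{\bV}_r\hat{\bV}_r^\sfT)\bB\bff_i + (\bI - \hat{\bV}_r\hat{\bV}_r^\sfT)(\bmu_{y_i}+\bepsilon_i),\qquad i=1,\dots,n.
\]
If $\hat{\bV}_r$ were replaced by an orthonormal basis $\bU$ of the column span of $\bB$, the first term would vanish, the noise coordinates would remain sub-Gaussian with parameter at most $\sigma^2$, and the centroids $(\bI-\bU\bU^\sfT)\bmu_j$ would still be well-separated by \prettyref{asmp:regularity}\ref{reg3} together with the near-orthogonality condition $\|\bU^\sfT\bM\|\le\gamma_2$ in \prettyref{asmp:factors_genr}\ref{genr_perpendicularity}. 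The core task is therefore to quantify how $\hat{\bV}_r$ deviates from $\bU$ and to propagate this perturbation to the final mislabeling bound. The sample-splitting step is used precisely to render $\hat{\bV}_r$ statistically independent of the first half $\{\bx_i\}_{i=1}^n$, so that conditioning on $\hat{\bV}_r$ the data $\{\hat{\bu}_i\}_{i=1}^n$ is a sub-Gaussian mixture with (slightly) perturbed centroids and noise.

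First, I would establish a sharp subspace estimation bound of the form
\[
	\|\hat{\bV}_r\hat{\bV}_r^\sfT - \bU\bU^\sfT\| \;\lesssim\; \frac{\sigma_{\max}(\bB)(\sqrt{d}+\sigma_{\max}(\bB))}{\sigma_{\min}(\bB)^2}\cdot\frac{1}{\sqrt{n}}\cdot\polylog(n),
\]
together with a dimensional bound $\|(\bI-\bU\bU^\sfT)\hat{\bV}_r\|\cdot\sigma_{\max}(\bB)$ that is small. This is a Davis–Kahan (or Wedin $\sin\theta$) argument applied to $\hat{\bSigma}=\tfrac{1}{n}\sum_{i=n+1}^{2n}\bx_i\bx_i^\sfT$, whose expectation is $\bB\bB^\sfT + \EE[\bmu_{y}\bmu_{y}^\sfT] + \EE[\bepsilon\bepsilon^\sfT]$. \prettyref{asmp:factors_genr}\ref{genr_factor1} gives the eigengap between the top $r$ eigenvalues (dominated by $\bB\bB^\sfT$) and the remaining ones (bounded by $\|\EE[\bmu_{y}\bmu_{y}^\sfT]\|+\sigma^2$), while \prettyref{asmp:factors_genr}\ref{genr_factor2} ensures the empirical perturbation $\|\hat{\bSigma}-\EE[\hat{\bSigma}]\|$ is small relative to this gap. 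Standard sub-Gaussian concentration of empirical covariance matrices (Vershynin-type bounds) combined with the sample-size condition $n\ge Cd(\log n)^3$ provides the required deviation.

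Second, conditional on $\hat{\bV}_r$, I would write $\hat{\bu}_i = \tilde{\bmu}_{y_i}+\tilde{\bepsilon}_i$, where the perturbed centroids are
\[
	\tilde{\bmu}_j = (\bI-\hat{\bV}_r\hat{\bV}_r^\sfT)\bmu_j,\qquad \tilde{\bepsilon}_i = (\bI-\hat{\bV}_r\hat{\bV}_r^\sfT)\bepsilon_i + (\bI-\hat{\bV}_r\hat{\bV}_r^\sfT)\bB\bff_i.
\]
Using the subspace bound from the previous step together with \prettyref{asmp:factors_genr}\ref{genr_perpendicularity}, I would show that (i) $\min_{i\neq j}\|\tilde{\bmu}_i-\tilde{\bmu}_j\|^2 \ge (1-o(1))\min_{i\neq j}\|\bmu_i-\bmu_j\|^2$, so the signal is essentially preserved, and (ii) the effective noise $\tilde{\bepsilon}_i$ is sub-Gaussian with parameter at most $(1+o(1))\sigma^2$, because $\|(\bI-\hat{\bV}_r\hat{\bV}_r^\sfT)\bB\|$ is forced to be small by \prettyref{asmp:factors_genr}\ref{genr_factor2}–\ref{genr_perpendicularity}. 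Consequently the effective signal-to-noise ratio on the denoised data agrees with $\overline{\snr}$ up to a $(1-o(1))$ factor, and \prettyref{asmp:regularity}\ref{reg2} together with the projection $\bI-\hat{\bV}_r\hat{\bV}_r^\sfT$ preserves the rank-$k$ structure of the centroid Gram matrix.

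Finally, I would invoke the existing sub-Gaussian spectral clustering guarantee (\eg\ the Abbe–Fan–Wang / Zhang $\ell_p$-perturbation analysis cited throughout the paper) applied to $\{\hat{\bu}_i\}_{i=1}^n$ under the sample-size regime $n\gtrsim d\log n$ of \prettyref{asmp:regularity}\ref{reg1}. This yields
$
	\EE[\cM(\hat{\by},\by)] \le \exp(-c\,\overline{\snr})
$
in the moderate-SNR regime and $\cM(\hat{\by},\by)=0$ with high probability once $\overline{\snr}\gtrsim\log n$, which is exactly the two-part conclusion of the theorem. The main technical obstacle I expect is the second step: controlling $\|(\bI-\hat{\bV}_r\hat{\bV}_r^\sfT)\bB\bff_i\|$ uniformly and tightly enough to preserve $\overline{\snr}$ up to constants under the weak-factor regime, where $\sigma_{\min}(\bB)$ may be of constant order and Davis–Kahan alone would lose large polynomial factors. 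Handling this will likely require a finer decomposition, combining the Wedin bound on $\hat{\bV}_r$ with an independent concentration argument for $\sup_i\|\bff_i\|$ (valid because $\bff_i$ is sub-Gaussian of bounded dimension $r$), plus the strengthened bound in \prettyref{asmp:factors_genr}\ref{genr_perpendicularity} under the isotropic-noise special case.
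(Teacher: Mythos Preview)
Your overall strategy matches the paper's: write $\hat{\bu}_i=\tilde{\bmu}_{y_i}+\tilde{\bepsilon}_i$, control the residual factor contribution via a Davis--Kahan argument, and then invoke the spectral-clustering guarantee of \cite{zhang2022leaveoneout} (stated here as \prettyref{lmm:spectral_clustering_lemma}). Two of your quantitative claims, however, are too optimistic and would leave a gap if pursued literally.

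First, your displayed subspace bound $\|\hat{\bV}_r\hat{\bV}_r^\sfT-\bU\bU^\sfT\|\lesssim n^{-1/2}\,\polylog(n)\cdot(\ldots)$ captures only the sampling error $\|\hat{\bSigma}-\bSigma\|$. It misses a \emph{population-level} bias: the top-$r$ eigenspace of $\bSigma=\bB\bB^\sfT+\EE[\bmu_y\bmu_y^\sfT]+\EE[\bepsilon\bepsilon^\sfT]$ is some $\bV_r$ that is \emph{not} equal to $\bU$, and $\mathrm{dist}(\bU,\bV_r)$ does not shrink as $n\to\infty$. The paper handles this with a two-step perturbation argument: first $\hat{\bV}_r\to\bV_r$ via matrix Bernstein plus Davis--Kahan (\prettyref{lmm:bound_on_Delta_norm}), then $\bV_r\to\bU$ via a second Davis--Kahan step comparing $\bSigma$ to an auxiliary matrix whose top-$r$ eigenspace is exactly $\bU$. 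The second step is where \prettyref{asmp:factors_genr}\ref{genr_perpendicularity} enters, yielding $\mathrm{dist}(\bU,\bV_r)\lesssim \sigma_{\max}(\bB)^{-1}\bigl(\sigma\vee(\log n)^{-1/2}\bigr)$ and hence $\|(\bI-\bV_r\bV_r^\sfT)\bB\|\lesssim \sigma\vee(\log n)^{-1/2}$.

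Second, and as a direct consequence, the effective noise is \emph{not} sub-Gaussian with parameter $(1+o(1))\sigma^2$: the paper only obtains $\tilde{\bepsilon}_i\in\subG_d\bigl(C(\sigma^2\vee\tfrac{1}{\log n})\bigr)$, and likewise the centroid separation is preserved only up to a fixed constant factor (since $\|\bU^\sfT\bM\|\le\gamma_2$ is merely a constant, not $o(1)$). The unavoidable $\tfrac{1}{\log n}$ floor on the noise variance is precisely why the theorem splits into cases (a) and (b): when $\sigma^2\ge\tfrac{1}{\log n}$ the effective SNR is $\Theta(\overline{\snr})$ and part (b) follows; when $\sigma^2<\tfrac{1}{\log n}$ one automatically has $\overline{\snr}\gtrsim c_4^2\log n$, and the effective SNR is still $\gtrsim\log n$, giving exact recovery in part (a). Your ``$(1+o(1))$'' claims would, if true, collapse the two cases into one, but they are not attainable under the stated assumptions.
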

Theorem \ref{thm:general} shows that the FASC algorithm proposed in \prettyref{algo:fasc} achieves the exponentially small mislabeling rate $\exp{(-c \cdot \overline{\snr})}$ with near linear (with respect to the data dimension) sample complexity. The above mislabeling rate is significantly smaller than the existing error bounds in the literature that are attained by the vanilla spectral clustering method in \prettyref{algo:spectral}. For example, the mislabeling rates for the spectral clustering algorithm presented in \cite{abbe2022ellp,zhang2022leaveoneout} is given by $\exp{(- \Omega (\sfS))}$ where $\sfS := \frac{\min_{i\neq j}\|\bmu_{i}-\bmu_{j}\|^{2}}{\|\bB\bB^T + \sigma^2 \bI_d\|}$.  Note that $\|\bB\bB^\sfT\| \geq \max_{j \leq k} \|\bB_j\|^2$  is typically of order $d$, where $\bB_j$ is the $j^{th}$ column of $\bB$. Thus,
$\sfS$ is typically an order of magnitude smaller than $\overline \snr$.
This shows that our results are comparatively much stronger. The performance difference between FASC and spectral clustering will be further demonstrated by simulations in \prettyref{sec:simulation}. 

\subsection{Implications for factor model with the pervasiveness condition}
\label{sec:factorr_model_with_pervasiveness}

In this subsection, we assume the factor loading matrix $\bB$ satisfies the following ``pervasiveness" condition. 
\begin{myassumption}[Pervasiveness] \label{asmp:pervasiveness}
	$C_1d \leq \sigma_{\min}^2(\bB) \leq\sigma_{\max}^2(\bB) \leq C_2 d$ for constants $C_1,C_2>0$.  \end{myassumption}
This condition is common in the related literature (see \cite{fan2021robust} for discussions) regarding modeling purposes. For example, the condition is satisfied if we assume that each of the entries $B_{ij}$ are generated \iid from a $\subG(1)$ distribution. In addition, in high-dimensional scenarios, this assumption helps separate the factor component from the idiosyncratic component in the data, enabling a consistent learning of the factors. 
\begin{myassumption}[Approximate perpendicularity]\label{asmp:approximate_perpendicularity_pervasive}
There exists a small enough constant $c$ for which the eigen-decompositions of $\mathbb{E}\qth{\boldsymbol{\mu}_{y_{i}}\boldsymbol{\mu}_{y_{i}}^\sfT}$ and $\bB\bB^\sfT$ as defined in \eqref{eq:eigen-decomp} satisfy
	\begin{equation*} 
		\|\bU^\sfT\bM\| \leq \min \sth{\sqrt{d} \pth{\sigma \vee \sqrt{\frac{1}{\log n}}} , c}.
	\end{equation*}
\end{myassumption}

\begin{remark}
\prettyref{asmp:approximate_perpendicularity_pervasive} can be easily satisfied in the high-dimensional regime, which is the area of interest here. A simple example that satisfies the above assumption is that the space of the mean vectors and the space of the factor loadings are generated independently. In particular, consider the Grassmannian manifold $G_{d,k}$ that consists of all $k$-dimensional subspaces of $\mathbb{R}^{d}$ and let $\text{Unif} (G_{d,k})$ be the uniform distribution on the collection of randomly chosen $k$-dimensional subspace of $\mathbb{R}^{d}$ (see \cite{vershynin2018high} for a detailed description). Then we have the following result (see \prettyref{app:justify_approx_perp_lemma_proof} for a proof) that shows that  \prettyref{asmp:approximate_perpendicularity_pervasive} holds with a high probability whenever $d\geq C_1\log n$ for a large constant $C_1>0$.
\end{remark}
\begin{lemma} \label{lmm:justify_approx_perp_lemma}
Let $\bB \in \RR^{d\times r}$ be a fixed matrix, $\bU \in \RR^{d\times r}$ be the matrix of left singular vectors of $\bB$, and $\bM \sim \text{Unif} (G_{d,k})$. Then we have the following for some constants $c,\tilde c>0$.
\begin{itemize}
	\item If $d \geq c\log n$ then $\|\bU^\sfT\bM\| \leq \tilde c$ with a probability at least $1-n^{-9}$.
	\item If $d > c (\log n )^2$, then $\|\bU^\sfT\bM\| \leq \frac{\tilde c}{\sqrt{\log n}}$ with a probability at least $1-n^{-9}$.
\end{itemize}
\end{lemma}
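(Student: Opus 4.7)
The plan is to combine the rotational invariance of the Haar measure on $G_{d,k}$ with a Gaussian realization of a uniformly random subspace, and then appeal to standard Gaussian-matrix concentration. I would represent $\bM$ by a $d\times k$ matrix with orthonormal columns; $\|\bU^\sfT\bM\|$ is well-defined since it is invariant under the right $O(k)$-ambiguity in the choice of basis. Haar invariance on the Stiefel manifold gives $\bO^\sfT\bM\eqdistr\bM$ for any fixed $\bO\in O(d)$. Choosing $\bO$ whose first $r$ columns are the columns of $\bU$ (possible since $\bU$ has orthonormal columns), one has $\bU^\sfT=[\bI_r,\bzero]\bO^\sfT$, so
\begin{equation*}
	\bU^\sfT\bM \;=\; [\bI_r,\bzero]\bO^\sfT\bM \;\eqdistr\; [\bI_r,\bzero]\bM \;=:\; \bM_{[r]},
\end{equation*}
the top $r\times k$ block of $\bM$. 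Hence $\|\bU^\sfT\bM\|\eqdistr\|\bM_{[r]}\|$.

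Next I would use the well-known realization $\bM\eqdistr \bG(\bG^\sfT\bG)^{-1/2}$, where $\bG\in\reals^{d\times k}$ has i.i.d.\ $\calN(0,1)$ entries. Writing $\bG_{[r]}$ for the top $r\times k$ block of $\bG$, this gives
\begin{equation*}
	\|\bU^\sfT\bM\| \;\eqdistr\; \|\bG_{[r]}(\bG^\sfT\bG)^{-1/2}\| \;\leq\; \|\bG_{[r]}\|\cdot\frac{1}{\sigma_{\min}(\bG)}.
\end{equation*}
I would then apply the standard Gordon/Vershynin concentration for Gaussian matrices: for an $m\times\ell$ matrix $\bA$ with i.i.d.\ $\calN(0,1)$ entries and $m\geq\ell$, both $\sigma_{\min}(\bA)$ and $\sigma_{\max}(\bA)$ lie in $[\sqrt{m}-\sqrt{\ell}-t,\,\sqrt{m}+\sqrt{\ell}+t]$ with probability at least $1-2e^{-t^2/2}$. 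Taking $t=C_0\sqrt{\log n}$ for a sufficiently large absolute constant $C_0$, and using that $r,k$ are constants (Assumption~\ref{asmp:model}(d)), one obtains $\|\bG_{[r]}\|\leq C_1\sqrt{\log n}$ and $\sigma_{\min}(\bG)\geq\sqrt{d}-\sqrt{k}-C_0\sqrt{\log n}$, each with probability at least $1-n^{-10}$. For $d\geq c\log n$ with $c$ large enough, this gives $\sigma_{\min}(\bG)\geq\sqrt{d}/2$, and a union bound yields
\begin{equation*}
	\|\bU^\sfT\bM\|\;\leq\;\frac{2C_1\sqrt{\log n}}{\sqrt{d}}
\end{equation*}
with probability at least $1-n^{-9}$.

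Both claims of the lemma then follow by substitution. For $d\geq c\log n$ with $c$ large enough, the right-hand side is bounded by a constant $\tilde c := 2C_1/\sqrt{c}$, proving the first bullet. For $d\geq c(\log n)^2$, the right-hand side is bounded by $2C_1/\sqrt{c\log n}\leq \tilde c/\sqrt{\log n}$, proving the second bullet. I do not anticipate a substantive obstacle: the only step requiring any care is the rotational-invariance reduction to the top $r\times k$ block, after which the argument reduces to textbook concentration of Wishart eigenvalues and of the spectral norm of a small Gaussian submatrix.
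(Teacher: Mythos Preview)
Your argument is correct, but it takes a different route from the paper. The paper works columnwise on $\bU$: writing $\bU=(\bw_1,\ldots,\bw_r)$, it invokes the concentration inequality for the length of the projection of a fixed unit vector onto a uniformly random $k$-subspace (Vershynin, Lemma~5.3.2), namely $\PP(|\|\bw_i^\sfT\bM\|-\sqrt{k/d}|\geq t)\leq 2e^{-\xi d t^2}$, applies it to each $\bw_i$, takes a union bound over $i\in[r]$, and then passes to the operator norm via $\|\bU^\sfT\bM\|\leq\sqrt{\sum_i\|\bw_i^\sfT\bM\|^2}$, arriving at the same $\sqrt{(\log n)/d}$ bound. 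Your approach instead uses Haar invariance to reduce $\bU^\sfT\bM$ to the top $r\times k$ block of $\bM$, realizes $\bM$ via a Gaussian matrix, and controls the operator norm directly through $\|\bG_{[r]}\|/\sigma_{\min}(\bG)$ and Gordon's singular-value bounds. The paper's proof is marginally lighter in that it needs only one black-box lemma, while yours is a bit more spectral and would scale more naturally if $r,k$ were allowed to grow; in the present constant-$r,k$ regime the two are equivalent up to constants.
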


As \prettyref{asmp:pervasiveness} and \prettyref{asmp:approximate_perpendicularity_pervasive} are specialized versions of \prettyref{asmp:factors_genr}, we can conclude the following.

\begin{corollary}\label{cor:pervasive}
	The outcomes of \prettyref{thm:general} hold under the pervasiveness condition in \prettyref{asmp:pervasiveness} and the approximate perpendicularity condition \prettyref{asmp:approximate_perpendicularity_pervasive}.
\end{corollary}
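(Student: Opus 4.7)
The plan is to recognize that \prettyref{cor:pervasive} is a corollary by specialization: I would show that \prettyref{asmp:pervasiveness} and \prettyref{asmp:approximate_perpendicularity_pervasive}, together with the standing \prettyref{asmp:model} and \prettyref{asmp:regularity}, imply every clause of the general \prettyref{asmp:factors_genr}. Once this is done, \prettyref{thm:general} applies verbatim and produces the corollary with no further work. The proof is therefore essentially a bookkeeping exercise tracking how the sharp scalings $\sigma_{\min}^2(\bB),\sigma_{\max}^2(\bB)\asymp d$ and $\|\bU^\sfT\bM\| \leq \min\{\sqrt{d}(\sigma\vee 1/\sqrt{\log n}),\, c\}$ propagate through the generic inequalities of \prettyref{asmp:factors_genr}.

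For \prettyref{asmp:factors_genr}\ref{genr_factor1} I would combine $\sigma_{\min}^2(\bB)\geq C_1 d$ from pervasiveness with the bound $\|\mathbb{E}[\bmu_{y_i}\bmu_{y_i}^\sfT]\|\leq c_2$ supplied by \prettyref{asmp:regularity}\ref{reg2} and $\sigma^2<1$ from \prettyref{asmp:model}; since $d\to\infty$, eventually $C_1 d \geq 3(c_2+\sigma^2)$, and $\sigma_{\max}(\bB)\geq\sqrt{C_1 d}\geq 1$. Clause \ref{genr_factor2} reduces to verifying that the left-hand side $O(\sqrt{d})$ (because the condition number is bounded under pervasiveness and $\sigma_{\max}(\bB)+\sqrt{d}=O(\sqrt d)$) does not exceed a constant multiple of the right-hand side, which is at least of order $\sqrt{n/(\log n)^2}$. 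This follows from the sample-size requirement $n\geq Cd(\log n)^3$ already built into \prettyref{asmp:factors_genr}, which trivially yields $d(\log n)^2\leq n/\log n$.

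The only clause requiring a little care is \ref{genr_perpendicularity}. The bound $\|\bU^\sfT\bM\|\leq\gamma_2$ is immediate by taking $\gamma_2=c$ in \prettyref{asmp:approximate_perpendicularity_pervasive}. For the second inequality, pervasiveness gives
\[
\frac{\sigma_{\max}(\bB)}{\sigma_{\min}(\bB)^2} \leq \frac{\sqrt{C_2}}{C_1\sqrt{d}},
\]
and the $\sqrt{d}(\sigma\vee 1/\sqrt{\log n})$ cap on $\|\bU^\sfT\bM\|$ cancels exactly the $\sqrt d$ in the denominator, leaving a constant multiple of $(\sigma\vee 1/\sqrt{\log n})$. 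The $\sigma^2$ contribution is smaller by a factor of $\sigma/\sqrt d = o(1)$, so the combined bound is $\gamma_3(\sigma\vee 1/\sqrt{\log n})$ for a suitable $\gamma_3$. The only subtle point, and the place I would be most careful, is that the minimum structure in \prettyref{asmp:approximate_perpendicularity_pervasive} is precisely what is needed to simultaneously supply both bounds in \ref{genr_perpendicularity}: the absolute constant cap feeds $\gamma_2$, while the dimension-scaled cap is what survives after dividing by $\sigma_{\min}^2(\bB)\asymp d$. With all three clauses verified, invoking \prettyref{thm:general} completes the proof.
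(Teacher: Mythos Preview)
Your proposal is correct and matches the paper's approach exactly: the paper simply asserts that \prettyref{asmp:pervasiveness} and \prettyref{asmp:approximate_perpendicularity_pervasive} are specialized versions of \prettyref{asmp:factors_genr}, and your clause-by-clause verification is precisely the routine check that justifies this assertion.
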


\subsection{Implications for models with weak factors}

Next, we describe the ``weak factor" model that satisfies the following condition.
\begin{myassumption}[Weak factor condition]
	\label{asmp:sigma_val_weak}
	There exists a constant $C$, such that $3(\|\EE \qth{\bmu_{y_i}\bmu_{y_i}^T} \|+ \sigma^2) \leq \sigma_{\min}^{2}(\bB) \leq \sigma_{\max}^{2}(\bB) \leq C$.
\end{myassumption}
The above modeling assumption includes factor models previously studied in \cite{onatski2012asymptotics,bryzgalova2015spurious,lettau2020estimating}. For example, \cite{lettau2020estimating} assumes that $\bB^\sfT \bB$ is approximately $\bI_r$, which implies that $\sigma_{\max}(\bB)$ is of constant order. The weak factor scenario makes it harder to separately detect the variation due to factors and cluster centroids, as both of them are now of constant order. As a result, we will need a slightly stronger ``approximate perpendicularity" condition.
The approximate perpendicularity assumption we use here can be easily satisfied in the high-dimensional regime whenever $d \geq \Omega((\log n)^2)$ (\prettyref{lmm:justify_approx_perp_lemma}).

\begin{myassumption}[Approximate perpendicularity for weak factor models]\label{asmp:approximate_perpendicularity_weak} Assume
\begin{equation*} 
	\|\bU^\sfT\bM\| \leq \min \sth{\sigma \vee \sqrt{\frac{1}{\log n}} , c}.
\end{equation*}
for some $c> 0$, where we adopt the notation of eigen-decomposition in \eqref{eq:eigen-decomp}.
\end{myassumption}

As the above assumptions satisfy \prettyref{asmp:factors_genr}, we  obtain the following theoretical guarantees for the FASC algorithm under the ``weak factor" scenario.
\begin{corollary}\label{cor:weak}
	The outcomes of \prettyref{thm:general} hold under the conditions in weak factor condition in \prettyref{asmp:sigma_val_weak} and the corresponding perpendicularity condition \prettyref{asmp:approximate_perpendicularity_weak}.
\end{corollary}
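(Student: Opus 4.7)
The plan is to show that the conditions in \prettyref{asmp:sigma_val_weak} and \prettyref{asmp:approximate_perpendicularity_weak}, when combined with the standing \prettyref{asmp:model}, \prettyref{asmp:regularity}, and the sample-size requirement $n \geq C d (\log n)^3$, imply the more general \prettyref{asmp:factors_genr}. Once this reduction is established, the conclusion follows immediately from \prettyref{thm:general}, in direct parallel to the derivation of \prettyref{cor:pervasive}.

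First I would verify \prettyref{asmp:factors_genr}\ref{genr_factor1}. The bound $\sigma_{\min}(\bB)^2 \geq 3(\|\EE[\bmu_{y_i}\bmu_{y_i}^\sfT]\| + \sigma^2)$ is exactly the content of \prettyref{asmp:sigma_val_weak}, and since $\|\EE[\bmu_{y_i}\bmu_{y_i}^\sfT]\| \geq c_3$ by \prettyref{asmp:regularity}\ref{reg2}, we obtain $\sigma_{\max}(\bB) \geq \sigma_{\min}(\bB) \geq \sqrt{3 c_3}$, so the normalization $\sigma_{\max}(\bB) \geq 1$ follows after absorbing constants (or WLOG after a rescaling). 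For \prettyref{asmp:factors_genr}\ref{genr_factor2}, the weak factor hypothesis makes $\sigma_{\min}(\bB)$ and $\sigma_{\max}(\bB)$ both of constant order, so the left-hand side collapses to $O(\sqrt{d})$, while the right-hand side is at least $\sqrt{n}/\log n$; the sample-size condition $n \geq C d (\log n)^3$ then yields $\sqrt{d} \leq \sqrt{n}/\log n$ for $C$ sufficiently large.

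For \prettyref{asmp:factors_genr}\ref{genr_perpendicularity}, the prefactor $\sigma_{\max}(\bB)/\sigma_{\min}(\bB)^2$ is again $O(1)$. The first inequality $\|\bU^\sfT \bM\| \leq \gamma_2$ is supplied directly by the $c$-bound in \prettyref{asmp:approximate_perpendicularity_weak}; the second inequality follows from the companion bound $\|\bU^\sfT \bM\| \leq \sigma \vee \sqrt{1/\log n}$ together with the observation that $\sigma^2 \leq \sigma \leq \sigma \vee \sqrt{1/\log n}$, using $\sigma \leq c' < 1$ from \prettyref{asmp:model}. No step presents a real obstacle — the proof is a mechanical specialization. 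The only conceptual point worth emphasizing is that, because both $\|\bB\bB^\sfT\|$ and $\|\EE[\bmu_{y_i}\bmu_{y_i}^\sfT]\|$ are of comparable constant order in the weak factor regime, the perpendicularity budget must scale like $\sigma \vee 1/\sqrt{\log n}$, which is strictly tighter than the $\sqrt{d}(\sigma \vee 1/\sqrt{\log n})$ afforded in the pervasive case, and \prettyref{asmp:approximate_perpendicularity_weak} is calibrated exactly to make the weak factor scenario fit into the envelope of \prettyref{thm:general}.
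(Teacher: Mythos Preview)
Your proposal is correct and takes exactly the approach the paper uses: the paper simply asserts that \prettyref{asmp:sigma_val_weak} and \prettyref{asmp:approximate_perpendicularity_weak} satisfy \prettyref{asmp:factors_genr} and then invokes \prettyref{thm:general}, and you have filled in the routine verification of each part of \prettyref{asmp:factors_genr} in the expected way.
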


\subsection{Implications for models with disproportionate factor loadings}

Finally, we analyze the scenario where the smallest and the largest singular values of the factor loading matrix can scale with the dimension at a significantly different rate.
\begin{myassumption}[Disproportionate singular value condition] 
	\label{asmp:disproportionate_factors}
	There exists a constant $C$ such that $3 (\|\EE \qth{\bmu_{y_i}\bmu_{y_i}^T} \|+ \sigma^2 ) \leq \sigma_{\min}^{2}(\bB) \leq \sigma_{\max}^{2}(\bB) \leq C d$ and $\lim_{d\to \infty}{\sigma_{\min}(\bB)\over \sigma_{\max}(\bB)}\to 0$.
\end{myassumption}
The above modeling assumption includes the sparsity-induced factor model previously studied in the literature. For example, while studying the factor models in \cite{uematsu2022estimation}, the authors assume that the eigenvalues of $\bB\bB^\sfT$ satisfy $\lambda_{\ell}(\bB\bB^\sfT)=d^{\alpha_\ell}$ where $\alpha_1 >\dots>\alpha_r$ are constants in $(0,1)$. Under such a disproportionate eigenvalue scenario, an even stronger perpendicularity assumption is needed to ensure that the factors can be learned properly and the FASC algorithm can achieve the optimal mislabeling rate. This assumption can be satisfied easily by choosing the columns of the factor loading matrix from an orthogonal space with respect to the space of the true centroids. We note that it is beyond the scope of the current work to see whether this condition can be satisfied with a more classical data-generating setup, similar to the one mentioned in \prettyref{lmm:justify_approx_perp_lemma}.
\begin{myassumption}[Approximate perpendicularity]\label{asmp:approximate_perpendicularity_disproportionate}
Let the eigen-decomposition of $\mathbb{E}\qth{\boldsymbol{\mu}_{y_{i}}\boldsymbol{\mu}_{y_{i}}^\sfT}$ and $\bB\bB^\sfT$ be defined in \eqref{eq:eigen-decomp}. We assume the following holds for some constant $c$:
 \begin{equation*} 
     \|\bU^\sfT\bM\| \leq \min \sth{\frac{\sigma}{\sqrt{d}}, c}.
 \end{equation*}
\end{myassumption}
In view of the above assumptions, we have the following result.
\begin{corollary}\label{cor:dispro}
	Consider the model (\ref{eq:factor_cluster}) with $\EE\qth{\bepsilon_i\bepsilon_i^\sfT}= \sigma^2 \bI_d$ and let $n\geq \tilde C d^3(\log n)^3$ for a large constant $\tilde C>0$. Then the outcomes of \prettyref{thm:general} hold under the conditions in \prettyref{asmp:disproportionate_factors} and \prettyref{asmp:approximate_perpendicularity_pervasive}.
\end{corollary}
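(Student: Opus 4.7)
The plan is to reduce Corollary \ref{cor:dispro} to \prettyref{thm:general} by verifying that the stated hypotheses imply \prettyref{asmp:factors_genr}. (The perpendicularity reference in the statement appears to be a typo for \prettyref{asmp:approximate_perpendicularity_disproportionate}, which is the condition specifically calibrated to the disproportionate regime; I will use that below, as the weaker pervasive-case bound in \prettyref{asmp:approximate_perpendicularity_pervasive} is too loose to recover part \ref{genr_perpendicularity} of \prettyref{asmp:factors_genr} when $\sigma_{\max}(\bB)/\sigma_{\min}^2(\bB)$ can be as large as $\sqrt{d}$.) Once \prettyref{asmp:factors_genr} is in hand, the two mislabeling conclusions follow immediately from \prettyref{thm:general}.

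First I would confirm \prettyref{asmp:factors_genr}\ref{genr_factor1}: the lower bound $\sigma_{\min}^2(\bB)\ge 3(\|\EE[\bmu_{y_i}\bmu_{y_i}^\sfT]\|+\sigma^2)$ is built into \prettyref{asmp:disproportionate_factors}, and combined with \prettyref{asmp:regularity}\ref{reg2} (which gives $\|\EE[\bmu_{y_i}\bmu_{y_i}^\sfT]\|\ge c_3$) this in particular yields $\sigma_{\min}^2(\bB)\ge 3c_3$, so $\sigma_{\max}(\bB)\ge \sigma_{\min}(\bB)\ge 1$ after trivial rescaling of constants. Next I would verify \ref{genr_factor2}. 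Using $\sigma_{\max}^2(\bB)\le Cd$ and $\sigma_{\min}^2(\bB)\gtrsim 1$, one has
$$\frac{\sigma_{\max}(\bB)^2}{\sigma_{\min}(\bB)^2}\bigl(\sigma_{\max}(\bB)+\sqrt{d}\bigr)\;\lesssim\; d\cdot\sqrt{d}\;=\;d^{3/2}.$$
It then suffices to check $d^{3/2}\lesssim (\sigma\vee 1/\sqrt{\log n})\sqrt{n/\log n}$, which in the worst case $\sigma\to 0$ reduces to $n\gtrsim d^3(\log n)^2$, and this is ensured by the hypothesis $n\ge \tilde C d^3(\log n)^3$.

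For the perpendicularity condition \ref{genr_perpendicularity}, the hypothesis $\EE[\bepsilon_i\bepsilon_i^\sfT]=\sigma^2\bI_d$ is precisely what unlocks the weakened final inequality $\frac{\sigma_{\max}(\bB)}{\sigma_{\min}(\bB)^2}\|\bU^\sfT\bM\|\le \gamma_4(\sigma\vee \sqrt{1/\log n})$. The bound $\|\bU^\sfT\bM\|\le c$ is immediate from \prettyref{asmp:approximate_perpendicularity_disproportionate}, while the strengthened bound $\|\bU^\sfT\bM\|\le \sigma/\sqrt{d}$ from the same assumption gives
$$\frac{\sigma_{\max}(\bB)}{\sigma_{\min}(\bB)^2}\|\bU^\sfT\bM\|\;\lesssim\;\sqrt{d}\cdot\frac{\sigma}{\sqrt{d}}\;=\;\sigma\;\le\;\sigma\vee\sqrt{1/\log n},$$
completing the verification. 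With every clause of \prettyref{asmp:factors_genr} established, \prettyref{thm:general} delivers both parts (a) and (b) of the corollary verbatim.

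The main obstacle, and the reason the sample-size requirement must inflate from near-linear to cubic in $d$, is step \ref{genr_factor2}: the disproportionate geometry forces $\sigma_{\max}^2(\bB)/\sigma_{\min}^2(\bB)$ all the way up to order $d$, which combined with the additive $\sqrt{d}$ produces a $d^{3/2}$ left-hand side. Matching this against a right-hand side whose $\sigma$-factor can degenerate to $1/\sqrt{\log n}$ is exactly what forces $n\gtrsim d^3(\log n)^3$, and this is the single tight place in the argument. A smaller technical point worth flagging during the write-up is that the stronger perpendicularity bound $\sigma/\sqrt d$ is essential to absorb the $\sqrt{d}$ blow-up of $\sigma_{\max}(\bB)/\sigma_{\min}^2(\bB)$ in part \ref{genr_perpendicularity}; the bound inherited from \prettyref{asmp:approximate_perpendicularity_pervasive} would be off by a multiplicative factor of $d$, confirming that the intended reference in the corollary is \prettyref{asmp:approximate_perpendicularity_disproportionate}.
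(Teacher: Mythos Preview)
Your proposal is correct and follows essentially the same approach as the paper: both reduce the corollary to \prettyref{thm:general} by verifying each clause of \prettyref{asmp:factors_genr} under the disproportionate hypotheses, using the isotropic-noise relaxation in part \ref{genr_perpendicularity}. You have in fact supplied more detail than the paper's own proof, which merely states that one should ``directly check'' \prettyref{asmp:factors_genr}; your identification of the intended perpendicularity assumption as \prettyref{asmp:approximate_perpendicularity_disproportionate} (rather than the pervasive one referenced in the statement) is also consistent with the paper's proof section.
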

\prettyref{cor:dispro} shows that under this scenario, when a strong version of ``approximate perpendicularity" \prettyref{asmp:approximate_perpendicularity_disproportionate} holds, FASC can still achieve the optimal mislabeling rate. However, the sample complexity needed is $n=\tilde{\Omega} (d^3)$, which is much worse than the counterpart in \prettyref{cor:pervasive} and \prettyref{cor:weak}.

\section{Simulation studies}
\label{sec:simulation}

\subsection{Gaussian mixture models with correlated measurements}


In the following simulations, we generate samples using the model \eqref{eq:factor_cluster} with the number of clusters $K=5$, the number of factors $r=3$, and the data dimension $d$ is varied over the set $\{5,20,100,500\}$. For each such combination of $d,K,r$ we repeat the following data generation process 100 times, indexed by $t=1,2,\cdots,100$, with sample size $n=1000$ each
\begin{itemize}
	\item generate the factor
	loading matrix $\bB\in \mathbb{R}^{d \times r}$ whose rows are drawn from i.i.d. $\mathcal{N}(0,\bI_{r})$
	\item draw \iid vectors $\{\boldsymbol{\theta}_{j}\}_{j=1}^{K}$ from $\mathcal{N}(0,\frac{1}{d}\bI_{d})$ and denote $\boldsymbol{\mu}_{j} = \boldsymbol{\theta}_{j} - \frac{1}{K}\sum_{i=1}^{K}\boldsymbol{\theta}_{i}$
	\item obtain class label $\sth{y_i}_{i=1}^n \simiid \text{Uniform}([K])$, latent factors $\sth{\bff_{i}}_{i=1}^n \simiid \mathcal{N}(0,\bI_{r})$ and noise $\bepsilon_{i} \simiid \mathcal{N}(0,\sigma^{2}\bI_{d})$ with $\sigma= 0.01t$
	\item produce the observed data data set $\sth{\bx_i}_{i=1}^n$ with $\bx_i=\bmu_{y_i}+\bB \bff_i + \bepsilon_i$.
\end{itemize}
We then perform the FASC (\prettyref{algo:fasc}) and the vanilla spectral clustering (\prettyref{algo:spectral}) on these generated data and compare their mislabeling proportions. For simulation purposes, we use the entire data set to estimate the factors and perform the spectral step rather than using data splitting as required in \prettyref{algo:fasc}.

In \prettyref{fig:FASC VS Spectral clustering}, we plot the mislabeling rate of FASC (red points) and spectral clustering (blue points) with respect to $\sigma=\sth{0.01t}_{t=1}^{100}$ for different dimensions $d=5,20,100,500$ respectively. From \prettyref{fig:FASC VS Spectral clustering} it can be observed that vanilla spectral clustering fails for highly correlated data and the FASC algorithm achieves improved mislabeling errors regardless of the noise level.  The explanation is that the ``noise'' $\bB \bff_i$ gets suppressed or removed by the factor adjustment and hence increases the SNR.  In the extreme case, FASC achieves nearly exact recovery (mislabeling rate is close to $0$) when $\sigma$ is small. When $\sigma$ grows, the mislabeling rate of FASC grows until it reaches the level of spectral clustering.     When $t=1$, even with the noise from the common part $\bB \bff_i$ is removed, the signals are still too weak for good labeling.
The simulation results support \prettyref{thm:general}: when $\overline \snr$ exceeds some constant multiple of $\log n$, \prettyref{algo:fasc} is able to recover all the labels with high probability; when $\overline{\snr}$ is not that large, FASC can still recover the cluster labels significantly better than the spectral method, albeit with a non-zero mislabeling error.

\begin{figure}
    \centering
    \includegraphics[width=\linewidth]{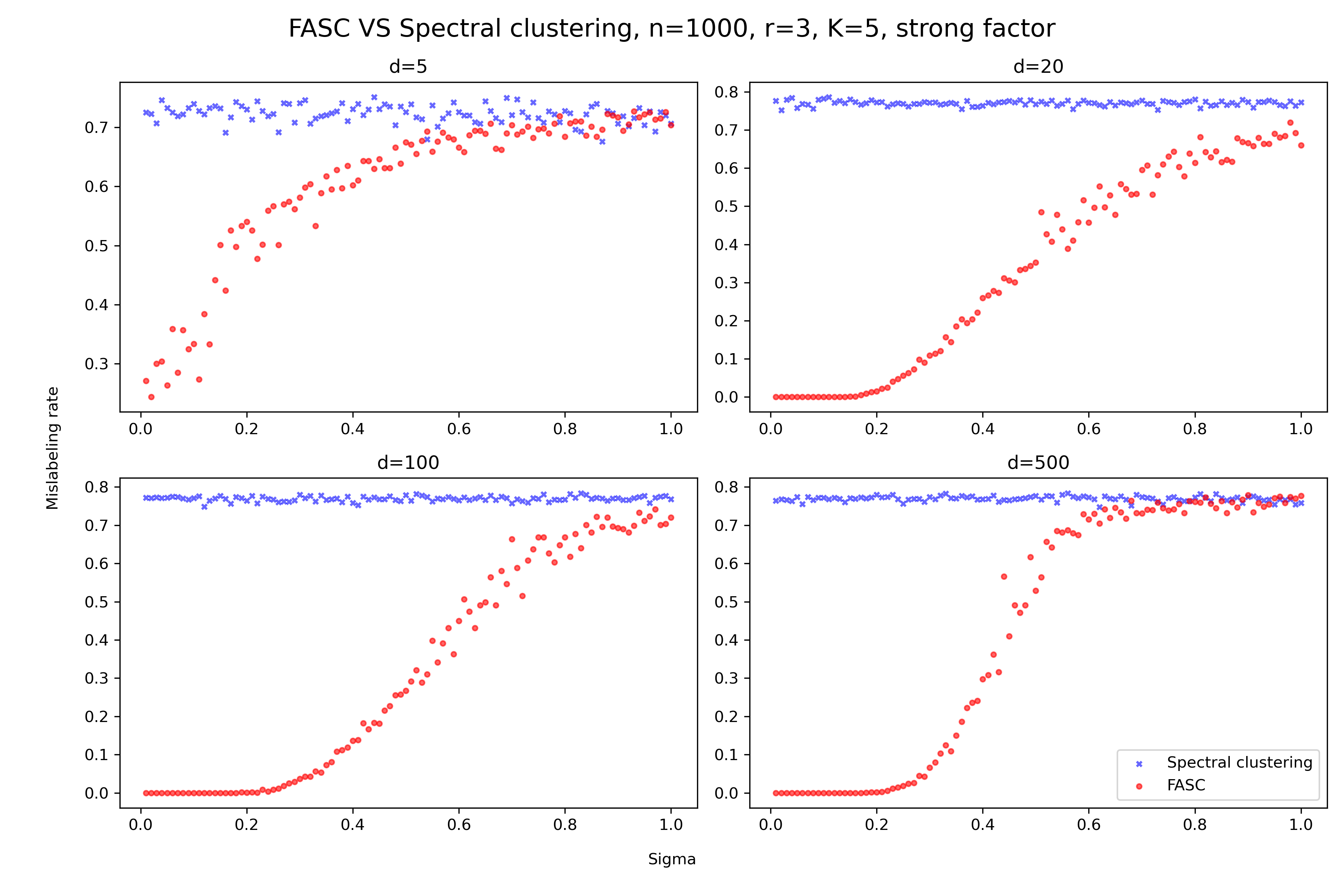}
    \caption{Mislabeling rates for FASC and vanilla spectral clustering with strong factors.}
    \label{fig:FASC VS Spectral clustering}
\end{figure}

\medskip

\begin{remark} We point out that for the cases $d=5$, FASC does not perform as well as it does when $d$ is larger. The approximate perpendicularity may not hold when $d$ is very small, and therefore the latent factors and its associated loading matrix can not be well estimated. This can be justified through further experiments: in the following experiments, we perform spectral clustering on $\bu_i$. In this case, spectral clustering should provide optimal mislabeling. We compare the results of FASC on $\bx_{i}$ and spectral clustering on $\bu_{i}$, which is the infeasible and ideal data after factor adjustments,  in \prettyref{fig:SC no factor} with $d=5,20,100,500$ respectively. We can see that when $d=5$, FASC does not perform as well as  the spectral clustering using the ideal data but still significantly outperform the case without factor adjustment as shown in \prettyref{fig:FASC VS Spectral clustering}. On the other hand, when $d=20,100,500$, the performance of FASC are comparable with the spectral method with ideal data. These phenomenons support our theory and show the effectiveness of \prettyref{algo:fasc} under relatively high-dimension scenarios.
\end{remark}
\begin{figure}
    \centering
    \includegraphics[width=\linewidth]{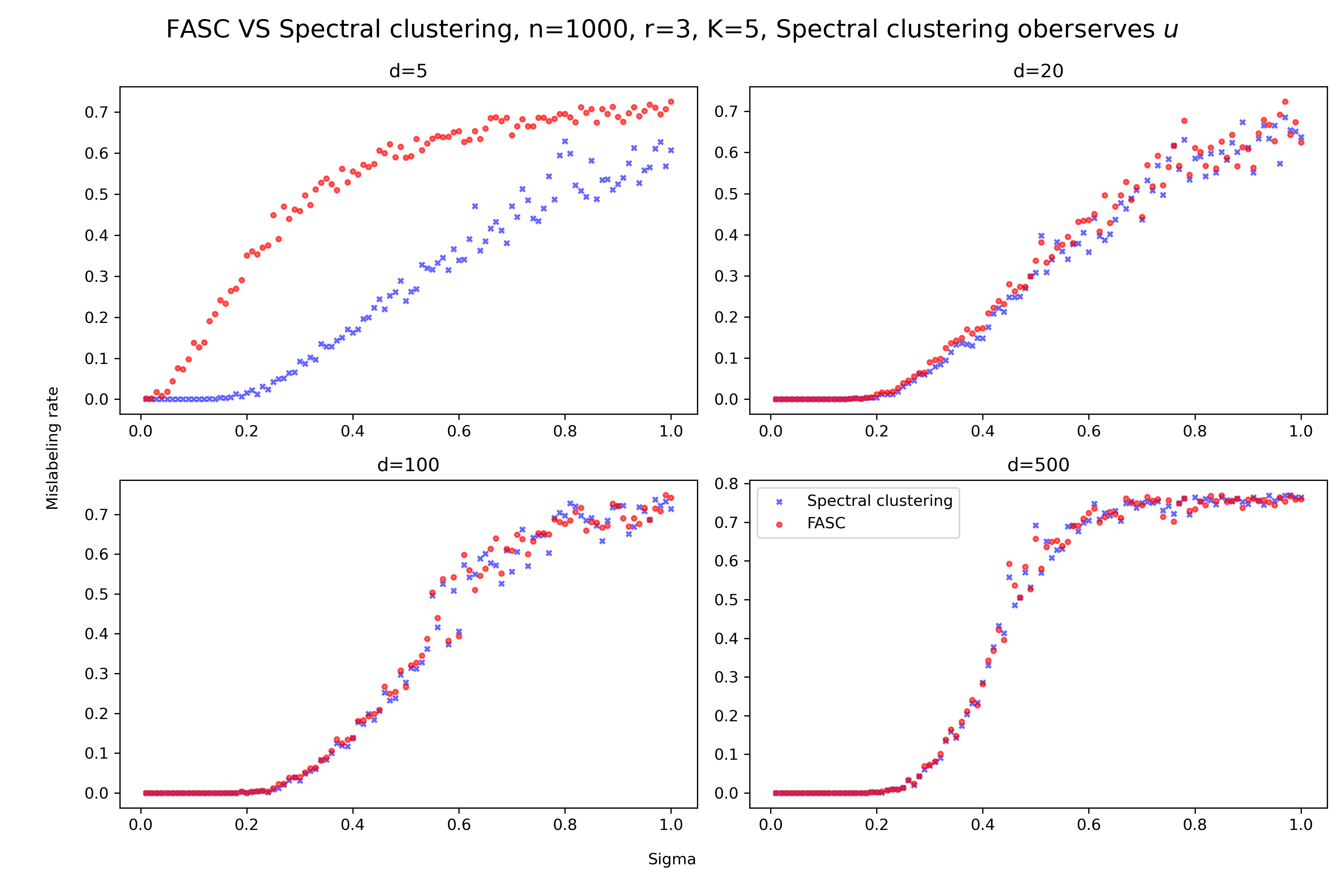}
    \caption{Mislabeling rates for FASC on $\boldsymbol{x}_i$ versus spectral clustering on $\boldsymbol{u}_i$, which is infeasible and ideal data after the factor adjustments.}
    \label{fig:SC no factor}
\end{figure}


\paragraph{Different number of factors in algorithm} 
In real datasets, the exact number $r$ of factors may be unknown. The following simulation study suggest that when applying \prettyref{algo:fasc}, choosing an $r$ slightly larger than the real number of factors can be helpful. In \prettyref{fig:FASC_with_different_r_model_d100_n1000_K5_r3}, the actual number of underlying factors is 3, and we apply FASC with $r=1,2,3,4,5$ respectively and plot the mislabeling rate as a function of $\sigma$. When the chosen number of factors in the algorithm is smaller than the actual number of factors (i.e., $r=1,2$), FASC performs poorly since the factor-adjusted data is still strongly dependent. On the other hand, when the chosen number of factors in the algorithm is larger than the actual number of factors (i.e., $r=4,5$), FASC provides much better performance.   It performs worse than the case with ideal $r=3$ due to the noise accumulation as elucidated in \cite{fan2008high}.
\begin{figure}
    \centering
    \includegraphics[width=1\textwidth, height=8cm, keepaspectratio]{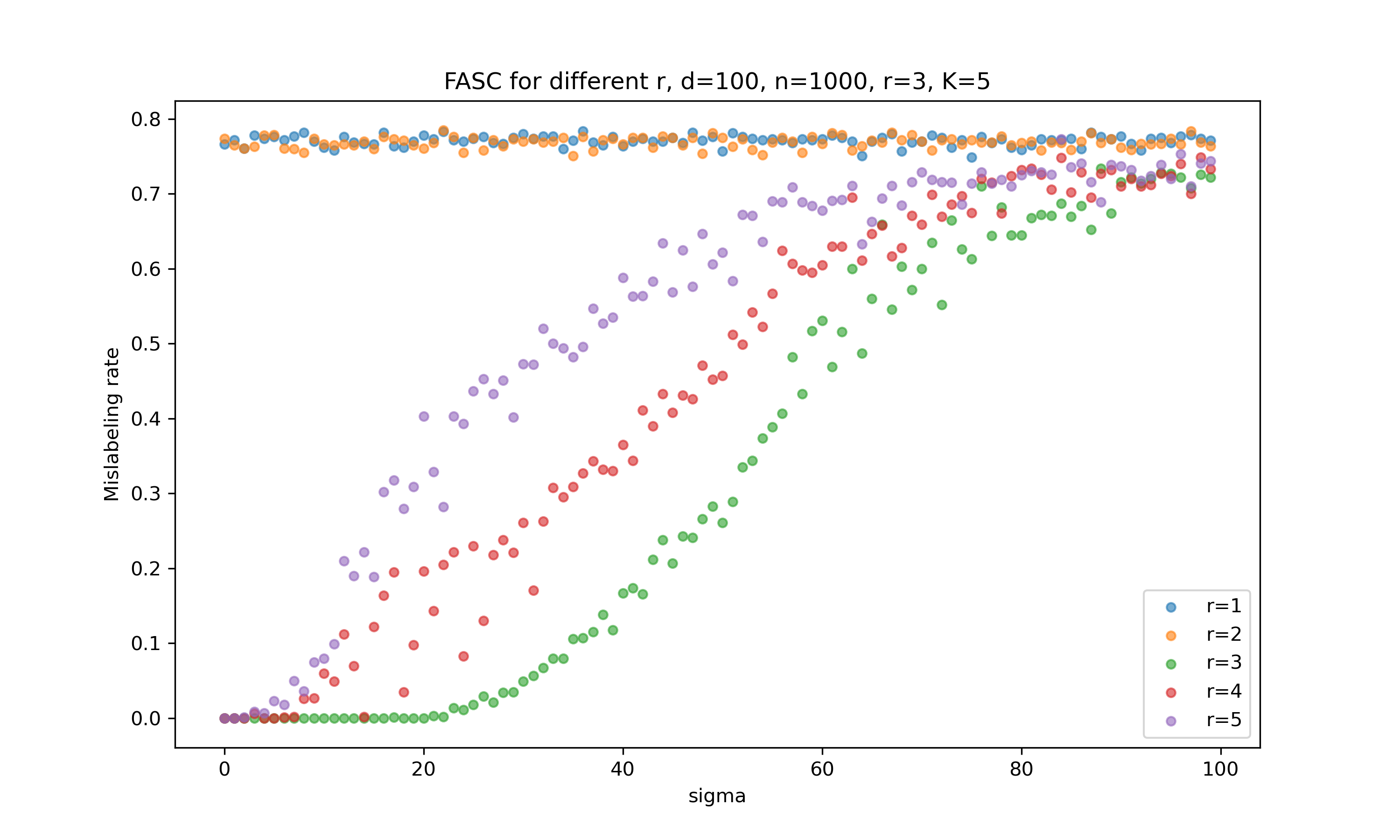}
    \caption{Mislabeling rates of FASC with a different number of factors.}
    \label{fig:FASC_with_different_r_model_d100_n1000_K5_r3}
\end{figure}

\subsubsection{High dimensional cases}
Some experiments are also provided for the scenario that $d$ is large compared to $n$ with the same data-generating process. We choose $n=100$, and $d=500,2000$ respectively. The results are shown in \prettyref{fig:FASC VS Spectral clustering, high dimension}, where FASC still performs much better than the vanilla spectral clustering. However, when $d=2000$, FASC can only achieve exact recovery when $\sigma \leq 0.1$, while in the previous section, e.g. \prettyref{fig:FASC VS Spectral clustering}, when $n=1000,d=100$, FASC can achieve exact recovery for some $\sigma \geq 0.2$. This phenomenon suggests that when $d$ is much larger than the sample size, the performance for FASC deteriorates. However, this deterioration is due to the nature of spectral clustering, which we will show by comparing the results with those of spectral clustering with infeasible ideal data. Similar to what we do in \prettyref{fig:SC no factor}, in the following experiments, we compare the results of FASC on $\bx_{i}$ and spectral clustering on $\bu_{i}$ in \prettyref{fig:FASC VS Spectral clustering on u, high dimension} with $n=100$ and $d=500,2000$ respectively. We can see that the performance of FASC are still comparable with the spectral method with ideal data, even under the scenario that $d$ is large compared to $n$, showing the effectiveness of FASC. 

\begin{figure}
    \centering
    \includegraphics[width=\linewidth]{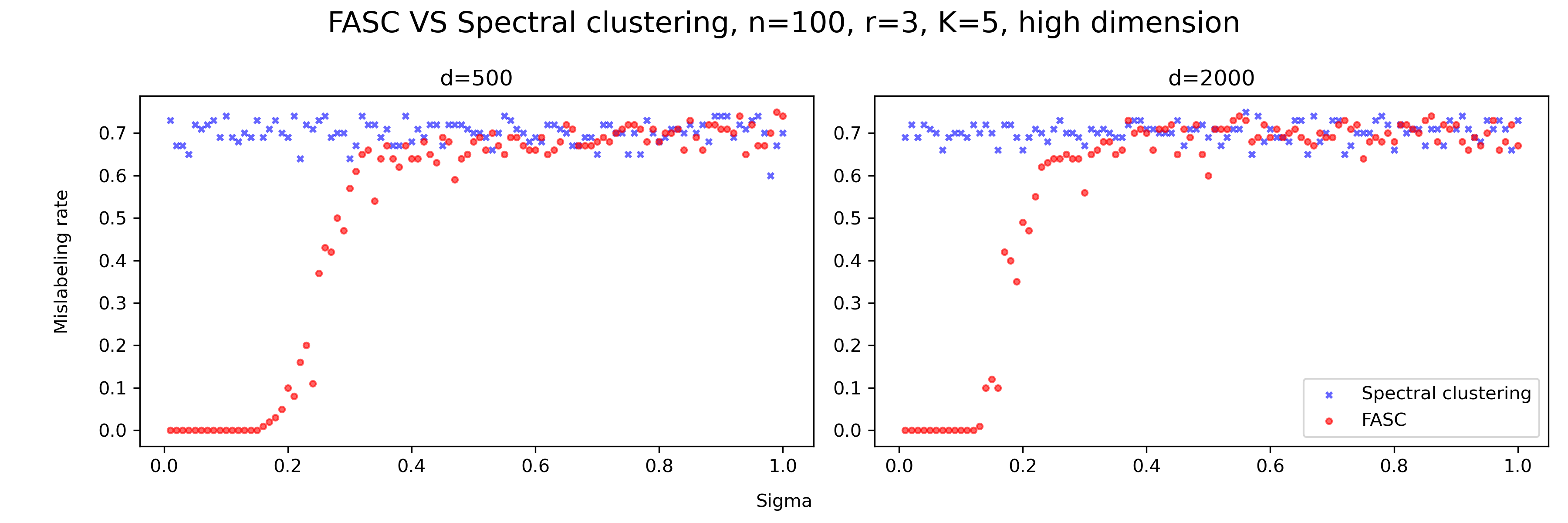}
    \caption{The mislabeling rates for FASC and vanilla spectral clustering in high dimension.}
    \label{fig:FASC VS Spectral clustering, high dimension}
\end{figure}

\begin{figure}
    \centering
    \includegraphics[width=\linewidth]{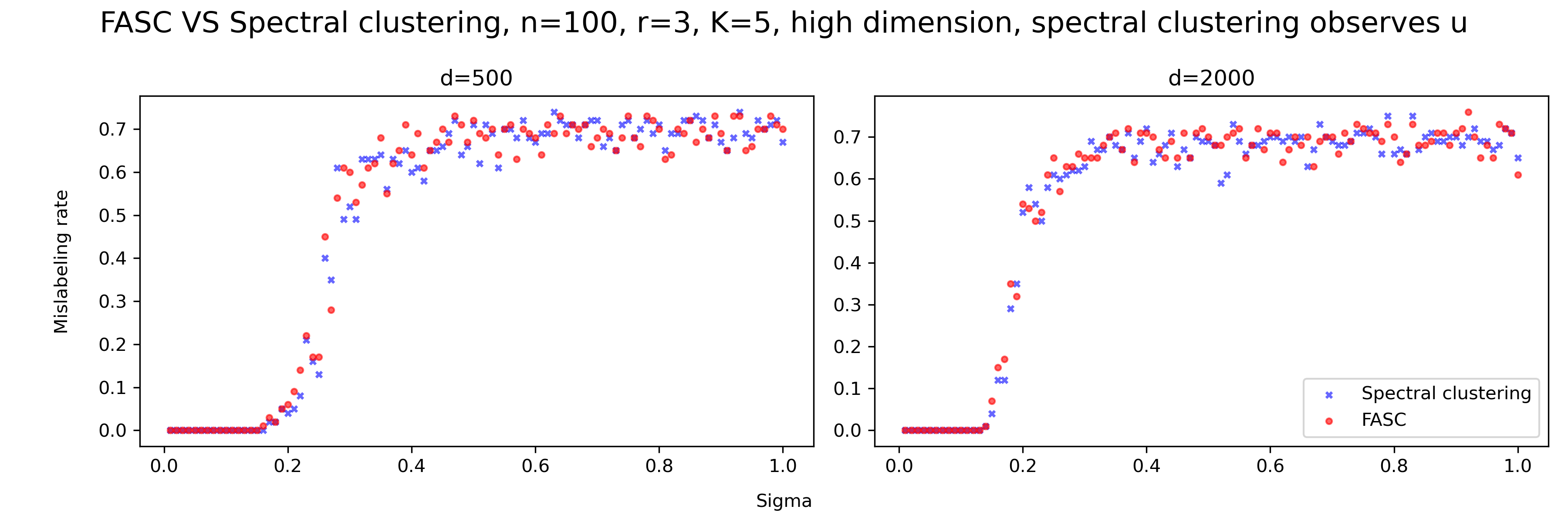}
    \caption{FASC on $\bx_i$ versus spectral clustering on $\bu_i$, in high dimension.}
    \label{fig:FASC VS Spectral clustering on u, high dimension}
\end{figure}

\subsection{Weak factor cases}
In this section, we will investigate the ``weak factor" cases. The data generating process remains the same, except that we generate the factor loading matrix $\bB\in \mathbb{R}^{d \times r}$ whose rows are drawn from \iid $\frac{1}{\sqrt{d}}\mathcal{N}(0,\bI_{r})$.
The simulation results are provided in \prettyref{fig:FASC VS Spectral clustering, weak factor} for $n=1000$, $d=100,500$. Similar to \prettyref{fig:FASC VS Spectral clustering}, FASC still performs well under the weak factor scenario, supporting our result in \prettyref{cor:weak}.

\begin{figure}
    \centering
    \includegraphics[width=\linewidth]{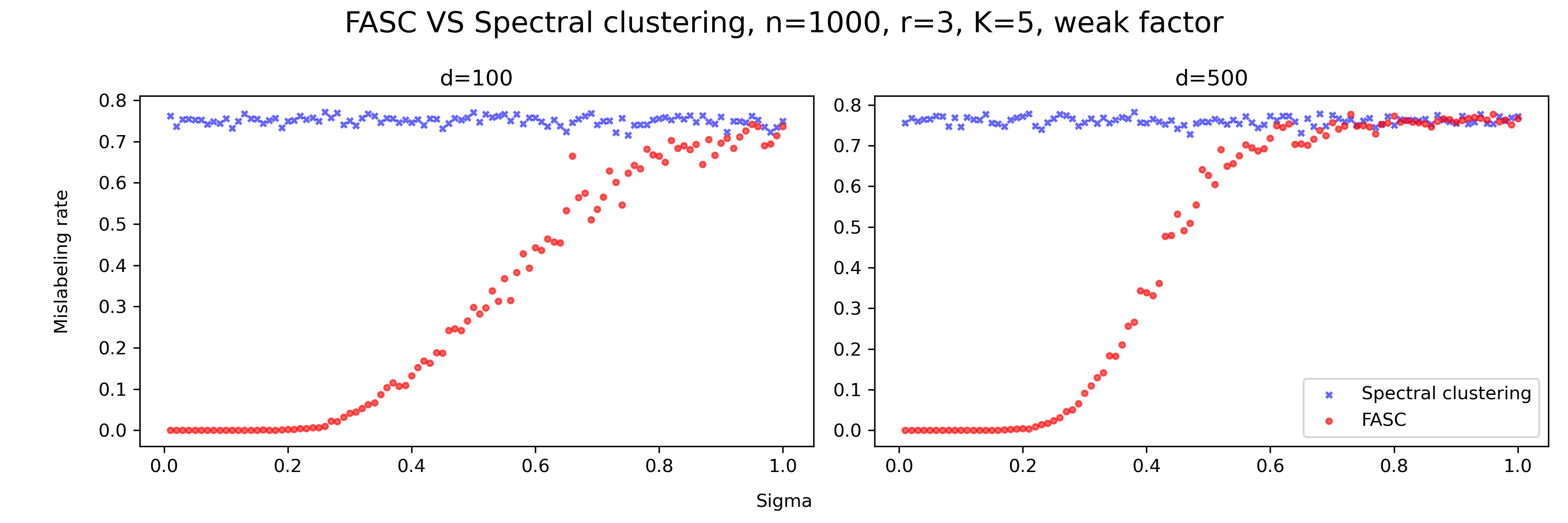}
    \caption{FASC versus vanilla spectral clustering for the case with weak factors.}
    \label{fig:FASC VS Spectral clustering, weak factor}
\end{figure}

\section{Real data analysis}
\label{sec:real_data}
\subsection{Mice protein expression}
We analyze the mice protein expression data from UCI Machine Learning Repository:\\ \url{https://archive.ics.uci.edu/ml/datasets/Mice+Protein+Expression}.\\
The dataset consists of the expression levels of 77 proteins measured in the cerebral cortex of 8 classes of control (which we view as the true class label), containing 1080 measurements per protein. Each measurement can be considered as an independent sample. We first cleaned the missing values by dropping 6 columns ``BAD\_N", ``BCL2\_N", ``pCFOS\_N", ``H3AcK18\_N", ``EGR1\_N", ``H3MeK4\_N" and then removed all rows with at least 1 missing value. After data cleaning and centralization, we get a dataset of 1047 samples each having 71 attributes. We then apply the $K$-means clustering (we use the standard kmeans function in R which runs the Hartigan-Wong algorithm \citep{hartigan1979algorithm}), spectral clustering and FASC to the unlabeled dataset with the number of clusters $K=8$. We compare the clustering results to the true class labels and calculate the mislabeling rate. We also compare them with the ``random guess" baseline, i.e. the case where we assign each data point randomly to one of the eight clusters and then compute the mislabeling error. The clustering results are stated in the second column of Table \ref{table1}, where FASC with $r=1$ or $2$ performs much better than $K$-means and spectral clustering. However, for larger $r$, the performance of FASC is not better than $K$-means or spectral clustering.
\begin{table}
  \centering
  \resizebox{\textwidth}{!}{%
  \begin{tabular}{p{5cm}|p{5cm}|p{5cm}}
    \hline
    \textbf{Clustering algorithm} & \textbf{Mislabeling rate for mice protein expression data} & \textbf{Mislabeling rate for DNA codon usage frequencies data} \\
    \hline
    $K$-means & 0.659 & 0.610  \\
    Spectral clustering & 0.657 & 0.608 \\
    \hline
    FASC($r=1$) & 0.538 & 0.507  \\
    FASC($r=2$) & 0.569 & 0.565 \\
    \hline
    FASC($r=3$) & 0.666 & 0.630 \\
    FASC($r=4$) & 0.645 & 0.700 \\
    \hline
    Baseline(random guess) & 0.875 & 0.875 \\
    \hline
  \end{tabular}%
  }
\caption{Misclassification rate for different methods on mice protein expression data and DNA codon usage frequencies data}
  \label{table1}
\end{table}
This phenomenon can be explained by the spectral structure of this dataset. We draw the scree plot \prettyref{fig:scree_plot_1} which shows the eigenvalues of the dataset arranged in descending order. It is obvious that the first two eigenvalues are significantly larger than others, suggesting that there may exist two significant factors.  Therefore it is appropriate to do factor adjustment on this dataset with the number of factors $r=1$ or $2$, justifying the result that FASC performs well with $r=1$ or $2$.

\begin{figure}
\makebox[\textwidth][c]{
\begin{subfigure}[b]{0.5\textwidth}
  \centering
  \includegraphics[width=1\textwidth, height=8cm, keepaspectratio]{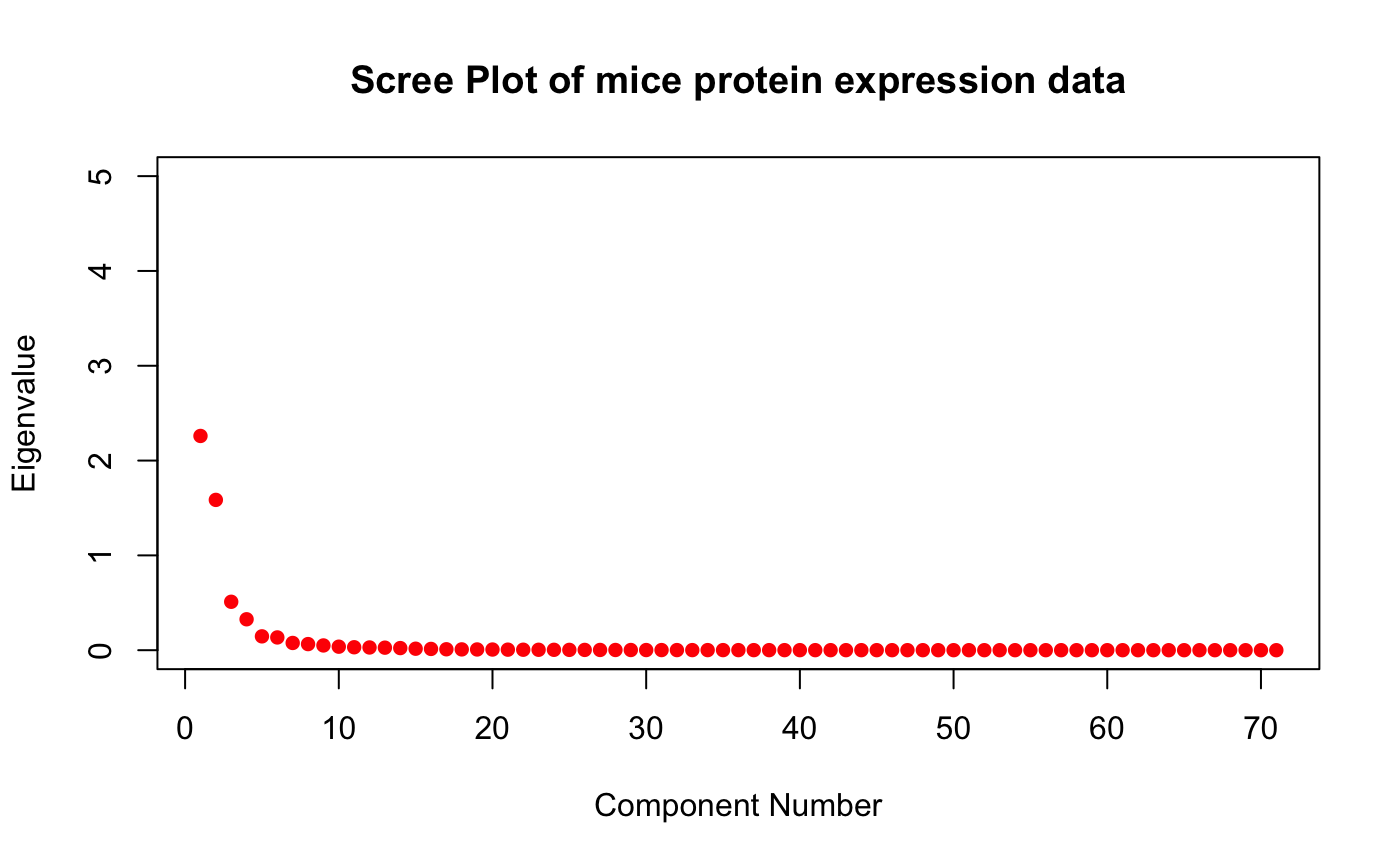}
  \caption{Scree plot of mice protein expression data}
  \label{fig:scree_plot_1}
\end{subfigure}
\hfill
\begin{subfigure}[b]{0.5\textwidth}
  \centering
  \includegraphics[width=1\textwidth, height=8cm, keepaspectratio]{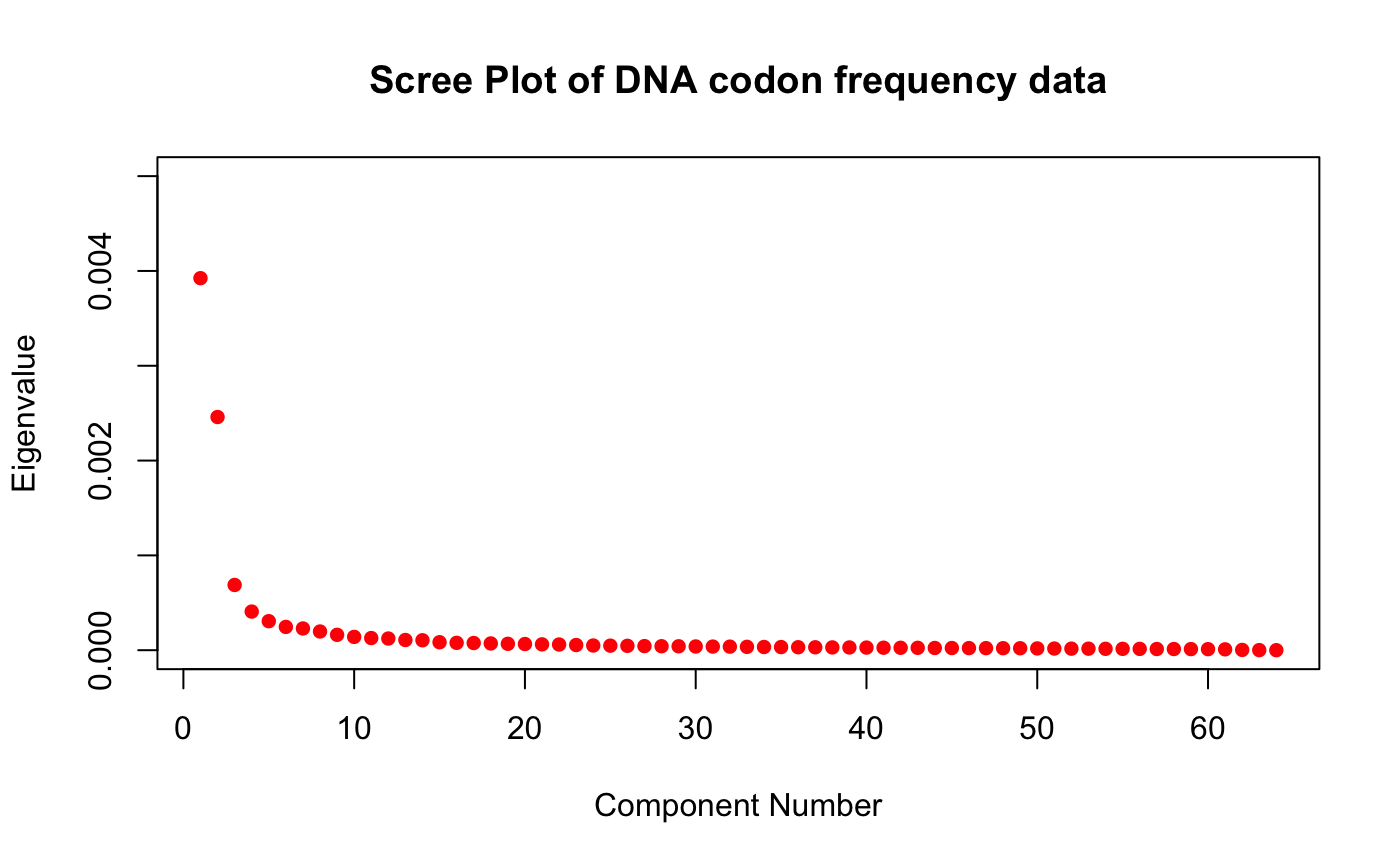}
  \caption{Scree plot of DNA codon frequency data}
  \label{fig:scree_plot_2}
\end{subfigure}
}
\caption{Scree plot of real data}
\label{scree_plot}
\end{figure}
\subsection{DNA codon usage frequencies}
We analyze the DNA codon usage frequencies data from UCI Machine Learning Repository:\\ \url{https://archive.ics.uci.edu/dataset/577/codon+usage}.\\
The dataset consists of codon usage frequencies in the genomic coding DNA of a large sample of diverse organisms from different taxa tabulated in the CUTG database. Each sample has 64 attributes that are the codon (including `UUU', `UUA', `UUG', `CUU', etc) frequency. The data are classified by their kingdom (which we view as the true class label): `bct'(bacteria), `pln' (plant), `inv' (invertebrate), `vrt' (vertebrate), `mam' (mammal), `rod' (rodent), `pri' (primate) and `vrl'(virus) (`arc',`phg',`plm' are removed since the number of samples is very small). After data cleaning, we get a dataset of 12135 samples with 64 attributes. We apply the $K$-means clustering, spectral clustering and FASC to the centralized unlabeled dataset with the number of clusters $K=8$. We compare the clustering results to the true class label and calculate the misclassification rate. The results of clustering are stated in the third column of Table \ref{table1}, where FASC with $r=1$ or $2$ performs much better than $K$-means and spectral clustering.
Similar to the mice protein expression analysis, this phenomenon can also be explained by the spectral structure. In the scree plot \prettyref{fig:scree_plot_2}, the first two eigenvalues are significantly larger than others. 

\section{Proof sketch of \prettyref{thm:general}}
\label{sec:proof_sketch}

Below we provide a brief argument to explain why the FASC algorithm can guarantee a consistent clustering. On a high level, we can summarize \prettyref{asmp:factors_genr} in the following way:
\begin{itemize}
	\item the covariance component corresponding to the factors is the most significant term for explaining the data variability $\cov(\bx_i)$.
	
	\item the factor components are approximately orthogonal to the idiosyncratic components,
\end{itemize}
Our proof shows that these assumptions are sufficient to guarantee that the idiosyncratic components can be extracted from the data. In view of \eqref{eq:ui-representation} we can conclude that the idiosyncratic components contains all the information regarding cluster memberships, so a consistent estimation of the idiosyncratic components should suffice to extract a consistent clustering. 
\paragraph{First phase (extracting the idiosyncratic components):}

To estimate $\bu_i$ we remove the component of $\bx_i$ that lies on the column space of $\bB$. We first estimate the matrix of projection on the column space of $\bB$. The population covariance matrix $\bSigma$ under model \eqref{eq:factor_cluster} is given by
\begin{align}\label{eq:fasc1}
	\bSigma:=\text{cov}(\bx_{i}) = \text{cov} (\bB\bff_{i}+\bu_{i})=\bB\bB^\sfT +  \bSigma_u,\quad
	\bSigma_u = \text{cov}(\bu_{i}).
\end{align} 
To extract the factor component, in view of the above decomposition we use the first $r$ principle components of the sample covariance matrix 
$$
\hat{\bSigma}:=\frac{1}{n}\bX^\sfT\bX = \frac{1}{n}\sum_{i=1}^{n} \bx_{i} \bx_{i}^\sfT.
$$ 
Let $\hat{\bV}_{r}$ denote the top $r$ eigenvectors of the eigen-decomposition of $\hat\bSigma$
$$\hat{\bV}_{r}:= (\hat{\bv}_{1},\cdots,\hat{\bv}_{r}),
\quad \hat{\bSigma}=\sum_{j=1}^{d}\hat{\lambda}_{j}\hat{\bv}_{j}\hat{\bv}_{j}^\sfT,
\quad \hat{\lambda}_{1} \geq \hat{\lambda}_{2} \geq \cdots \geq \hat{\lambda}_{d} \geq 0.$$
\prettyref{algo:fasc} uses the estimator of $\bu_{i}$ given by $\hat{\bu}_{i} = (\bI - \hat{\bV}_{r}\hat{\bV}_{r}^\sfT)\bx_{i}$. In the case where all the non-zero eigenvalues of $\bB\bB^\sfT$ are significantly larger than the operator norm of $\bSigma_\bu$ and $\bB\in \reals^{d\times r}$ has a full column rank $r$, the matrix $\hat \bV_r\hat \bV_r^\sfT$ should approximate the projection matrix onto the column space of $\bB$. Therefore $\hat{\bV}_{r}\hat{\bV}_{r}^\sfT\bx_{i}$ is approximately equal to $\bB\bff_{i}$, and hence, $\hat{\bu}_{i}$ is approximately equal to $\bu_i$. 

\paragraph{Second phase (spectral clustering on the estimated idiosyncratic components):}

To show that an application of \prettyref{algo:spectral} on $\{\hat{\bu}_{i}\}_{i=1}^{n}$ will give us a consistent clustering, we will use the following guarantee for spectral clustering with sub-Gaussian data.
\begin{lemma}{\cite[Theorem 3.1]{zhang2022leaveoneout}}
	\label{lmm:spectral_clustering_lemma}
	Consider the model 
	\begin{align}
		\bz_{i} = \btheta_{y_{i}} + \bepsilon_{i},\quad y_i\in [K], i\in [n]. 
	\end{align}
	Let $\hat{\by} := (\hat{y}_{1}, \cdots, \hat{y}_{n})$ be the output of \prettyref{algo:spectral} on $\{\bz_i\}_{i=1}^n$. Assume $\bepsilon_{i} \sim \subG_{d}(\sigma^{2})$ independently with zero mean for each $i \in [n]$. Let $\beta= \frac{K}{n} \min_{a \in [K]} |\{i : y_i = a\}|$ with $\beta n/K^2 >10$. There exist constants $\tilde C, \tilde C' > 0$ such that under the assumption that
	\begin{equation*}
		\psi:=\frac{\min_{i\neq j\in [K]}\|\btheta_i-\btheta_j\|}{\beta^{-0.5}K(1+\sqrt{\frac{d}{n}})\sigma}>\tilde C,\quad
		\rho:=\frac{\sigma_k([\btheta_{y_1},\dots,\btheta_{y_n}]^\sfT)}{(\sqrt{n}+\sqrt{d})\sigma}>\tilde C,
	\end{equation*}
	we have 
	\begin{equation*}
		\EE [\mathcal{M}(\hat{\by}, \by)] \leq \exp (-(1-\tilde C'(\psi^{-1}+\rho^{-2}))\frac{\min_{i\neq j\in [K]}\|\btheta_i-\btheta_j\|^2}{8\sigma^2}) + \exp(-\frac{n}{2}).
	\end{equation*}
\end{lemma}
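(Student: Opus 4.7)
The plan is to follow the standard template for establishing exponential mislabeling bounds for spectral clustering, combining a subspace perturbation analysis with a leave-one-out decoupling argument and a pointwise concentration bound for each data point's nearest-centroid decision. Write $\bZ=\bTheta+\bE$, where $\bTheta$ is the $n\times d$ matrix whose $i$-th row is $\btheta_{y_i}$ and $\bE$ has independent sub-Gaussian rows. Let $\bV\in\reals^{d\times k}$ denote the top-$k$ right singular vectors of $\bTheta$ (which span the centroid subspace), and $\hat\bV$ the analogous object from $\bZ$; the output of Algorithm~\ref{algo:spectral} is a near-optimal $k$-means clustering on the projected points $\{\hat\bV^\sfT\bz_i\}_{i=1}^n$.

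First I would carry out a Davis-Kahan step. Sub-Gaussian concentration gives $\|\bE\|\lesssim(\sqrt n+\sqrt d)\sigma$ with probability at least $1-e^{-n/2}$, and the $\rho$-condition forces $\sigma_k(\bTheta)\geq \tilde C(\sqrt n+\sqrt d)\sigma$; Wedin's theorem then yields $\|\hat\bV\hat\bV^\sfT-\bV\bV^\sfT\|\lesssim \rho^{-1}$, which is already enough to keep the projected centroids $\hat\bV^\sfT\btheta_a$, $\hat\bV^\sfT\btheta_b$ well separated for $a\neq b$ as soon as $\rho$ is large enough. This global perturbation bound, however, is too coarse to yield sharp per-point tail bounds.

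The second, and most delicate, step is the leave-one-out construction: for each $i$ define $\bZ^{(-i)}$ by replacing the $i$-th row of $\bZ$ with $\btheta_{y_i}$, and let $\hat\bV^{(-i)}$ be its top-$k$ right singular vectors. A standard perturbation argument, iterating between a spectral bound and a row-wise bound, shows that the column spaces of $\hat\bV$ and $\hat\bV^{(-i)}$ differ by $O\bigl(\beta^{-1/2}K(1+\sqrt{d/n})\sigma/\sigma_k(\bTheta)\bigr)$, while $\hat\bV^{(-i)}$ is independent of $\bepsilon_i$. I expect this leave-one-out step to be the main obstacle: sharpness of the exponent hinges on quantifying the discrepancy between $\hat\bV$ and $\hat\bV^{(-i)}$ without losing constants, which typically requires a careful bootstrapping that uses both the Davis-Kahan bound and the $(1+\epsilon)$-approximate $k$-means guarantee as inputs.

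To close the argument, I would combine the leave-one-out independence with $k$-means consistency. Using the oracle inequality for the $(1+\epsilon)$-approximate $k$-means, one shows that after a permutation matching estimated centroids to true ones, $\hat\bmu_{\hat y_i}$ differs from $\hat\bV^{(-i)\sfT}\btheta_{y_i}$ by $o(\min_{a\neq b}\|\btheta_a-\btheta_b\|)$. The event $\{\hat y_i\neq y_i\}$ then forces some $a\neq y_i$ with
\[
\|\hat\bV^{(-i)\sfT}\bz_i-\hat\bV^{(-i)\sfT}\btheta_a\|\leq \|\hat\bV^{(-i)\sfT}\bz_i-\hat\bV^{(-i)\sfT}\btheta_{y_i}\|+o(\|\btheta_a-\btheta_{y_i}\|).
\]
Expanding the squares and using the independence of $\bepsilon_i$ from $\hat\bV^{(-i)}$, the resulting linear functional of $\bepsilon_i$ is sub-Gaussian with variance at most $\sigma^2\|\btheta_a-\btheta_{y_i}\|^2$, so a Gaussian-type tail bound gives a per-$i$ failure probability at most $\exp\bigl(-(1-\tilde C'(\psi^{-1}+\rho^{-2}))\|\btheta_a-\btheta_{y_i}\|^2/(8\sigma^2)\bigr)$. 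Averaging over $i$, taking a union bound over $a\in[K]$, and adding the $e^{-n/2}$ mass on which the spectral event fails yields the stated bound on $\EE[\mathcal{M}(\hat\by,\by)]$.
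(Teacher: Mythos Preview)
The paper does not give its own proof of this lemma: it is quoted verbatim as \cite[Theorem 3.1]{zhang2022leaveoneout} and used as a black box in the proof of \prettyref{thm:general}. The only additional observation the paper makes (in a remark in \prettyref{app:proof_details}) is that the result extends from exact $K$-means to $(1+\epsilon)$-approximate $K$-means with only constant-factor changes, by noting that the oracle inequality $\|\hat\Theta-\hat P\|_F\le\|P-\hat P\|_F$ in \cite[Proposition 3.1]{zhang2022leaveoneout} becomes $\|\hat\Theta-\hat P\|_F\le(1+\epsilon)\|P-\hat P\|_F$.

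Your outline is a faithful high-level summary of the argument in \cite{zhang2022leaveoneout}: Davis--Kahan/Wedin for the global subspace perturbation under the $\rho$-condition, a leave-one-out construction to decouple $\hat\bV$ from $\bepsilon_i$, and then a pointwise sub-Gaussian tail bound on the linear functional that decides mislabeling. So there is nothing to compare against in this paper, and your sketch matches the cited source's strategy.
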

The related proof details are provided in \prettyref{app:proof_details}.

\section{Conclusion}

We study the problem of clustering high-dimensional data generated from a mixture of strongly correlated distributions. We propose FASC, which decorrelates the dependence among variables and reduces the noise through factor adjustments. Theoretical analysis of FASC is provided, showing FASC has a misclassification rate of $e^{-\Omega(\snr)}$ with near linear sample complexity $n = \tilde{\Omega}(d)$. We also show the applicability of FASC with real data experiments and numerical studies. The experimental results suggest that FASC achieves performance improvement compared with traditional methods, such as spectral clustering and $K$-means.

\bibliographystyle{agsm}
\bibliography{texfiles/fasc_references}

\newpage
\bigskip
\begin{center}
{\large\bf SUPPLEMENTARY MATERIAL to Factor Adjusted Spectral Clustering for Mixture Models}
\end{center}

\appendix

\section{Proof details of \prettyref{thm:general}}
\label{app:proof_details}

The proof of \prettyref{thm:general} will rely on the following major results. 
\begin{lemma} \label{lmm:bound_on_Delta_norm}
	Let $n \geq d \log^3 n$. Denote $\bDelta:=\bV_{r}\bV_{r}^{\sfT} -\hat{\bV}_{r}\hat{\bV}_{r}^{\sfT}$. Then there are constants $\xi_1>0,n_0\in \naturals^+$ such that for all $n\geq n_0$
	\begin{equation*}
		\PP\qth{\| \bDelta \|
		\leq  \frac{\xi_1}{\sigma_{\max}(\bB)} \pth{\sigma \vee \frac{1}{\sqrt{\log n}}}}\geq 1-n^{-10}.
	\end{equation*}
\end{lemma}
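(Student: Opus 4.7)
The plan is to apply a Davis--Kahan $\sin\Theta$ bound for the $r$-dimensional leading invariant subspace of $\bSigma:=\cov(\bx_i)$, which reduces the task to two sub-problems: (a) lower-bounding the spectral gap $\lambda_r(\bSigma)-\lambda_{r+1}(\bSigma)$, and (b) controlling $\|\hat\bSigma-\bSigma\|$ with probability at least $1-n^{-10}$. For (a), since $\bff_i$ is independent of $\bu_i$ and the model is centered, $\bSigma=\bB\bB^\sfT+\bSigma_\bu$ with $\bSigma_\bu=\EE[\bmu_{y_i}\bmu_{y_i}^\sfT]+\EE[\bepsilon_i\bepsilon_i^\sfT]$, so $\|\bSigma_\bu\|\leq\|\EE[\bmu_{y_i}\bmu_{y_i}^\sfT]\|+\sigma^2$. \prettyref{asmp:factors_genr}\ref{genr_factor1} then gives $\sigma_{\min}^2(\bB)\geq 3\|\bSigma_\bu\|$, and Weyl's inequality yields $\lambda_r(\bSigma)-\lambda_{r+1}(\bSigma)\geq\sigma_{\min}^2(\bB)-2\|\bSigma_\bu\|\geq\tfrac{1}{3}\sigma_{\min}^2(\bB)$. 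The $\sin\Theta$ theorem then yields $\|\bDelta\|\lesssim\|\hat\bSigma-\bSigma\|/\sigma_{\min}^2(\bB)$.

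For (b), I would use the decomposition
\[
\hat\bSigma-\bSigma \;=\; \bB(\hat\bF-\bI_r)\bB^\sfT \;+\; (\hat\bSigma_\bu-\bSigma_\bu) \;+\; \bB\hat\bC+\hat\bC^\sfT\bB^\sfT,
\]
with $\hat\bF:=n^{-1}\sum_{i=n+1}^{2n}\bff_i\bff_i^\sfT$, $\hat\bSigma_\bu:=n^{-1}\sum_{i=n+1}^{2n}\bu_i\bu_i^\sfT$, and $\hat\bC:=n^{-1}\sum_{i=n+1}^{2n}\bff_i\bu_i^\sfT$. Because $r=O(1)$, $\bu_i\in\subG_d(O(1))$ (since $\|\bmu_{y_i}\|\lesssim 1$ under \prettyref{asmp:regularity} and $\bepsilon_i\in\subG_d(\sigma^2)$ with $\sigma\lesssim 1$), and $\bff_i$ is independent of $\bu_i$ with $\EE[\bu_i]=0$, standard sub-Gaussian covariance concentration for $\hat\bF-\bI_r$ and $\hat\bSigma_\bu-\bSigma_\bu$, together with a discretization or matrix-Bernstein argument for the rectangular cross-moment $\hat\bC$, give, each with probability at least $1-n^{-11}$,
\[
\|\hat\bF-\bI_r\|\lesssim\sqrt{\tfrac{\log n}{n}},\qquad \|\hat\bSigma_\bu-\bSigma_\bu\|\lesssim\sqrt{\tfrac{d}{n}},\qquad \|\hat\bC\|\lesssim\sqrt{\tfrac{d}{n}}.
\]
Using $\sigma_{\max}(\bB)\geq 1$ from \prettyref{asmp:factors_genr}\ref{genr_factor1} and the triangle inequality, $\|\hat\bSigma-\bSigma\|\lesssim\sigma_{\max}^2(\bB)\sqrt{\log n/n}+\sigma_{\max}(\bB)\sqrt{d/n}$.

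Combining (a) and (b) and multiplying through by $\sigma_{\max}(\bB)$,
\[
\sigma_{\max}(\bB)\|\bDelta\|\;\lesssim\;\frac{\sigma_{\max}^2(\bB)\cdot\sigma_{\max}(\bB)}{\sigma_{\min}^2(\bB)}\sqrt{\tfrac{\log n}{n}}+\frac{\sigma_{\max}^2(\bB)\sqrt{d}}{\sigma_{\min}^2(\bB)\sqrt{n}}.
\]
Now \prettyref{asmp:factors_genr}\ref{genr_factor2} bounds both $\frac{\sigma_{\max}^2(\bB)\,\sigma_{\max}(\bB)}{\sigma_{\min}^2(\bB)}$ and $\frac{\sigma_{\max}^2(\bB)\sqrt{d}}{\sigma_{\min}^2(\bB)}$ by $\gamma_1(\sigma\vee 1/\sqrt{\log n})\sqrt{n/\log n}$; substituting yields $\sigma_{\max}(\bB)\|\bDelta\|\lesssim(\sigma\vee 1/\sqrt{\log n})$, which is the claim after dividing by $\sigma_{\max}(\bB)$. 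A union bound over the three concentration events together with the condition $n\geq d\log^3 n$ gives the stated probability $1-n^{-10}$ for all $n\geq n_0$.

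The main obstacle I expect is obtaining the \emph{structure-aware} bound in (b). A naive application of sample-covariance concentration to $\bx_i$ directly (whose sub-Gaussian parameter scales as $\sigma_{\max}^2(\bB)$) would only yield $\|\hat\bSigma-\bSigma\|\lesssim\sigma_{\max}^2(\bB)\sqrt{d/n}$; passing this through Davis--Kahan leaves a residual $\sqrt{d/\log n}$ factor that \prettyref{asmp:factors_genr}\ref{genr_factor2} cannot absorb. The key refinements are (i) separating the factor piece so that the improved rate $\|\hat\bF-\bI_r\|\lesssim\sqrt{\log n/n}$ is used (exploiting $r=O(1)$) in place of $\sqrt{d/n}$, and (ii) ensuring the cross term scales as $\sigma_{\max}(\bB)\sqrt{d/n}$ rather than $\sigma_{\max}^2(\bB)\sqrt{d/n}$ by applying concentration to $\hat\bC$ before multiplication by $\bB$.
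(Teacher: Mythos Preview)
Your proposal is correct and follows essentially the same route as the paper: Davis--Kahan applied to $\bSigma$ versus $\hat\bSigma$, the spectral gap lower-bounded via Weyl and \prettyref{asmp:factors_genr}\ref{genr_factor1}, and the perturbation $\|\hat\bSigma-\bSigma\|$ controlled by the very decomposition $\bB(\hat\bF-\bI_r)\bB^\sfT+\bB\hat\bC+\hat\bC^\sfT\bB^\sfT+(\hat\bSigma_\bu-\bSigma_\bu)$ you wrote down (this is precisely the content of the paper's \prettyref{lmm:matrix_perturbation_lemma}). The only cosmetic differences are that the paper bounds each piece with a truncated matrix-Bernstein inequality and therefore carries an extra $\sqrt{\log n}$ in the $\sqrt{d/n}$ terms, and that the paper leaves the final substitution of \prettyref{asmp:factors_genr}\ref{genr_factor2} implicit whereas you spell it out; neither changes the argument.
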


\begin{lemma} \label{lmm:results_on_mean_matrices}
	There are constant $\xi_2,\xi_3>0$ and $\xi_4\in (0,1)$ such that the following holds:
	\begin{enumerate}[label=(\roman*)]
		\item $\|(\bI-\bV_{r}\bV_{r}^{\sfT})\bB\| \leq \xi_2\pth{\sigma\vee \frac{1}{\sqrt{\log n}}}$
		\item $\lambda_{k} ((\bI-\bV_{r}\bV_{r}^{\sfT})\mathbb{E}[\bmu_{y_{i}}\bmu_{y_{i}}^{\sfT}](\bI-\bV_{r}\bV_{r}^{\sfT})) \geq \xi_3$
	 \item $\| (\bI-\bV_{r}\bV_{r}^{\sfT})(\bmu_{j_1} -\bmu_{j_2}) \| \geq \xi_4 \|\bmu_{j_1} -\bmu_{j_2}\|$ for any $j_{1}\neq j_{2}$.
	\end{enumerate}
\end{lemma}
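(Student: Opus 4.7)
All three claims flow from a single fact: under \prettyref{asmp:factors_genr}, the population top-$r$ eigenspace $\bV_r$ of $\bSigma = \bB\bB^\sfT + \bSigma_u$ is essentially aligned with the column space of $\bB$ (spanned by $\bU$), and hence nearly orthogonal to the centroid span (spanned by $\bM$). Writing $\bSigma_u = \bM\tilde\bLambda\bM^\sfT + \bSigma_\epsilon$ with $\bSigma_\epsilon := \cov(\bepsilon_i)$, part (a) of that assumption forces $\sigma_{\min}(\bB)^2 \geq 3(\|\tilde\bLambda\|+\sigma^2) \geq 3\|\bSigma_u\|$, so Weyl's inequality yields $\lambda_r(\bSigma) \geq \sigma_{\min}(\bB)^2$ and an eigengap $\lambda_r(\bSigma) - \lambda_{r+1}(\bSigma) \geq 2\|\bSigma_u\|$. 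I first derive (i) directly from the eigen-equation, and then deduce (ii) and (iii) from the resulting control on $\|\bM^\sfT\bV_r\|$.

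\textbf{Part (i) via a self-bounding identity.} Starting from $\bSigma\bV_r = \bV_r\bLambda_r$ and left-multiplying by $\bI - \bU\bU^\sfT$ annihilates the $\bB\bB^\sfT$ contribution, producing
\begin{equation*}
(\bI - \bU\bU^\sfT)\bV_r\bLambda_r = -(\bI - \bU\bU^\sfT)\bM\tilde\bLambda\bM^\sfT\bV_r - (\bI - \bU\bU^\sfT)\bSigma_\epsilon\bV_r.
\end{equation*}
Let $\delta := \|(\bI-\bU\bU^\sfT)\bV_r\|$; the splitting $\bV_r = \bU\bU^\sfT\bV_r + (\bI - \bU\bU^\sfT)\bV_r$ gives the elementary estimate $\|\bM^\sfT\bV_r\| \leq \|\bU^\sfT\bM\| + \delta$. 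Taking operator norms and substituting yields the self-bounding inequality
\begin{equation*}
\delta\,\lambda_r(\bSigma) \leq \|\tilde\bLambda\|\,(\|\bU^\sfT\bM\| + \delta) + \sigma^2,
\end{equation*}
which rearranges, using $\lambda_r(\bSigma) - \|\tilde\bLambda\| \geq \tfrac{2}{3}\sigma_{\min}(\bB)^2$, to $\delta \lesssim (\|\bU^\sfT\bM\|+\sigma^2)/\sigma_{\min}(\bB)^2$. Since $\bB = \bU\bU^\sfT\bB$, the identity $(\bI - \bV_r\bV_r^\sfT)\bB = (\bI - \bV_r\bV_r^\sfT)\bU \cdot \bU^\sfT\bB$ together with the sin-$\Theta$ equivalence $\|(\bI - \bV_r\bV_r^\sfT)\bU\| = \|(\bI - \bU\bU^\sfT)\bV_r\| = \delta$ delivers
\begin{equation*}
\|(\bI - \bV_r\bV_r^\sfT)\bB\| \leq \delta\,\sigma_{\max}(\bB) \lesssim \frac{\sigma_{\max}(\bB)(\|\bU^\sfT\bM\|+\sigma^2)}{\sigma_{\min}(\bB)^2},
\end{equation*}
which part (c) of the assumption bounds by $\xi_2(\sigma \vee 1/\sqrt{\log n})$. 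In the isotropic case $\bSigma_\epsilon = \sigma^2\bI_d$, the term $-\sigma^2(\bI - \bU\bU^\sfT)\bV_r$ combines on the left as $(\bI - \bU\bU^\sfT)\bV_r(\bLambda_r + \sigma^2\bI_r)$, eliminating the $\sigma^2$ in the numerator and recovering the weakened form of the perpendicularity condition.

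\textbf{Parts (ii) and (iii).} Put $\bN := (\bI - \bV_r\bV_r^\sfT)\bM$, so $\bN^\sfT\bN = \bI_k - \bM^\sfT\bV_r\bV_r^\sfT\bM$ and $\lambda_{\min}(\bN^\sfT\bN) = 1 - \|\bM^\sfT\bV_r\|^2$. From part (i) the quantity $\delta$ is $o(1)$ and $\|\bU^\sfT\bM\| \leq \gamma_2$ for a sufficiently small constant (as guaranteed by the specialised assumptions in the subsequent corollaries), so $\|\bM^\sfT\bV_r\| \leq \gamma_2 + \delta \leq \eta$ eventually, for some constant $\eta<1$. For (ii), $(\bI - \bV_r\bV_r^\sfT)\EE[\bmu_{y_i}\bmu_{y_i}^\sfT](\bI - \bV_r\bV_r^\sfT) = \bN\tilde\bLambda\bN^\sfT$ whose $k$-th eigenvalue equals $\min_{\|\bx\|=1}\|\bN\tilde\bLambda^{1/2}\bx\|^2 \geq \lambda_k(\tilde\bLambda)\,\lambda_{\min}(\bN^\sfT\bN) \geq c_3(1-\eta^2) =: \xi_3$. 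For (iii), each centroid lies in the image of $\EE[\bmu_{y_i}\bmu_{y_i}^\sfT]$, hence $\bmu_j = \bM\bM^\sfT\bmu_j$; expanding gives $(\bI - \bV_r\bV_r^\sfT)(\bmu_{j_1}-\bmu_{j_2}) = \bN\,\bM^\sfT(\bmu_{j_1}-\bmu_{j_2})$ and $\|(\bI-\bV_r\bV_r^\sfT)(\bmu_{j_1}-\bmu_{j_2})\|^2 \geq (1-\eta^2)\|\bmu_{j_1}-\bmu_{j_2}\|^2$, yielding $\xi_4 := \sqrt{1-\eta^2}$.

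\textbf{The main obstacle} is the self-bounding inequality for $\delta$ in (i): a crude application of $\|(\bI - \bU\bU^\sfT)\bSigma_u\| \leq \|\bSigma_u\|$ followed by vanilla Davis--Kahan delivers only a constant bound for $\delta$, which is too weak. One must instead retain $\|\bM^\sfT\bV_r\|$ on the right-hand side and loop it back into itself through the splitting identity, simultaneously exploiting both the approximate perpendicularity $\|\bU^\sfT\bM\|\ll 1$ and the ratio $\sigma_{\max}(\bB)/\sigma_{\min}(\bB)^2$ controlled by part (c) of the assumption. The parts (ii) and (iii) are then direct consequences once the smallness of $\|\bM^\sfT\bV_r\|$ is in hand.
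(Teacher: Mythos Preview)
Your argument is correct (modulo two harmless sign slips: after left-multiplying the eigen-equation by $\bI-\bU\bU^\sfT$ the right-hand side carries plus signs, not minus; and in the isotropic case the combination is $(\bI-\bU\bU^\sfT)\bV_r(\bLambda_r-\sigma^2\bI_r)$, not $+\sigma^2\bI_r$). Neither affects the norm bounds.

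Your route to the key estimate $\delta=\|(\bI-\bU\bU^\sfT)\bV_r\|\lesssim(\|\bU^\sfT\bM\|+\sigma^2)/\sigma_{\min}(\bB)^2$ is genuinely different from the paper's. The paper constructs an auxiliary matrix $\tilde\bSigma=\bU\bLambda\bU^\sfT+(\bI-\bU\bU^\sfT)\bM\tilde\bLambda\bM^\sfT(\bI-\bU\bU^\sfT)$ whose top-$r$ eigenspace is \emph{exactly} $\bU$, and then applies Davis--Kahan between $\bSigma$ and $\tilde\bSigma$; the point is that $\|\bSigma-\tilde\bSigma\|$ involves only the cross-terms $\bU\bU^\sfT\bM\tilde\bLambda\bM^\sfT$ and $\bSigma_\epsilon$, which are small. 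You instead work directly with the eigen-equation and close a self-bounding inequality for $\delta$ by feeding $\|\bM^\sfT\bV_r\|\leq\|\bU^\sfT\bM\|+\delta$ back into itself. Both arguments land on the same bound; yours is arguably more transparent about why naive Davis--Kahan (with perturbation $\bSigma_u$) is insufficient, and it handles the isotropic refinement in one line. For (ii) and (iii) your Gram-matrix approach via $\bN=(\bI-\bV_r\bV_r^\sfT)\bM$ and $\lambda_{\min}(\bN^\sfT\bN)=1-\|\bM^\sfT\bV_r\|^2$ is cleaner than the paper's repeated Weyl-plus-expansion; the paper instead bounds $\|\bU\bU^\sfT(\bmu_{j_1}-\bmu_{j_2})\|$ and $\|(\bI-\bU\bU^\sfT)\EE[\bmu_{y_i}\bmu_{y_i}^\sfT](\bI-\bU\bU^\sfT)-(\bI-\bV_r\bV_r^\sfT)\EE[\bmu_{y_i}\bmu_{y_i}^\sfT](\bI-\bV_r\bV_r^\sfT)\|$ term by term.
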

Now we get back to proving \prettyref{thm:general}. Note that the final clustering is obtained when we apply \prettyref{algo:spectral} to $\{\hat\bu_i\}_{i=1}^n$. We will show that the estimates $\{\hat\bu_i\}_{i=1}^n$ will be distributed according to a mixture of approximately sub-Gaussian distribution and the centroids of the mixture components will be well separated. To show the above, we note the following decomposition
\begin{align}
	\hat\bu_i=\tilde\bmu_{y_i}+\tilde \bepsilon_i,\quad 
	\tilde{\bmu}_{j}:= (\bI-\hat{\bV}_{r}\hat{\bV}_{r}^{\sfT})\bmu_{j},\quad 
	\tilde{\bepsilon}_{i} := (\bI-\hat{\bV}_{r}\hat{\bV}_{r}^{\sfT}) \bB\bff_{i} + (\bI-\hat{\bV}_{r}\hat{\bV}_{r}^{\sfT}) \bepsilon_{i}.
\end{align}
Next, we establish the following guarantees with a probability at least $1-n^{-10}$
\begin{enumerate}[label=(P\arabic*)]
	\item \label{claim:P1} 
	$\min_{i\neq j\in [K]}\|\tilde\bmu_i-\tilde\bmu_j\|\geq \frac {\xi_4}8\min_{i\neq j\in [K]} \|\bmu_i-\bmu_j\|$ 
	\item \label{claim:P2} $\tilde\bepsilon_i\in \subG_d\pth{\xi_5\pth{\sigma^2\vee\frac 1{\log n}}}$ for a constant $\xi_5>0$.
\end{enumerate}
We first prove the claim \ref{claim:P1}.  We have for $j_{1} \neq j_{2}$, 
\begin{align}\label{eq:m5}
	\|\tilde{\bmu}_{j_{1}} - \tilde{\bmu}_{j_2}\| &= \| (\bI-\hat{\bV}_{r}\hat{\bV}_{r}^{\sfT})(\bmu_{j_1} -\bmu_{j_2}) \| \nonumber\\
	& \geq \| (\bI-\bV_{r}\bV_{r}^{\sfT})(\bmu_{j_1} -\bmu_{j_2}) \| - \|\bDelta (\bmu_{j_1} -\bmu_{j_2})\| \nonumber\\
	&\stepa{\geq} \xi_4 \|\bmu_{j_1} -\bmu_{j_2}\| - \|\bDelta\| \| \bmu_{j_1} -\bmu_{j_2}\| \nonumber\\
	& \stepb{\geq} \xi_4 \|\bmu_{j_1} -\bmu_{j_2}\| - \frac{\xi_1}{\sigma_{\max}(\bB)} \pth{\sigma \vee \frac{1}{\sqrt{\log n}}} \| \bmu_{j_1} -\bmu_{j_2}\|
	\stepc{\geq} \frac{\xi_4}{2} \|\bmu_{j_1} -\bmu_{j_2}\|.
\end{align}
where (a) follows from \prettyref{lmm:results_on_mean_matrices}, (b) follows from \prettyref{lmm:bound_on_Delta_norm}, and (c) follows from \prettyref{asmp:factors_genr}.

Next we will prove \ref{claim:P2}. We note the following decomposition for $\tilde{\bepsilon}_{i}$
\begin{align*}
	\tilde{\bepsilon}_{i} &= (\bI-\hat{\bV}_{r}\hat{\bV}_{r}^{\sfT}) \bB\bff_{i} + (\bI-\hat{\bV}_{r}\hat{\bV}_{r}^{\sfT}) \bepsilon_{i} = (\bI-\bV_{r}\bV_{r}^{\sfT}) \bB\bff_{i} + \bDelta \bB \bff_{i}+ (\bI-\hat{\bV}_{r}\hat{\bV}_{r}^{\sfT}) \bepsilon_{i} .
\end{align*}
Hence, conditioned on $\hat \bV_r,\bB$, and using the independence of $\bff_i$ and $\bepsilon_i$ we have that 
\begin{align}\label{eq:m1}
	\tilde\bepsilon_i
	&\in \subG_d((\|(\bI-\bV_{r}\bV_{r}^{\sfT}) \bB + \bDelta \bB\|)^{2} + \sigma^{2})
	\subseteq \subG_d(2\|(\bI-\bV_{r}\bV_{r}^{\sfT}) \bB\|^{2} +  2\|\bDelta \bB\|^{2} + \sigma^{2}).
\end{align} 
For a large constant $\xi_5>0$ we note that the bound $\|(\bI-\bV_{r}\bV_{r}^{\sfT}) \bB\|^{2} +  \|\bDelta \bB\|^{2} + \sigma^{2}\leq \xi_5\pth{\sigma^2\vee\frac 1{\log n}}$ holds with a probability at least $1-2n^{-10}$ as
\begin{itemize}
\item $\bff_{i} \in \subG_{r}(c_1),\bepsilon_{i} \in \subG_{d}(\sigma^2)$
\item Using $\|\bI-\hat\bV_r\hat\bV_r^\sfT\|\leq 1$, \prettyref{lmm:bound_on_Delta_norm}, \prettyref{lmm:results_on_mean_matrices} we get with a probability $1-2n^{-10}$
$$\|(\bI-\bV_{r}\bV_{r}^{\sfT}) \bB\|^{2}\leq \xi_2^2\pth{\sigma^2\vee \frac 1{{\log n}}},\quad
\|\bDelta \bB\|^{2} \leq \|\bDelta\|^2\cdot \sigma_{\max}^2(\bB)\leq \xi_1^2\pth{\sigma^2\vee \frac 1{\log n}}.$$
\end{itemize}
In view of the above and \eqref{eq:m1} we get that 
\begin{align}\label{eq:approx_subG}
	\tilde \bepsilon_i\in \subG_d\pth{\xi_5\pth{\sigma^2\vee \frac 1{\log n}}} \text{with a probability at least } 1-2n^{-10}.
\end{align}

Now we are ready to apply \prettyref{lmm:spectral_clustering_lemma} with the vectors $\{\hat \bu_i\}_{i=1}^n$. From now on our argument will be conditional on the values of $\hat\bV_r$ over the event $\sth{\hat \bV_r: \|\bDelta\|\leq \frac{\xi_1}{\sigma_{\max}(\bB)} \pth{\sigma \vee \frac{1}{\sqrt{\log n}}}}$ for the constant $\xi_1>0$ as in \prettyref{lmm:bound_on_Delta_norm}, and note that the above event holds with a probability $1-n^{-10}$. We will show that given any such $\hat\bV_r$ the final result will be the same, and hence the result without any conditioning on $\hat\bV_r$ will also hold.

In view of \eqref{eq:approx_subG} it remains to check whether the conditions on $\beta,\psi,\rho$ required by \prettyref{lmm:spectral_clustering_lemma} are satisfied. In our setup of \eqref{eq:factor_cluster} note that using a Chernoff type argument \cite[Section 2.2]{boucheron2003concentration} it can be shown that $\abs{\min_{a\in [K]}\{a:y_i=j\}}>\tilde c n$ with a probability $1-K{n^{-10}}$ for a small constant $\tilde c>0$. The above implies 
\begin{align}\label{eq:m2}
	\tilde c< \beta = \frac{K}{n} \min_{a \in [K]} |\{i : y_i = a\}| \leq 1 \quad  \text{ for all large } n.
\end{align} 
Next we check whether the condition on $\psi$ is satisfied. In view of \ref{claim:P1}, \eqref{eq:m2}, \prettyref{asmp:regularity} and $K<\tilde C$ for some constant $\tilde C>0$ we get that there is a large constant $\tilde C_1>0$ such that for sufficiently large $n$ and small enough $\sigma$
\begin{align}\label{eq:m3}
	\psi:=\frac{\min_{i\neq j\in [K]}\|\tilde\bmu_i-\tilde\bmu_j\|}{\beta^{-0.5}K(1+\sqrt{\frac{d}{n}})\sigma}> \tilde C_1.
\end{align}
Next we check the condition on $\rho$ in \prettyref{lmm:spectral_clustering_lemma}. For observations $\{\hat \bu_i\}_{i=1}^n$, $\rho$ is defined as
\begin{equation*} \rho:=\frac{\sigma_k([\tilde\bmu_{1},\dots,\tilde\bmu_n]^\sfT)}{(\sqrt{n}+\sqrt{d})\sigma}.
\end{equation*}
Let $\hat{\bM}$ denote the matrix $[\tilde\bmu_{1},\dots,\tilde\bmu_n]^\sfT$. To this end we note the following result
\begin{lemma} \label{lmm:eigenvalue_concentration}
	Conditioned on $\hat \bV_r$, there are constants $\tilde C_2,\tilde C_3, \tilde C_4>0$ such that for all large enough $n$, with probability $1-\tilde C_2/n$,
	 $$\lambda_{k}(\frac1n\sum_{i=1}^{n}\tilde{\bmu}_{y_i} \tilde{\bmu}_{y_i}^{\sfT}) \geq \lambda_{k}(\EE[\tilde{\bmu}_{y_i}\tilde{\bmu}_{y_i}^{\sfT}]) - \tilde C_3\frac{\log n}{\sqrt{n}}\geq \tilde C_4.$$
\end{lemma}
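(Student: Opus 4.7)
The plan is to decompose the argument into a deterministic lower bound on the population eigenvalue followed by an operator-norm concentration of the empirical average; Weyl's inequality will then bridge the two. Throughout, I would condition on $\hat\bV_r$ lying in the high-probability event of \prettyref{lmm:bound_on_Delta_norm}, so that $\hat P := \bI - \hat\bV_r\hat\bV_r^\sfT$ is deterministic and the vectors $\tilde\bmu_1,\dots,\tilde\bmu_K$ are fixed.

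\textbf{Step 1: population lower bound.} Let $M := \EE[\bmu_{y_i}\bmu_{y_i}^\sfT]$ and $P := \bI - \bV_r\bV_r^\sfT$, so that $\EE[\tilde\bmu_{y_i}\tilde\bmu_{y_i}^\sfT] = \hat P M \hat P$. Writing $\hat P = P + \bDelta$ (with the sign convention of \prettyref{lmm:bound_on_Delta_norm}), the identity $\hat P M \hat P - P M P = \bDelta M \hat P + P M \bDelta$ and the bound $\|M\| \leq c_2$ from \prettyref{asmp:regularity}\ref{reg2} give $\|\hat P M\hat P - PMP\| \leq 2c_2\|\bDelta\|$. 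Because $\sigma_{\max}(\bB)\geq 1$ by \prettyref{asmp:factors_genr}\ref{genr_factor1}, \prettyref{lmm:bound_on_Delta_norm} forces $\|\bDelta\| = o(1)$ on our conditioning event. Combined with $\lambda_k(PMP) \geq \xi_3$ from \prettyref{lmm:results_on_mean_matrices}(ii) and Weyl's inequality, this produces $\lambda_k(\hat P M \hat P) \geq \xi_3 - 2c_2\|\bDelta\| \geq \xi_3/2$ for all $n$ large enough.

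\textbf{Step 2: empirical concentration.} With $\hat p_j := n^{-1}|\{i:y_i=j\}|$, we have the exact decomposition
$$\frac{1}{n}\sum_{i=1}^n \tilde\bmu_{y_i}\tilde\bmu_{y_i}^\sfT - \EE[\tilde\bmu_{y_i}\tilde\bmu_{y_i}^\sfT] = \sum_{j=1}^K (\hat p_j - p_j)\,\tilde\bmu_j\tilde\bmu_j^\sfT.$$
From $\sum_j p_j\bmu_j = 0$ and $\max_{j_1\neq j_2}\|\bmu_{j_1}-\bmu_{j_2}\|\leq c_5$ (\prettyref{asmp:regularity}\ref{reg3}) we get $\|\bmu_j\|\leq \sum_l p_l\|\bmu_j-\bmu_l\|\leq c_5$, and hence $\|\tilde\bmu_j\|\leq c_5$. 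Hoeffding's inequality applied to each $\hat p_j$ together with a union bound over $j\in [K]$ yields $\max_j|\hat p_j - p_j| \leq C\sqrt{\log n/n}$ with probability at least $1 - \tilde C_2/n$. The triangle inequality then bounds the operator norm of the difference above by $K\cdot C\sqrt{\log n/n}\cdot c_5^2 \leq \tilde C_3\,\log n/\sqrt{n}$.

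\textbf{Step 3: conclusion.} Applying Weyl's inequality to this operator-norm estimate gives the first half of the claimed inequality, and Step 1 supplies the second half with $\tilde C_4 := \xi_3/2 - o(1) > 0$ for large $n$. The main (minor) obstacle is bookkeeping: keeping the conditioning event of \prettyref{lmm:bound_on_Delta_norm} and the Hoeffding event jointly on a set of probability at least $1 - \tilde C_2/n$, and verifying that the rate $\log n/\sqrt{n}$ absorbs both the sampling fluctuation and the residual $o(1)$ perturbation from Step 1 into a single constant $\tilde C_4$ uniformly in $n$.
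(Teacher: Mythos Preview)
Your proposal is correct and follows essentially the same route as the paper: both arguments use the decomposition $\frac{1}{n}\sum_i \tilde\bmu_{y_i}\tilde\bmu_{y_i}^\sfT - \EE[\tilde\bmu_{y_i}\tilde\bmu_{y_i}^\sfT] = \sum_j(\hat p_j - p_j)\tilde\bmu_j\tilde\bmu_j^\sfT$ together with binomial concentration and Weyl's inequality for Step~2, and for Step~1 both compare $\hat P M\hat P$ to $PMP$ via the perturbation $\bDelta$ and invoke \prettyref{lmm:results_on_mean_matrices}(ii). The only cosmetic differences are that the paper handles the two steps in the opposite order and expands the perturbation term slightly less compactly than your identity $\hat P M\hat P - PMP = \bDelta M\hat P + PM\bDelta$.
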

A proof of the above result is provided later. In view of the above we have 
\begin{align*}
	\frac{1}{\sqrt{n}}\sigma_{k}(\hat{\bM}) 
	= \sqrt{\frac1n \lambda_{k}(\hat{\bM}^{\sfT}\hat{\bM})}
	= \sqrt{\lambda_{k}(\frac1n\sum_{i=1}^{n}\tilde{\bmu}_{y_i} \tilde{\bmu}_{y_i}^{\sfT})}
	 \geq \sqrt{\lambda_{k}(\EE\qth{\tilde{\bmu}_{y_i}\tilde{\bmu}_{y_i}^{\sfT}}) -\tilde C_3\frac{\log n}{\sqrt{n}} }
	 > \tilde C_5.
\end{align*}
for some constant $\tilde C_5>0$ when $\sigma$ is small enough, with probability $1-\tilde C_2/n$. In view of the last display we get for a constant $\tilde C_6>0$, whenever $\sigma$ is small
\begin{align}\label{eq:m4}
	\rho=\frac{\sigma_k([\tilde\bmu_{1},\dots,\tilde\bmu_n]^\sfT)}{(\sqrt{n}+\sqrt{d})\sigma}
	\geq \frac {\tilde C_5}\sigma>\tilde C_6.
\end{align}
In view of \eqref{eq:m3}, \eqref{eq:m4}, and the claims \ref{claim:P1},\ref{claim:P2} we apply \prettyref{lmm:spectral_clustering_lemma} to get
\begin{equation*}
	\EE \qth{\mathcal{M}(\hat{\by}, \by)} \leq \exp \pth{-(1-C'(\psi^{-1}+\rho^{-2}))\frac{\min_{i\neq j\in [K]}\|\tilde\bmu_i-\tilde\bmu_j\|^2}{8\pth{\sigma^2\vee\frac 1{\log n}}}} + \exp(-\frac{n}{2}).
\end{equation*}
In view of \eqref{eq:m5}, \eqref{eq:m3} and \eqref{eq:m4} we get that there exists a constant $\tilde C_7>0$ for which
\begin{equation*}
	\EE \qth{\mathcal{M}(\hat{\by}, \by)} \leq \exp \pth{-{\tilde C_7\cdot  {\overline{\snr}^2}\over 1\vee \frac 1{\sigma\log n}}} + \exp(-\frac{n}{2}),\quad 
	\overline{\snr} := \frac{\min_{i\neq j}\|\bmu_{i}-\bmu_{j}\|^{2}}{\sigma^2}.
\end{equation*}
Notice that all the above results holds are conditioned on a fixed $\bDelta$ that satisfies \prettyref{lmm:bound_on_Delta_norm}. For each of those $\bDelta$, the constants remain the same. Therefore the above results hold simlutaneously for any $\bDelta$ that satisfies \prettyref{lmm:bound_on_Delta_norm}. Therefore with probability greater than $1-n^{-10}$, for all sufficiently large $n$, the above results will hold without conditioning on $\bDelta$. This concludes the proof of \prettyref{thm:general} as $\sigma<1$.

%
%

\begin{remark}
Although \cite[Theorem 3.1 and Corollary 3.1]{zhang2022leaveoneout} hold for spectral clustering with exact $K$-means, the results will actually hold for $(1+\epsilon)$-approximate K-means with only differences in constants. To prove this we use an argument similar to the proof of \cite[Theorem 2.2]{loffler2021optimality}. Notice that the proof of \cite[Proposition 3.1]{zhang2022leaveoneout} claims that 
\begin{equation*}
    \|\hat{\Theta}-\hat{P}\|_{F} \leq \|P-\hat{P}\|_{F}
\end{equation*}
since $\hat{\Theta}$ is the solution to the exact $K$-means. However if $\hat{\Theta}$ is the solution to $(1+\epsilon)$-approximate $K$-means, we can similarly get 
\begin{equation*}
    \|\hat{\Theta}-\hat{P}\|_{F} \leq (1+\epsilon) \|P-\hat{P}\|_{F}.
\end{equation*}
Using this inequality will not affect the analysis except for constant factors in the result statement in \prettyref{thm:general}.  
\end{remark}

\section{Technical results}

\label{app:technical_details}

\begin{lemma}\label{lmm:matrix_perturbation_lemma}
Consider the model defined in (\ref{eq:factor_cluster}). Let $\bSigma := \EE\qth{\bx_{i}\bx_{i}^{\sfT}}$ be the population covariance matrix, and $\hat{\bSigma}:= \frac{1}{n}\sum_{i=1}^{n} \bx_{i} \bx_{i}^{\sfT}$ be the empirical covariance matrix and let $n \geq d$. Then there are constants $n_0,\bar C_1,\bar C_2>0$ such that for all $n\geq n_0$ we have
\begin{equation*}
    \|\hat{\bSigma}- \bSigma\| \leq \bar C_1\pth{\lambda_{1}^{*} \sqrt{\frac{\log n}{n}} + \sqrt{\frac{\lambda_{1}^{*}d \log n}{n}} + \sqrt{\frac{d \log n}{n}} + \frac{d \log^2 n}{n}}
\end{equation*}
with probability greater than $1-\bar {C}_2 n^{-9}$. Here $\lambda_{1}^{*} = \lambda_{\max}(\bB \bB^{\sfT})=\sigma_{\max}(\bB)^2$.
\end{lemma}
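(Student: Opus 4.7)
The plan is to decompose $\bx_i = \bB \bff_i + \bu_i$ with $\bu_i = \bmu_{y_i} + \bepsilon_i$, and use independence of $\bff_i$ from $(y_i, \bepsilon_i)$ together with $\EE[\bff_i]=0$ and the centering $\EE[\bu_i] = \sum_j p_j\bmu_j = 0$ to write $\bSigma = \bB \bB^\sfT + \EE[\bu_i \bu_i^\sfT]$ and hence
\begin{align*}
\hat\bSigma - \bSigma
&= \bB\pth{\tfrac1n\sum_{i=1}^n \bff_i \bff_i^\sfT - \bI_r}\bB^\sfT
+ \pth{\tfrac1n\sum_{i=1}^n \bu_i \bu_i^\sfT - \EE[\bu_i\bu_i^\sfT]} \\
&\quad + \tfrac1n\sum_{i=1}^n \bB \bff_i \bu_i^\sfT
+ \tfrac1n\sum_{i=1}^n \bu_i \bff_i^\sfT \bB^\sfT.
\end{align*}
I would then bound each of the four summands separately and union-bound. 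Note that under \prettyref{asmp:regularity} and \prettyref{asmp:model}(c), the vectors $\bu_i$ are sub-Gaussian with parameter $O(1)$: indeed $\|\bmu_{y_i}\|$ is uniformly bounded by a constant and $\bepsilon_i\in\subG_d(\sigma^2)$ with $\sigma<c'<1$.

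For the factor term I apply the spectral-norm bound $\|\bB(\tfrac1n\sum \bff_i\bff_i^\sfT - \bI_r)\bB^\sfT\| \leq \|\bB\|^2\,\|\tfrac1n\sum \bff_i\bff_i^\sfT - \bI_r\|$. Since $\bff_i \in \subG_r(c_1^2)$ with $r=O(1)$, the standard sub-Gaussian covariance concentration yields the $\lambda_1^*\sqrt{\log n/n}$ term. For the idiosyncratic term, the sub-Gaussian sample covariance bound applied to $\{\bu_i\}$ gives $\sqrt{d\log n/n} + d\log n /n$, and refining via truncation at the event $\{\max_i\|\bu_i\|\lesssim \sqrt{d\log n}\}$ (a sub-Gaussian tail event of probability $\geq 1-n^{-10}$) plus Bernstein on the truncated matrices produces the residual $d\log^2 n/n$ tail term stated in the lemma.

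The cross terms are handled by matrix Bernstein applied to the independent mean-zero matrices $Z_i := \bB \bff_i \bu_i^\sfT$. Using independence of $\bff_i$ and $\bu_i$ and $\EE[\bff_i\bff_i^\sfT]=\bI_r$, the variance proxies are
$$\|\EE[Z_i Z_i^\sfT]\| = \|\bB\bB^\sfT\|\,\EE\|\bu_i\|^2 \lesssim \lambda_1^* d, \qquad \|\EE[Z_i^\sfT Z_i]\| = \|\EE[\bu_i\bu_i^\sfT]\|\cdot\|\bB\|_F^2 \lesssim \lambda_1^*,$$
since $\|\bB\|_F^2 \leq r\lambda_1^*$ with $r=O(1)$. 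Truncating at $\|Z_i\|\leq \sqrt{\lambda_1^*}\,\|\bff_i\|\,\|\bu_i\| \lesssim \sqrt{\lambda_1^* d}\,\polylog n$ (again a sub-Gaussian event of probability $\geq 1-n^{-10}$) and invoking matrix Bernstein on the truncated summands contributes the $\sqrt{\lambda_1^* d\log n/n}$ term plus a lower-order piece absorbed into $d\log^2 n/n$.

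The main obstacle I expect is the bookkeeping around truncation: the summands $\bu_i\bu_i^\sfT$ and $\bB\bff_i\bu_i^\sfT$ are only sub-exponential (products of sub-Gaussians), so to achieve a clean high-probability bound at level $n^{-9}$ one must carefully (i) show that on the complement of the sub-Gaussian tail event the contribution is negligible, and (ii) verify that replacing $Z_i$ by its truncated version changes the expectation by a term that is dominated by the stated rate. Once this technical step is executed, collecting the four bounds via a union bound and choosing constants delivers the claim with probability at least $1-\bar C_2 n^{-9}$.
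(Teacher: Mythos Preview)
Your proposal is correct and follows essentially the same route as the paper: the paper uses exactly the decomposition $\bx_i=\bB\bff_i+\bu_i$, bounds the three pieces $\|\bB\|^2\|\tfrac1n\sum\bff_i\bff_i^\sfT-\bI_r\|$, $2\|\bB\|\|\tfrac1n\sum\bff_i\bu_i^\sfT\|$, and $\|\tfrac1n\sum\bu_i\bu_i^\sfT-\EE[\bu_i\bu_i^\sfT]\|$ separately, and for each applies a truncated matrix Bernstein inequality with the same truncation levels and variance-proxy calculations you describe. The only cosmetic difference is that the paper pulls $\|\bB\|$ outside and applies Bernstein to $\bff_i\bu_i^\sfT$, whereas you keep $\bB$ inside $Z_i$; both yield the $\sqrt{\lambda_1^* d\log n/n}$ cross term.
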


\begin{proof}
	We will need the following result, which is a truncated version of matrix Bernstein inequality:
	\begin{lemma}{\cite[Corollary 3.2]{MAL-079}}\label{lmm:matrix_Bernstein}
		Let $\bZ, \bZ_1, \cdots, \bZ_n$ be i.i.d. random matrices with dimension $m_1\times m_2$. Suppose the followings hold: 
		\begin{equation}\label{Bernstein_condition}
			\PP(\|\bZ - \EE[\bZ]\| \geq L) \leq q_0 \quad \mbox{and} \quad  \| \EE[\bZ]-\EE[ \bZ \11 \{ \|\bZ\| <L  \}   ]   \|  \leq q_1 .
		\end{equation}
		Define
  $$V= \max \{ \|\EE \qth{(\bZ - \EE \bZ )(\bZ - \EE \bZ )^{\sfT}}\|^{1/2}, \|\EE\qth{(\bZ - \EE \bZ )^{\sfT} (\bZ - \EE \bZ )}\|^{1/2} \}.$$ Set $m:=\max\{m_1,m_2\}$. Then for any $a \geq 2$, with probability at least $1-2m^{-a+1}-nq_0$, 
		\begin{align*}
			\|\frac{1}{n}\sum_{i=1}^{n} ( \bZ_i - \EE \bZ_i )\| \leq V \sqrt{\frac{2a \log m}{n}} + L  \frac{2a \log m}{3 n}  + q_1,    
		\end{align*}
		where $m=m_1+m_2$.
	\end{lemma}
	Back to the proof of \prettyref{lmm:matrix_perturbation_lemma}, we have the following decomposition: 
	\begin{align} \label{matrix_perturbation_ineq1}
		\|\hat{\bSigma}- \bSigma\| &= \|\frac{1}{n}\sum_{i=1}^{n}\pth{\bx_i\bx_i^{\sfT} - \EE \qth{\bx_i\bx_i^{\sfT}}}\| \notag \\
		&= \|\frac{1}{n}\sum_{i=1}^{n}(\bB \bff_i + \bu_i)(\bB \bff_i + \bu_i)^{\sfT} - \EE \qth{(\bB \bff_i + \bu_i)(\bB \bff_i + \bu_i)^{\sfT}}\| \notag \\
		& \leq \|\bB\|^{2} \|\frac{1}{n}\sum_{i=1}^{n} \bff_i \bff_i^{\sfT} - \EE \qth{\bff_i \bff_i^{\sfT}}\| + 2\|\bB\| \|\frac{1}{n}\sum_{i=1}^{n}\bff_i \bu_i^{\sfT}\| + \|\frac{1}{n}\sum_{i=1}^{n} \bu_i \bu_i^{\sfT} - \EE \qth{ \bu_i \bu_i^{\sfT}}\|.
	\end{align}
	We will then bound each of the three terms by matrix Bernstein inequality. For $\|\frac{1}{n}\sum_{i=1}^{n} \bff_i \bff_i^{\sfT} - \EE \qth{\bff_i \bff_i^{\sfT}}\|$, we apply Lemma \ref{lmm:matrix_Bernstein} by setting $\bZ_i=\bff_i\bff_i^{\sfT}$. We first choose proper $L$, $q_0$ and $q_1$ for (\ref{Bernstein_condition}) to hold. Recall from \prettyref{asmp:model} (a) that $\bff_i \in \subG_r(c_1^2)$ and $\EE\qth{\bff_i}=0$ implies $\|\bff_i\| \in \subG (r {\tilde{c}}^2)$ for some constant ${\tilde{ c}}>0$ \cite[Lemma 1]{jin2019short}. Therefore 
	\begin{align*}
		\PP(\|\bZ - \EE[\bZ]\| \geq L) & = \PP(\|\bff_i \bff_i^{\sfT} - \EE[\bff_i \bff_i^{\sfT}]\| \geq L) \\
		&\leq \PP(\|\bff_i \bff_i^{\sfT}\| +\| \EE[\bff_i \bff_i^{\sfT}]\| \geq L) \\
		&= \PP(\|\bff_i \bff_i^{\sfT}\| \geq L-1) 
		\leq \PP(\|\bff_i\|^2 \geq L-1) 
		= \PP(\|\bff_i\| \geq \sqrt{L-1}).
	\end{align*}
	Let $L= 22\tilde c^2 r \log n + 2$. Then by the sub-Gaussian tail bound
	\begin{align}
		Y\in \subG(\sigma^2),\quad \PP\qth{|Y-\EE\qth{Y}|>y}\leq 2e^{-{y^2\over 2\sigma^2}},
	\end{align}
	we have $\PP(\|\bff_i\| \geq \sqrt{L-1}) \leq 2 n^{-11} :=q_0$. For determining $q_1$ as in \prettyref{lmm:matrix_Bernstein} we note
	\begin{align*}
		\| \EE[\bZ]-\EE[ \bZ \11 \{ \|\bZ\| <L  \}  ]  \|  &=  \|\EE[ \bZ \11 \{ \|\bZ\| \geq L \} ] \| \\
		& \leq \EE[ \|\bZ\| \11 \{ \|\bZ\| \geq L \} ]  \\
		&= \EE[ (\|\bZ\|-L) \11 \{ \|\bZ\|-L \geq 0 \} ] + L \PP (\|\bZ\| \geq L ) \\
		& =  \int_{0}^{\infty}  \PP(\|\bZ\|-L \geq x) d x + L \PP (\|\bZ\| \geq L ) \\
		& = \int_{L}^{\infty}  \PP(\|\bff_i\bff_i^{\sfT}\| \geq y) d y + L \PP (\|\bff_i\bff_i^{\sfT}\| \geq L ) \\
		& \leq \int_{L}^{\infty}  \PP(\|\bff_i\|^2 \geq y) d y + L \PP (\|\bff_i\|^2 \geq L ) \\
		& \leq \int_{L}^{\infty} 2e^{-\frac{y}{2r{\tilde c}^2}}d y + 2Le^{-\frac{L}{2r{\tilde{ c}}^2}} \\
		&= (4rc^2+2L) e^{-\frac{L}{2r{\tilde{ c}}^2}}
		 \leq 3L e^{-\frac{L}{2r\tilde c^2}} \leq n^{-10} :=q_1
	\end{align*}
	for a sufficiently large $n$. Using the definition of $V$ we have 
	\begin{align*}
		V &= \max \{ \|\EE \qth{(\bZ - \EE \bZ )(\bZ - \EE \bZ )^{\sfT}}\|^{1/2}, \|\EE\qth{(\bZ - \EE \bZ )^{\sfT} (\bZ - \EE \bZ )}\|^{1/2} \} \\
		&= \|\EE \qth{(\bff_i \bff_i^{\sfT} - \EE \qth{\bff_i \bff_i^{\sfT}})(\bff_i \bff_i^{\sfT} - \EE \qth{\bff_i \bff_i^{\sfT}})}\|^{1/2}\\
        &= \|\EE \qth{\bff_i \bff_i^{\sfT} \bff_i \bff_i^{\sfT}} - (\EE \qth{\bff_i \bff_i^{\sfT}})^2\|^{1/2} 
        \leq \|\EE \qth{\bff_i \bff_i^{\sfT} \bff_i \bff_i^{\sfT}}\|^{1/2}.
	\end{align*}
	The last inequality is due to the fact that $\EE \qth{ \bff_i \bff_i^{\sfT} \bff_i \bff_i^{\sfT}}-(\EE \qth{\bff_i \bff_i^{\sfT}})^2$ and $(\EE \qth{\bff_i \bff_i^{\sfT}})^2$ are both positive semidefinite. Next, we notice that for any $\bv \in \RR^r$ such that $\|\bv\|=1$, 
	\begin{align*}
		\bv^{\sfT} (\EE \qth{\bff_i \bff_i^{\sfT} \bff_i \bff_i^{\sfT}}) \bv &= \EE [(\bff_i^{\sfT} \bv)^2 \bff_i^{\sfT} \bff_i] \\
		& \stepa{\leq} \sqrt{\EE \qth{(\bff_i^{\sfT} \bv)^4} \EE \qth{(\bff_i^{\sfT} \bff_i)^2 }} 
		\stepb{\leq} \tilde C_1 \sqrt{\EE \qth{\|\bff_i\|^4}}
		\stepc{\leq} \tilde C_2 r , 
	\end{align*}
	for constants $\tilde C_1,\tilde C_2>0$, where (a) follows using the Cauchy-Schwarz inequality, (b) follows as $\bff_i^{\sfT} \bv \in \subG(c_1^2)$, and (c) follows using $\|\bff_i\| \in \subG(\tilde c^2r)$. This implies $V \leq \sqrt{\tilde C_2 r}$. Therefore by \prettyref{lmm:matrix_Bernstein}, using $a=\frac{11 \log n}{\log r} $ we get for a constant $\tilde  C_3$
	\begin{equation} \label{matrix_perturbation_ineq2}
		\|\frac{1}{n}\sum_{i=1}^{n} \bff_i \bff_i^{\sfT} - \EE \qth{\bff_i \bff_i^{\sfT}}\| \leq C (\sqrt{\frac{\log n}{n}} + \frac{\log^2 n}{n})\leq \tilde C_3 \sqrt{\frac{\log n}{n}}
	\end{equation}
	with probability at least $1-3n^{-10}$ for all sufficiently large $n$, as $r$ is at most a constant. 
	
	For $\|\frac{1}{n}\sum_{i=1}^{n} \bff_i \bu_i^{\sfT} \|$, we apply Lemma \ref{lmm:matrix_Bernstein} by setting $\bZ_i=\bff_i \bu_i^{\sfT}$. We first choose proper $L$, $q_0$ and $q_1$ for (\ref{Bernstein_condition}) to hold. We claim that $\bu_i \in \subG_{d} (\tilde C_4^2)$ for a constant $\tilde  C_4>0$. This is because, $\bu_i = \bmu_{y_i} + \bepsilon_{i}$, so for any unit vector $\bv \in \RR^{d}$, $\bv^{\sfT} \bu_i = \bv^{\sfT}\bmu_{y_i} + \bv^{\sfT}\bepsilon_{i}$. Since $\bepsilon_i \in \subG_d (\sigma^2)$, we have $\bv^{\sfT}\bepsilon_{i} \in \subG (\sigma^2)$. On the other hand, notice that by \prettyref{asmp:regularity} (c) $c_5 \geq \max_{i\neq j}\|\boldsymbol{\mu}_{i}-\boldsymbol{\mu}_{j}\|$, and by \prettyref{asmp:model} (b), there is a constant $c\in (0,1)$ such that $\min_{j\in [K]}p_j>\frac c{K}$ and $\sum_{j=1}^{K}p_{j}\boldsymbol{\mu}_{j} = 0$. Therefore $\forall j \in [K]$,
 \begin{equation}
 \label{eq:bound_mu}
 \|\bmu_j\| = \| \bmu_j- \sum_{i=1}^{K} p_i \bmu_i\| = \| \sum_{i=1}^{K} p_i(\bmu_j - \bmu_i)\| \leq  \sum_{i=1}^{K} p_i \| \bmu_j - \bmu_i\| \leq \sum_{i=1}^{K} p_i c_5 = c_5.
 \end{equation}
Thus $| \bv^{\sfT}\bmu_{y_i}| \leq \|\bmu_{y_i}\| \leq c_5$. Then $\bv^{\sfT}\bmu_{y_i} \in \subG(\tilde C_4^2)$ for some constant $c$. Since $ \bv^{\sfT}\bmu_{y_i}$ and $\bv^{\sfT}\bepsilon_{i}$ are independent, we have $\bv^{\sfT} \bu_i = \bv^{\sfT}\bmu_{y_i} + \bv^{\sfT}\bepsilon_{i} \in \subG (\tilde  C_4^2 + \sigma^2)$, which implies
\begin{align}\label{eq:fasc2}
	\bu_i \in \subG_d (\tilde  C_5^2),
	\quad \|\bu_i\| \in \subG (d \tilde C_6^2)
\end{align}
for a constant $\tilde  C_6>0$ \cite[Lemma 1]{jin2019short}. Recall $\|\bff_i\| \in \subG (r c^2)$. Therefore $\|\bff_i \bu_i^{\sfT}\|$ is sub-Exponential \cite[Lemma 2.7.7]{vershynin2018high} with sub-Exponential norm $\tilde  C_7\sqrt{dr}$ for a constant $\tilde C_7>0$ \cite[Definition 2.7.5]{vershynin2018high}. In view of the properties of sub-Exponential distributions with parameter $\tilde  C_7\sqrt{dr}$ \cite[Proposition 2.7.1]{vershynin2018high} we get
	\begin{equation*}
		\PP(\|\bZ - \EE[\bZ]\| \geq L) = \PP(\|\bff_i \bu_i^{\sfT}\| \geq L) \leq 2n^{-11} :=q_0
	\end{equation*}
	For obtaining $q_1$ we use the following computation, 
	\begin{align*}
		\| \EE[\bZ]-\EE[ \bZ \11 \{ \|\bZ\| <L  \}  ]  \|  &=  \|\EE[ \bZ \11 \{ \|\bZ\| \geq L \} ] \| \\
		& \leq \EE[ \|\bZ\| \11 \{ \|\bZ\| \geq L \} ]  \\
		&= \EE[ (\|\bZ\|-L) \11 \{ \|\bZ\|-L \geq 0 \} ] + L \PP (\|\bZ\| \geq L ) \\
		& =  \int_{0}^{\infty}  \PP(\|\bZ\|-L \geq x) d x + L \PP (\|\bZ\| \geq L ) \\
		& = \int_{L}^{\infty}  \PP(\|\bff_i\bu_i^{\sfT}\| \geq y) d y + L \PP (\|\bff_i\bu_i^{\sfT}\| \geq L ) \\
		& \leq \int_{L}^{\infty} 2e^{-\frac{y}{2\tilde  C_7\sqrt{dr}}}d y + 2Le^{-\frac{L}{2\tilde  C_7\sqrt{dr}}} \\
		&= (4c^2\sqrt{dr}+2L) e^{-\frac{L}{2\tilde  C_7\sqrt{dr}}}
		\leq 3L e^{-\frac{L}{2\tilde  C_7\sqrt{dr}}}\leq n^{-10} :=q_1.
	\end{align*}
	where the last inequality holds for sufficiently large $n$. For the variance statistic $V$, by definition we have 
	\begin{align*}
		V &= \max \{ \|\EE\qth{ (\bZ - \EE \bZ )(\bZ - \EE \bZ )^{\sfT}}\|^{1/2}, \|\EE \qth{(\bZ - \EE \bZ )^{\sfT} (\bZ - \EE \bZ )}\|^{1/2} \} \\
		&= \max \{ \|\EE \qth{\bff_i \bu_i^{\sfT} \bu_i \bff_i^{\sfT}} \|^{1/2} , \|\EE \qth{\bu_i \bff_i^{\sfT} \bff_i \bu_i^{\sfT}} \|^{1/2} \} \\
		&\stepa{=} \max \{ \|\EE  \qth{\bu_i^{\sfT} \bu_i} \EE \qth{\bff_i \bff_i^{\sfT} }\|^{1/2} , \|\EE \qth{\bu_i \bu_i^{\sfT}} \EE \qth{\bff_i^{\sfT} \bff_i  }\|^{1/2} \} \\
		&= \max \{ \|\EE \qth{\| \bu_i\|^2} \bI_r \|^{1/2} , \|\EE \qth{bu_i \bu_i^{\sfT}} r  \|^{1/2} \} \\
		&\stepb{\leq} \tilde C_8\max\{\sqrt{d}, \sqrt{r}\} =\tilde C_8 \sqrt{d}, 
	\end{align*}
	for some constant $\tilde C_8>0$, where (a) follows as $\bu_i$ and $\bff_i$ are independent and (b) follows as $\|\bu_i\| \in \subG (d \tilde C_6^2)$ from \eqref{eq:fasc2} and 
	\begin{align}\label{eq:fasc3}
		\|\EE \qth{\bu_i \bu_i^{\sfT}}\|\leq \tilde C_9
	\end{align} 
	for a constant $\tilde C_9>0$. Hence, using \prettyref{lmm:matrix_Bernstein} with $a=\frac{11 \log n}{\log d} $ we get that for a constant $\tilde C_9>0$
	\begin{equation} \label{matrix_perturbation_ineq3}
		\|\frac{1}{n}\sum_{i=1}^{n} \bff_i \bu_i^{\sfT}\| \leq \tilde C_9 \sqrt{\frac{ d \log n}{n}}
	\end{equation}
	with probability at least $1-3n^{-10}$ for sufficiently large $n$.
	
	To bound $\|\frac{1}{n}\sum_{i=1}^{n} \bu_i \bu_i^{\sfT} - \EE \qth{\bu_i \bu_i^{\sfT}}\|$, we apply Lemma \ref{lmm:matrix_Bernstein} by setting $\bZ_i=\bu_i\bu_i^{\sfT}$. We first choose proper $L$, $q_0$ and $q_1$ for (\ref{Bernstein_condition}) to hold. Recall $\|\bu_i\| \in \subG (d \tilde C_6^2)$ from \eqref{eq:fasc2}. Therefore 
	\begin{align*}
		\PP(\|\bZ - \EE[\bZ]\| \geq L) & = \PP(\|\bu_i \bu_i^{\sfT} - \EE[\bu_i \bu_i^{\sfT}]\| \geq L) \\
		&\leq \PP(\|\bu_i \bu_i^{\sfT}\| +\| \EE[\bu_i \bu_i^{\sfT}]\| \geq L) \\
		& \stepa{\leq} \PP(\|\bu_i \bu_i^{\sfT}\| \geq L-\tilde C_9)
		\leq \PP(\|\bu_i\|^2 \geq L-\tilde C_9) 
		= \PP(\|\bu_i\| \geq \sqrt{L-\tilde C_9}),
	\end{align*}
	where (a) follows as $\| \EE[\bu_i \bu_i^{\sfT}]\| \leq \tilde C_9$ from \eqref{eq:fasc3}. Choose $L= 24\tilde C_6^2 d \log n + \tilde C_9$, by sub-Gaussian tail bound, we have $\PP(\|\bu_i\| \geq \sqrt{L-\tilde C_9}) \leq 2 n^{-12} :=q_0$. For $q_1$, 
	\begin{align*}
		\| \EE[\bZ]-\EE[ \bZ \11 \{ \|\bZ\| <L  \}  ]  \|  &=  \|\EE[ \bZ \11 \{ \|\bZ\| \geq L \} ] \| \\
		& \leq \EE[ \|\bZ\| \11 \{ \|\bZ\| \geq L \} ]  \\
		&= \EE[ (\|\bZ\|-L) \11 \{ \|\bZ\|-L \geq 0 \} ] + L \PP (\|\bZ\| \geq L ) \\
		& =  \int_{0}^{\infty}  \PP(\|\bZ\|-L \geq x) d x + L \PP (\|\bZ\| \geq L ) \\
		& = \int_{L}^{\infty}  \PP(\|\bu_i\bu_i^{\sfT}\| \geq y) d y + L \PP (\|\bu_i\bu_i^{\sfT}\| \geq L ) \\
		& \leq \int_{L}^{\infty}  \PP(\|\bu_i\|^2 \geq y) d y + L \PP (\|\bu_i\|^2 \geq L ) \\
		& \leq \int_{L}^{\infty} 2e^{-\frac{y}{2d\tilde C_6^2}}d y + 2Le^{-\frac{L}{2d\tilde C_6^2}} \\
		&= (4dc^2+2L) e^{-\frac{L}{2r\tilde C_6^2}}
		\leq 3L e^{-\frac{L}{2d\tilde C_6^2}}
		\leq n^{-10} :=q_1
	\end{align*}
	for sufficiently large $n$. Here we use the fact that $n \geq d$. For the variance statistic $V$, by definition we have 
	\begin{align*}
		V &= \max \{ \|\EE \qth{(\bZ - \EE \bZ )(\bZ - \EE \bZ )^{\sfT}}\|^{1/2}, \|\EE\qth{(\bZ - \EE \bZ )^{\sfT} (\bZ - \EE \bZ )}\|^{1/2} \} \\
		&= \|\EE \qth{\bu_i \bu_i^{\sfT} \bu_i \bu_i^{\sfT}} - (\EE \qth{\bu_i \bu_i^{\sfT}})^2\|^{1/2} \\
		& \leq \|\EE \qth{\bu_i \bu_i^{\sfT} \bu_i \bu_i^{\sfT}}\|^{1/2}.
	\end{align*}
	The last inequality is due to the fact that $\EE \qth{\bu_i \bu_i^{\sfT} \bu_i \bu_i^{\sfT}}-(\EE \qth{\bu_i \bu_i^{\sfT}})^2$ and $(\EE \qth{\bu_i \bu_i^{\sfT}})^2$ are both positive semidefinite. To bound $\|\EE \qth{\bu_i \bu_i^{\sfT} \bu_i \bu_i^{\sfT}}\|$, note that for any $\bv \in \RR^d,\|\bv\|=1$, 
	\begin{align*}
		\bv^{\sfT} (\EE \qth{\bu_i \bu_i^{\sfT} \bu_i \bu_i^{\sfT}}) \bv &= \EE [(\bu_i^{\sfT} \bv)^2 \bu_i^{\sfT} \bu_i] 
		\stepa{\leq} \sqrt{\EE \qth{(\bu_i^{\sfT} \bv)^4} \EE\qth{(\bu_i^{\sfT} \bu_i)^2} } 
		\stepb{\leq} \tilde C_{10} \sqrt{\EE \qth{\|\bu_i\|^4}}
		 \stepc{\leq} \tilde C_{11} d, 
	\end{align*}
	for constants $\tilde C_{10},\tilde C_{11}>0$, where (a) follows using the Cauchy-Schwarz inequality, (b) and (c) follow using \eqref{eq:fasc2}. This implies $V \leq \sqrt{\tilde C_{12}d}$. Therefore by Lemma \ref{lmm:matrix_Bernstein}, take $a=\frac{11 \log n}{\log d} $,
	\begin{equation} \label{matrix_perturbation_ineq4}
		\|\frac{1}{n}\sum_{i=1}^{n} \bu_i \bu_i^{\sfT} - \EE \qth{\bu_i \bu_i^{\sfT}}\| \leq \tilde C_{13} \pth{\sqrt{\frac{d\log n}{n}} + \frac{d\log^2 n}{n}}
	\end{equation}
	for a constant $\tilde C_{13}>0$ with probability at least $1-3n^{-10}$ for sufficiently large $n$. 
	
	Finally, combine (\ref{matrix_perturbation_ineq1}), (\ref{matrix_perturbation_ineq2}), (\ref{matrix_perturbation_ineq3}), (\ref{matrix_perturbation_ineq4}), we have
	\begin{equation*}
		\|\hat{\bSigma}- \bSigma\| \leq \bar C_1\pth{\lambda_{1}^{*} \sqrt{\frac{\log n}{n}} + \sqrt{\frac{\lambda_{1}^{*}d \log n}{n}} + \sqrt{\frac{d \log n}{n}} + \frac{d \log^2 n}{n}}
	\end{equation*}
	 with probability $1-\bar C_2n^{-10}$ for some constants $\bar C_1,\bar C_2>0$, as required.
\end{proof}

\subsection{Proof for \prettyref{lmm:justify_approx_perp_lemma}}
\label{app:justify_approx_perp_lemma_proof}
Suppose that $\bB$ is already given and fixed. Thus, $\bU$ is also fixed. Consider the Grassmannian manifold $G_{d,k}$ that consists of all $k$-dimensional subspaces of $\mathbb{R}^{d}$. Let $\bM \sim \text{Unif} (G_{d,k})$ (see \cite{vershynin2018high} for detailed discussion of the measure on Grassmannian manifold), which means $\bM$ represents a uniformly randomly chosen $k$-dimensional subspace of $\mathbb{R}^{d}$. Let the columns of $\bU$ be $(\bw_{1},\cdots,\bw_{r})$. Then using \cite[Lemma 5.3.2]{vershynin2018high} we can conclude
\begin{equation*}
	\mathbb{P} \left( \left|\|\bw_{i}^{T} \bM \|- \sqrt{\frac{k}{d}}\right| \geq t \right) \leq 2e^{-\xi d t^{2}}, 
	\quad 1\leq i\leq r,
\end{equation*}
for some constant $\xi>0$. Choosing $t=\sqrt{\frac{10 \log n}{\xi d}}$, the above implies that for each $i \in [r]$, with probability no less than $1-2\exp{(-\xi d t^{2})} = 1-2n^{-10}$,
\begin{equation*}
	\|\bw_{i}^{T} \bM\| \leq t + \sqrt{\frac{k}{d}} = \sqrt{\frac{10 \log n}{\xi d}} + \sqrt{\frac{k}{d}}.
\end{equation*}
Using the union bound, we have with a probability at least $1-2r n^{-10} \geq 1-n ^{-9}$ (for sufficiently large $n$), for every $i \in [r]$, we have $\|\bw_{i}^{T} \bM\| \leq \sqrt{\frac{10 \log n}{\xi d}} + \sqrt{\frac{k}{d}}$. Using 
$$\|\bU^{T}\bM\| \leq \sqrt{\sum_{i=1}^{r} \|\bw_{i}^{T} \bM\|^{2}} \leq \sqrt{r} \pth{\sqrt{\frac{10 \log n}{\xi d}} + \sqrt{\frac{k}{d}}}
\leq  \sqrt{\frac{11 r \log n}{\xi d}}$$ 
for sufficiently large $n$ (recall $r,k$ are smaller than some constant) we get the required results.

\subsection{Proof of \prettyref{lmm:bound_on_Delta_norm}}
\begin{proof}
We will use the following lemma (Weyl's inequality) for controlling the difference of eigenvalues between two matrices.
\begin{lemma}[Weyl's inequality for eigenvalues]\cite[Lemma 2.2]{MAL-079}
\label{lmm:weyl}
Let $\bA, \bE \in \mathbb{R}^{n \times n}$ be two real symmetric matrices. For every $1 \leq i \leq n$, the $i$-th largest eigenvalues of $\bA$ and $\bA + \bE$ obey
\[
|\lambda_i(\bA) - \lambda_i(\bA + \bE)| \leq \|\bE\|.
\]
\end{lemma}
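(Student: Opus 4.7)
The plan is to prove Weyl's inequality via the Courant--Fischer min--max characterization of eigenvalues of real symmetric matrices. Recall that for any symmetric $\bM \in \mathbb{R}^{n\times n}$ with ordered eigenvalues $\lambda_1(\bM) \geq \cdots \geq \lambda_n(\bM)$,
$$\lambda_i(\bM) = \max_{\substack{S \subseteq \mathbb{R}^n \\ \dim S = i}} \, \min_{\substack{\bx \in S \\ \|\bx\|=1}} \bx^\sfT \bM \bx.$$
I would take this identity as the single nontrivial ingredient; it is a standard consequence of the spectral theorem and can be cited directly.

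First I would establish the elementary inequality $|\bx^\sfT \bE \bx| \leq \|\bE\|$ for every unit vector $\bx \in \mathbb{R}^n$. This follows immediately from the symmetry of $\bE$, since the spectral norm of a symmetric matrix equals $\max_{\|\bx\|=1}|\bx^\sfT \bE \bx|$. Next I would note the exact pointwise identity $\bx^\sfT(\bA+\bE)\bx = \bx^\sfT \bA \bx + \bx^\sfT \bE \bx$, which together with the previous bound yields, uniformly for every unit $\bx$,
$$\bx^\sfT \bA \bx - \|\bE\| \;\leq\; \bx^\sfT (\bA + \bE) \bx \;\leq\; \bx^\sfT \bA \bx + \|\bE\|.$$

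Then I would insert this two-sided bound inside the Courant--Fischer expression for $\lambda_i(\bA+\bE)$. Fix any $i$-dimensional subspace $S$. Because $\pm\|\bE\|$ is independent of $\bx$, taking the minimum over unit $\bx \in S$ preserves the inequalities, giving
$$\min_{\substack{\bx \in S\\\|\bx\|=1}} \bx^\sfT \bA \bx - \|\bE\| \;\leq\; \min_{\substack{\bx \in S\\\|\bx\|=1}} \bx^\sfT (\bA+\bE) \bx \;\leq\; \min_{\substack{\bx \in S\\\|\bx\|=1}} \bx^\sfT \bA \bx + \|\bE\|.$$
Taking the maximum over all $i$-dimensional subspaces $S$ of $\mathbb{R}^n$ converts each side into the corresponding eigenvalue by Courant--Fischer, yielding $\lambda_i(\bA) - \|\bE\| \leq \lambda_i(\bA+\bE) \leq \lambda_i(\bA) + \|\bE\|$, which is the claim.

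There is no genuine obstacle here: the only subtle step is verifying that the min and the max are not disturbed by the additive constant $\pm \|\bE\|$, which is immediate. An alternative route would be a dimension-counting argument comparing the subspaces on which $\bA$ and $\bA+\bE$ are bounded below or above by $\lambda_i(\bA) \pm \|\bE\|$, but the Courant--Fischer route is both shorter and more transparent, so I would use it.
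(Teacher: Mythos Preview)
Your proof via the Courant--Fischer min--max characterization is correct and is the standard argument for Weyl's inequality. Note, however, that the paper does not supply its own proof of this lemma: it is stated as a citation to an external reference (\cite[Lemma 2.2]{MAL-079}) and used as a black-box tool, so there is no in-paper argument to compare against.
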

We will also use the Davis-Kahan's Theorem \cite[Corollary 2.8]{MAL-079} for controlling the distance between two the subspaces spanned by eigenvectors. 
\begin{lemma}[Davis-Kahan's $\sin \theta$ theorem] \label{lmm:davis-kahan}
Let $\bM^*$ and $\bM = \bM^* + \bE$ be two $n \times n$ real positive semi-definite symmetric matrices. Let $\lambda_1 \geq \cdots \geq \lambda_{n} \geq 0$ be the eigenvalues of $\bM$,  $\lambda_1^* \geq \cdots \geq \lambda_{n}^*\geq 0$ be the eigenvalues of $\bM^*$, and $\bu_i$ (resp. $\bu_i^*$) stands for the eigenvector associated with the eigenvalue $\lambda_i$ (resp. $\lambda_i^*$). We denote $\bU := [\bu_1, \cdots, \bu_r] \in \RR^{n \times r}$, $\bU^* := [\bu_1^*, \cdots, \bu_r^*] \in \RR^{n \times r}$. If $\|\bE\| \leq (1 - 1/\sqrt{2})(\lambda_r^* - \lambda_{r+1}^*) $, then $\|\bU\bU^{\sfT} - \bU^*\bU^{*\sfT}\| \leq \frac{2\|\bE\|}{\lambda_r^* - \lambda_{r+1}^*}$.

\end{lemma}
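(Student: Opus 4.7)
The plan is to reduce the bound on $\|\bU\bU^{\sfT}-\bU^*\bU^{*\sfT}\|$ to controlling the operator norm of the ``cross-projection'' matrix $\bU^{*\sfT}_{\perp}\bU$, where $\bU^*_{\perp}\in\RR^{n\times(n-r)}$ is the matrix whose columns are $\bu^*_{r+1},\ldots,\bu^*_n$ (so that $[\bU^*,\bU^*_{\perp}]$ is an orthonormal basis of $\RR^n$). Since $\bU\bU^{\sfT}$ and $\bU^*\bU^{*\sfT}$ are orthogonal projections of the same rank $r$, a CS-decomposition identity gives
$$\|\bU\bU^{\sfT}-\bU^*\bU^{*\sfT}\| \,=\, \|(\bI-\bU^*\bU^{*\sfT})\bU\| \,=\, \|\bU^{*\sfT}_{\perp}\bU\|,$$
so it suffices to bound this last quantity.

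Next, I would derive a Sylvester-type relation. Write $\bLambda=\diag(\lambda_1,\ldots,\lambda_r)$ and $\bLambda^*_{\perp}=\diag(\lambda^*_{r+1},\ldots,\lambda^*_n)$. Using $\bM\bU=\bU\bLambda$ and $\bM^*\bU^*_{\perp}=\bU^*_{\perp}\bLambda^*_{\perp}$, and substituting $\bM=\bM^*+\bE$, computing $\bU^{*\sfT}_{\perp}\bM\bU$ in two ways yields
$$\bLambda^*_{\perp}\,\bU^{*\sfT}_{\perp}\bU \,-\, \bU^{*\sfT}_{\perp}\bU\,\bLambda \,=\, -\,\bU^{*\sfT}_{\perp}\,\bE\,\bU.$$
Denote $\bB := \bU^{*\sfT}_{\perp}\bU$. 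To extract an operator-norm bound (without paying a $\sqrt{r}$ factor that a naive Frobenius argument would incur), I would evaluate this identity on a top singular pair $(\bv,\bw)$ of $\bB$, so that $\bB\bw=\|\bB\|\bv$ and $\bB^{\sfT}\bv=\|\bB\|\bw$. Sandwiching with $\bv^{\sfT}(\cdot)\bw$ gives
$$\|\bB\|\bigl(\bv^{\sfT}\bLambda^*_{\perp}\bv \,-\, \bw^{\sfT}\bLambda\bw\bigr) \,=\, -\,\bv^{\sfT}\bU^{*\sfT}_{\perp}\bE\bU\bw.$$
By the Rayleigh-quotient bounds $\bv^{\sfT}\bLambda^*_{\perp}\bv\leq\lambda^*_{r+1}$ and $\bw^{\sfT}\bLambda\bw\geq\lambda_r$, the scalar factor on the left has magnitude at least $\lambda_r-\lambda^*_{r+1}$ whenever this gap is positive. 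Since the right-hand side is bounded by $\|\bE\|$ (as $\bU^{*\sfT}_{\perp},\bU$ have orthonormal columns), we obtain $\|\bB\|(\lambda_r-\lambda^*_{r+1})\leq\|\bE\|$.

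It remains to convert the perturbed gap into the assumed gap $\delta:=\lambda^*_r-\lambda^*_{r+1}$. By Weyl's inequality (Lemma~\ref{lmm:weyl}), $\lambda_r\geq\lambda^*_r-\|\bE\|$, so $\lambda_r-\lambda^*_{r+1}\geq\delta-\|\bE\|$, which is positive under the hypothesis $\|\bE\|\leq(1-1/\sqrt{2})\delta$ and in fact satisfies $\delta-\|\bE\|\geq\delta/\sqrt{2}$. Combining,
$$\|\bU\bU^{\sfT}-\bU^*\bU^{*\sfT}\| \,=\, \|\bB\| \,\leq\, \frac{\|\bE\|}{\lambda_r-\lambda^*_{r+1}} \,\leq\, \frac{\sqrt{2}\,\|\bE\|}{\delta} \,\leq\, \frac{2\,\|\bE\|}{\delta},$$
which is the claimed bound. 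The main subtlety is the jump from the matrix identity for $\bB$ to an operator-norm estimate; the top singular-vector trick is what preserves the correct constant, whereas a generic Sylvester (or Frobenius) argument would incur a spurious dimensional factor. The hypothesis $\|\bE\|\leq(1-1/\sqrt{2})\delta$ is calibrated precisely to absorb the perturbation of the gap so that the final constant can be written as $2$.
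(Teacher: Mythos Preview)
The paper does not prove this lemma; it is quoted as \cite[Corollary 2.8]{MAL-079} and used as a black box. Your argument is a correct self-contained proof: the Sylvester identity $\bLambda^*_{\perp}\bB-\bB\bLambda=-\bU^{*\sfT}_{\perp}\bE\bU$ is right, the singular-vector sandwich extracts the operator-norm bound with separation $\lambda_r-\lambda^*_{r+1}$, and Weyl's inequality (Lemma~\ref{lmm:weyl}) converts this to the stated gap $\lambda^*_r-\lambda^*_{r+1}$ under the hypothesis $\|\bE\|\leq(1-1/\sqrt{2})(\lambda^*_r-\lambda^*_{r+1})$. The identity $\|\bU\bU^{\sfT}-\bU^*\bU^{*\sfT}\|=\|\bU^{*\sfT}_{\perp}\bU\|$ for equal-rank orthogonal projections is standard (it follows from the CS decomposition, or directly from the fact that both equal the sine of the largest principal angle), so nothing is missing there. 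Your proof in fact yields the slightly sharper constant $\sqrt{2}$ before relaxing to $2$.
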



	By \prettyref{lmm:davis-kahan}, since $\hat{\bV}_{r}$ consists of the top $r$ eigenvectors of $\hat{\bSigma}$ and $\bV_{r}$ consists of the top $r$ eigenvectors of $\bSigma$, we have
	\begin{equation*}
		\|\bDelta\| \leq \frac{2\|\hat{\bSigma}- \bSigma\|}{\lambda_{r}(\bSigma)-\lambda_{r+1}(\bSigma)},
	\end{equation*}
	where $\lambda_{r}(\bSigma)$ and $\lambda_{r+1}(\bSigma)$ are the $r$-th and $(r+1)$-th largest eigenvalues of $\bSigma$ respectively. Notice that $\bSigma=\bB\bB^{\sfT} + \bSigma_u$, therefore using \prettyref{lmm:weyl} we have
	\begin{equation*}
		\lambda_{r}(\bSigma)-\lambda_{r+1}(\bSigma) \geq \lambda_{r}(\bB\bB^\sfT)-\lambda_{r+1}(\bB\bB^\sfT)- 2\|\bSigma_u\|=\sigma_{\min}(\bB)^2 - 2\|\bSigma_u\|.  
	\end{equation*}
	Therefore from \prettyref{lmm:matrix_perturbation_lemma} and the \prettyref{asmp:factors_genr} gives us the result.
\end{proof}

\subsection{Proof of \prettyref{lmm:results_on_mean_matrices}}
\subsubsection{Non-isotropic noise}
\begin{proof} We first consider the case where the noise is not isotropic. Using \prettyref{asmp:model} and the decomposition $\mathbb{E}[\bmu_{y_{i}}\bmu_{y_{i}}^{\sfT}]=\bM\tilde \bLambda\bM^{\sfT}$ as in \eqref{eq:eigen-decomp}
we get 
\begin{align*}
    \bSigma := \mathbb{E} [{\bx_{i}\bx_{i}^{\sfT}}]
    & =  \mathbb{E} [(\bB\bff_{i}+ \bmu_{y_{i}} + \bepsilon_{i})(\bB\bff_{i}+ \bmu_{y_{i}} + \bepsilon_{i})^{\sfT}] \\
    & = \bB \bB^{\sfT} + \mathbb{E}[\bmu_{y_{i}}\bmu_{y_{i}}^{\sfT}] + \EE [\bepsilon_{i}\bepsilon_{i}^{\sfT}] \\
    &= \bU \bLambda \bU^{\sfT} + \bM\tilde \bLambda\bM^{\sfT} + \EE [\bepsilon_{i}\bepsilon_{i}^{\sfT}].
\end{align*}
Let $\tilde{\bSigma}$ denote the matrix $\bU \bLambda \bU^{\sfT} + (\bI-\bU\bU^{\sfT})\bM\tilde \bLambda\bM^{\sfT}(\bI-\bU\bU^{\sfT})^{\sfT}$. Notice that 
\begin{equation*}
\bU^{\sfT} (\bI-\bU\bU^{\sfT})\bM = (\bU^{\sfT}- \bU^{\sfT}\bU\bU^{\sfT}) \bM = (\bU^{\sfT}-\bU^{\sfT})\bM = 0.   
\end{equation*}
This implies that the column space of $ (\bI-\bU\bU^{\sfT})\bM\tilde \bLambda\bM^{\sfT}(\bI-\bU\bU^{\sfT})^{\sfT}$ (which belongs to the column space of $(\bI-\bU\bU^{\sfT})\bM$) is orthogonal to the column space of $\bU$. Thus we can write the spectral decomposition of $(\bI-\bU\bU^{\sfT})\bM \tilde \bLambda \bM^{\sfT}(\bI-\bU\bU^{\sfT})^{\sfT}$ as
\begin{align}\label{eq:m7}
     (\bI-\bU\bU^{\sfT})\bM\tilde \bLambda\bM^{\sfT}(\bI-\bU\bU^{\sfT})^{\sfT} = \bU_{\perp} \bar \bLambda \bU_{\perp}^{\sfT}
\end{align}
for a diagonal matrix $\bar{\bLambda}$ and an orthogonal matrix $\bU_{\perp}$ that satisfies $\bU_{\perp}\bU^{\sfT}=0$.
In view of the last display we have 
\begin{align}\label{eq:m6}
	\|\mathbb{E} [\bmu_{y_{i}}\bmu_{y_{i}}^{\sfT}]\|=\|\tilde \bLambda\|
	\geq \|(\bI-\bU\bU^{\sfT})\bM\tilde \bLambda\bM^{\sfT}(\bI-\bU\bU^{\sfT})^{\sfT}\| = \|\bar \bLambda\|.
\end{align}
In view of \eqref{eq:m7} we note that 
\begin{equation*}
    \tilde{\bSigma} =\bU \bLambda \bU^{\sfT} + (\bI-\bU\bU^{\sfT})\bM\tilde \bLambda\bM^{\sfT}(\bI-\bU\bU^{\sfT})^{\sfT} = (\bU, \bU_{\perp},\bU_0)\begin{pmatrix}
 \bLambda & &  \\
  & \bar \bLambda & \\
 & & 0
 \end{pmatrix} (\bU, \bU_{\perp},\bU_0)^{\sfT}
\end{equation*}
where $\bU_{0}$ is the orthogonal complement of $(\bU, \bU_{\perp})$. In view of \prettyref{asmp:factors_genr} and \eqref{eq:m6}, as $\bLambda$ is an $r\times r$ matrix we get 
\begin{align}\label{eq:m8}
\lambda_{r}(\bLambda) =\sigma_{\min}^2(\bB) \geq 3\|\bar \bLambda\| 	
\end{align} 
which implies that the set of top $r$ eigenvectors of $\tilde{\bSigma}$ is given by $\bU$. Since $\bV_{r}$ is the top $r$ eigenvectors of $\bSigma$, by \prettyref{lmm:davis-kahan}, 
\begin{align} \label{eq:dist_U_Vr}
\dist (\bU, \bV_{r}) := \|\bU\bU^{\sfT}- \bV_{r}\bV_{r}^{\sfT}\| \leq \frac{2\|\bSigma- \tilde{\bSigma}\|}{\lambda_{r}(\tilde{\bSigma})- \lambda_{r+1}(\tilde{\bSigma})}.
\end{align}
We bound the numerator and denominator of the right most term in the above display in the following way. To bound the denominator, we use \eqref{eq:m8} to get via \prettyref{lmm:davis-kahan}
\begin{align}\label{eq:m9}
	\lambda_{r}(\tilde{\bSigma}) - \lambda_{r+1}(\tilde{\bSigma})\geq \sigma_{\min}(\bB)^{2} - \|\bar \bLambda\| \geq \frac12 \sigma_{\min}(\bB)^{2}
\end{align} 
To bound the numerator $\|\bSigma- \tilde{\bSigma}\|$ in \eqref{eq:dist_U_Vr} we note the decomposition
\begin{align}\label{eq:m10}
    \bSigma- \tilde{\bSigma} & = (\bU \bLambda \bU^{\sfT} + \bM\bV\bM^{\sfT} + \EE [\bepsilon_{i}\bepsilon_{i}^{\sfT}]) - (\bU \bLambda \bU^{\sfT} + (\bI-\bU\bU^{\sfT})\bM\bV\bM^{\sfT}(\bI-\bU\bU^{\sfT})^{\sfT}) 
    \nonumber\\
    &= \bU\bU^{\sfT} \bM\bV\bM^{\sfT} +\bM\bV\bM^{\sfT}\bU\bU^{\sfT} - \bU\bU^{\sfT} \bM\bV\bM^{\sfT} \bU\bU^{\sfT} + \EE [\bepsilon_{i}\bepsilon_{i}^{\sfT}].
\end{align}
Recall $\bepsilon_i \sim \subG_{d} (\sigma^2)$, which means for any $\bv \in \RR^d$ such that $\|\bv\|=1$, $\bv^{\sfT}\bepsilon_{i} \sim \subG (\sigma^2)$. Thus,
\begin{equation*}
    \bv^{\sfT} \EE [\bepsilon_{i}\bepsilon_{i}^{\sfT}] \bv = \EE \qth{(\bv^{\sfT}\bepsilon_{i})^2} \leq \tilde C_1 \sigma^2,
\end{equation*}
for any $\|v\|=1$, where $\tilde C_1>0$ is a constant. Therefore $\|\EE [\bepsilon_{i}\bepsilon_{i}^{\sfT}]\| \leq \tilde C_1\sigma^2$. We then continue \eqref{eq:m10} to have
\begin{align*}
    \|\bSigma- \tilde{\bSigma} \|& \leq  \|\bU\bU^{\sfT} \bM\bV\bM^{\sfT}\| +\|\bM\bV\bM^{\sfT}\bU\bU^{\sfT}\| + \|\bU\bU^{\sfT} \bM\bV\bM^{\sfT} \bU\bU^{\sfT}\| + \|\EE [\bepsilon_{i}\bepsilon_{i}^{\sfT}]\|  \\
    &\leq 2\|\bU\|\|\bU^{\sfT}\bM\|\|\bV\|\|\bM^{\sfT}\| + \|\bU\|\|\bU^{\sfT} \bM\|\|\bV\|\bM^{\sfT} \bU\|\|\bU^{\sfT}\| +c\sigma^2 \\
    & \leq \tilde C_2(\|\bU^{\sfT}\bM\| + \sigma^2)
\end{align*}
for a constant $\tilde C_2>0$. Using \eqref{eq:dist_U_Vr}, 
\begin{align}\label{eq:m11}
	\dist (\bU, \bV_{r})  \leq C\pth{\frac{\|\bU^{\sfT}\bM\| + \sigma^2}{\sigma_{\min}(\bB)^2}} 
	\leq \tilde C_3\pth{\frac{1}{\sigma_{\max}(\bB)} \pth{\sigma \vee \sqrt{\frac{1}{\log n}}}},
\end{align}
for some constant $\tilde C_3>0$, where the last inequality follows from \prettyref{asmp:factors_genr}(c). 
We are now in a position to proceed with the proof of our desired results. 

\begin{itemize}
	\item To show the first claim in \prettyref{lmm:results_on_mean_matrices}, we note
\begin{align*}
    \|(\bI-\bV_{r}\bV_{r}^{\sfT})\bB\| &\stepa{\leq} \|(\bI-\bU\bU^{\sfT})\bB\|  +  \|(\bU\bU^{\sfT}- \bV_{r}\bV_{r}^{\sfT})\bB\| \\
    &\stepb{=} 0 + \|(\bU\bU^{\sfT}- \bV_{r}\bV_{r}^{\sfT})\bB\| \\
    &\stepc{\leq}  \dist (\bU,\bV_{r})\|\bB\|  \\
    &\stepd{\leq}  \frac{\tilde C_4}{\sigma_{\max}(\bB)} \pth{\sigma \vee \sqrt{\frac{1}{\log n}}} \times \sigma_{\max}(\bB)
    \leq \tilde C_4\pth{\sigma \vee \sqrt{\frac{1}{\log n}}},
\end{align*}
where (a) follows from triangle inequality, (b) follows from the spectral decomposition $\bB\bB^\sfT=\bU\bLambda\bU^\sfT$, (c) follows from $\|(\bU\bU^{\sfT}- \bV_{r}\bV_{r}^{\sfT})\bB\|\leq \|(\bU\bU^{\sfT}- \bV_{r}\bV_{r}^{\sfT})\|\|\bB\|$ and $\|\bU\bU^{\sfT}- \bV_{r}\bV_{r}^{\sfT}\|=\dist(\bU,\bV_r)$ as defined in \eqref{eq:dist_U_Vr} and (d) follows from \eqref{eq:m11}.

\item For the third claim, we note using triangle inequality and \eqref{eq:dist_U_Vr} that for any $j_1\neq j_2$
\begin{align*}
    &\| (\bI-\bV_{r}\bV_{r}^{\sfT})(\bmu_{j_1} -\bmu_{j_2}) \|  \\
    &\geq \|(\bI-\bU\bU^{\sfT})(\bmu_{j_1} -\bmu_{j_2}) \| - \|(\bU\bU^{\sfT}- \bV_{r}\bV_{r}^{\sfT})(\bmu_{j_1} -\bmu_{j_2})\| \\
    &\geq \|\bmu_{j_1} -\bmu_{j_2}\| - \|\bU\bU^{\sfT}(\bmu_{j_1} -\bmu_{j_2})\| - \dist (\bU,\bV_{r})\|\bmu_{j_1} -\bmu_{j_2}\|.
\end{align*}
Notice that $\bmu_{j_1} -\bmu_{j_2} \in \text{Col} (\bM)$. As $\bM$ is an orthogonal matrix, there exists $\balpha$ such that $\bmu_{j_1} -\bmu_{j_2} = \bM \balpha$, and $\|\bmu_{j_1} -\bmu_{j_2}\| = \|\balpha\|$. Then we have 
\begin{align*}
   \|\bU\bU^{\sfT}(\bmu_{j_1} -\bmu_{j_2})\|  = \|\bU\bU^{\sfT}\bM \balpha \| 
   \leq \|\bU\|\|\bU^{\sfT}\bM\|\| \balpha\| 
    \leq \frac{1}{2} \|\balpha\| = \frac{1}{2} \|\bmu_{j_1} -\bmu_{j_2}\|,
\end{align*}
where the last inequality followed using $\|\bU^\sfT \bM\|\leq \gamma_5$ from \prettyref{asmp:factors_genr} and we can pick $\gamma_5>0$ to be very small. Therefore, using \eqref{eq:m11}, as $\sigma<c$ for a sufficiently small constant $c>0$ (\prettyref{asmp:model}), we get
\begin{align*}
    &\| (\bI-\bV_{r}\bV_{r}^{\sfT})(\bmu_{j_1} -\bmu_{j_2}) \| \\
    &\geq \|\bmu_{j_1} -\bmu_{j_2}\| - \|\bU\bU^{\sfT}(\bmu_{j_1} -\bmu_{j_2})\| - \dist (\bU,\bV_{r})\|\bmu_{j_1} -\bmu_{j_2}\| \\
    & \geq \|\bmu_{j_1} -\bmu_{j_2}\|- \frac{1}{2} \|\bmu_{j_1} -\bmu_{j_2}\| - \frac 14 \|\bmu_{j_1} -\bmu_{j_2}\|  
    \geq \frac 14 \|\bmu_{j_1} -\bmu_{j_2}\|.
\end{align*}

\item For the second claim, notice that by \prettyref{lmm:weyl} we have,
\begin{align}\label{eq:m12}
     &\lambda_{k} ((\bI-\bV_{r}\bV_{r}^{\sfT})\mathbb{E}[\bmu_{y_{i}}\bmu_{y_{i}}^{\sfT}](\bI-\bV_{r}\bV_{r}^{\sfT})) 
     \nonumber \\ 
     &\geq  \lambda_{k} ((\bI-\bU\bU^{\sfT})\mathbb{E}[\bmu_{y_{i}}\bmu_{y_{i}}^{\sfT}](\bI-\bU\bU^{\sfT}))
     \nonumber\\
     &\quad 
     - \|(\bI-\bU\bU^{\sfT})\mathbb{E}[\bmu_{y_{i}}\bmu_{y_{i}}^{\sfT}](\bI-\bU\bU^{\sfT})- (\bI-\bV_{r}\bV_{r}^{\sfT})\mathbb{E}[\bmu_{y_{i}}\bmu_{y_{i}}^{\sfT}](\bI-\bV_{r}\bV_{r}^{\sfT})\| 
     \nonumber \\ 
     & \geq \lambda_{k} (\mathbb{E}[\bmu_{y_{i}}\bmu_{y_{i}}^{\sfT}]) - \| \mathbb{E}[\bmu_{y_{i}}\bmu_{y_{i}}^{\sfT}] -
     (\bI-\bU\bU^{\sfT})\mathbb{E}[\bmu_{y_{i}}\bmu_{y_{i}}^{\sfT}](\bI-\bU\bU^{\sfT}) \| 
     \nonumber\\
     &
     \quad- \|(\bI-\bU\bU^{\sfT})\mathbb{E}[\bmu_{y_{i}}\bmu_{y_{i}}^{\sfT}](\bI-\bU\bU^{\sfT}) - (\bI-\bV_{r}\bV_{r}^{\sfT})\mathbb{E}[\bmu_{y_{i}}\bmu_{y_{i}}^{\sfT}](\bI-\bV_{r}\bV_{r}^{\sfT})\|.
\end{align}
We also have for a constant $\tilde C_5$
\begin{align}\label{eq:m13}
    &\| \mathbb{E}[\bmu_{y_{i}}\bmu_{y_{i}}^{\sfT}] -
     (\bI-\bU\bU^{\sfT})\mathbb{E}[\bmu_{y_{i}}\bmu_{y_{i}}^{\sfT}](\bI-\bU\bU^{\sfT}) \| 
     \nonumber \\
    & = \|\bU\bU^{\sfT} \bM\tilde \bLambda\bM^{\sfT} +\bM\tilde \bLambda\bM^{\sfT}\bU\bU^{\sfT} - \bU\bU^{\sfT} \bM\tilde \bLambda\bM^{\sfT} \bU\bU^{\sfT}\| 
    \nonumber \\
    & \leq 2 \|\bU^{\sfT}\bM\|\|\tilde \bLambda\| + \|\bU^{\sfT}\bM\|^{2}\|\tilde \bLambda\| 
    \stepa{\leq} \tilde C_5 (2\|\bU^{\sfT}\bM\| + \|\bU^{\sfT}\bM\|^{2})
    \stepb{\leq} \gamma,
\end{align}
where (a) followed from \prettyref{asmp:regularity} and (b) holds a very small constant $\gamma>0$ using \prettyref{asmp:factors_genr}.
We also have for a sufficiently small constant $\tilde\gamma>0$
\begin{align}\label{eq:m14}
    &\|(\bI-\bU\bU^{\sfT})\mathbb{E}[\bmu_{y_{i}}\bmu_{y_{i}}^{\sfT}](\bI-\bU\bU^{\sfT}) - (\bI-\bV_{r}\bV_{r}^{\sfT})\mathbb{E}[\bmu_{y_{i}}\bmu_{y_{i}}^{\sfT}](\bI-\bV_{r}\bV_{r}^{\sfT})\| 
    \nonumber \\
    & = \|(\bU\bU^{\sfT}\bM\tilde\bLambda\bM^{\sfT}\bU\bU^{\sfT} - \bU\bU^{\sfT}\bM\tilde\bLambda\bM^{\sfT} - \bM\tilde\bLambda\bM^{\sfT}\bU\bU^{\sfT}) 
    \nonumber \\
    &- (\bV_{r}\bV_{r}^{\sfT}\bM\tilde\bLambda\bM^{\sfT}\bV_{r}\bV_{r}^{\sfT}  - \bV_{r}\bV_{r}^{\sfT}\bM\tilde\bLambda\bM^{\sfT} - \bM\tilde\bLambda\bM^{\sfT}\bV_{r}\bV_{r}^{\sfT})\|  
    \nonumber \\
    & \leq \|\bU\bU^{\sfT}\bM\tilde\bLambda\bM^{\sfT}\bU\bU^{\sfT} - \bV_{r}\bV_{r}^{\sfT}\bM\tilde\bLambda\bM^{\sfT}\bV_{r}\bV_{r}^{\sfT}\| 
    \nonumber \\
    &+ 2\|\bU\bU^{\sfT}\bM\tilde\bLambda\bM^{\sfT} - \bV_{r}\bV_{r}^{\sfT}\bM\tilde\bLambda\bM^{\sfT}\| 
    \nonumber \\
    & \leq 2 \|\tilde\bLambda\| \|\bU\bU^{\sfT} - \bV_{r}\bV_{r}^{\sfT} \| + 2 \|\tilde\bLambda\| \|\bU\bU^{\sfT} - \bV_{r}\bV_{r}^{\sfT} \| 
    \leq \tilde \gamma
\end{align}
where the last inequality followed using \eqref{eq:m10} and \eqref{eq:m11} with $\sigma<\gamma_5$ from \prettyref{asmp:model} and $\|\tilde \bLambda\|=\lambda_1(\EE[\bmu_{y_i}\bmu_{y_i}^\sfT])\leq C_3$ from \prettyref{asmp:regularity}.
Combining \eqref{eq:m12},\eqref{eq:m13},\eqref{eq:m14} we get
\begin{align*}
\lambda_{k} ((\bI-\bV_{r}\bV_{r}^{\sfT})\mathbb{E}[\bmu_{y_{i}}\bmu_{y_{i}}^{\sfT}](\bI-\bV_{r}\bV_{r}^{\sfT})) 
\geq \lambda_{k} (\mathbb{E}[\bmu_{y_{i}}\bmu_{y_{i}}^{\sfT}]) - \gamma - \tilde \gamma 
\geq 
\xi_4  
\end{align*}
for a constant $\xi_4>0$. Therefore we proved \prettyref{lmm:results_on_mean_matrices}.

\end{itemize}
\end{proof}

\subsubsection{Isotropic noise}
\begin{proof}
For the case that the noise is isotropic., i.e., $\EE \qth{\bepsilon_i \bepsilon_i^{\sfT}} = \sigma^2 \bI_d$, we will show that \eqref{eq:m11} still holds under the weaker version of \prettyref{asmp:factors_genr}(d). Therefore the proof still works. For showing \eqref{eq:m11} still holds, we let $\bar{\bSigma}$ denote the matrix $\tilde{\bSigma} + \sigma^2 \bI_d$. The same as the non-isotropic case, we will use the decomposition  
\begin{align*}
    \bSigma 
    &= \bU \bLambda \bU^{\sfT} + \bM\tilde \bLambda\bM^{\sfT} + \EE [\bepsilon_{i}\bepsilon_{i}^{\sfT}] \\
    &= \bU \bLambda \bU^{\sfT} + \bM\tilde \bLambda\bM^{\sfT} + \sigma^2 \bI_d,
\end{align*}
and 
\begin{equation*}
    \bar{\bSigma} = (\bU, \bU_{\perp},\bU_0)\begin{pmatrix}
 \bLambda & &  \\
  & \bar \bLambda & \\
 & & 0
 \end{pmatrix} (\bU, \bU_{\perp},\bU_0)^{\sfT} + \sigma^2 \bI_d.
\end{equation*} 
Recall in the proof for non-isotropic case, we show that the set of top $r$ eigenvectors of $\tilde{\bSigma}$ is given by $\bU$. Therefore $\bar{\bSigma} = \tilde{\bSigma} +\sigma^2 \bI_d$  have the same set of top $r$ eigenvectors as $\tilde{\bSigma}$, which is $\bU$ (here we use $\sigma^2 \bI_d$ is isotropic). Since $\bV_{r}$ is the top $r$ eigenvectors of $\bSigma$, by \prettyref{lmm:davis-kahan}, 
\begin{align} \label{eq:dist_U_Vr_2}
\dist (\bU, \bV_{r}) := \|\bU\bU^{\sfT}- \bV_{r}\bV_{r}^{\sfT}\| \leq \frac{2\|\bSigma- \bar{\bSigma}\|}{\lambda_{r}(\bar{\bSigma})- \lambda_{r+1}(\bar{\bSigma})}.
\end{align}
We bound the numerator and denominator of the right most term in the above display in the following way. To bound the denominator, we use \eqref{eq:m8} to get  \begin{align}
	\lambda_{r}(\bar{\bSigma}) - \lambda_{r+1}(\bar{\bSigma}) = \lambda_{r}(\tilde{\bSigma}) - \lambda_{r+1}(\tilde{\bSigma})\geq \sigma_{\min}(\bB)^{2} - \|\bar \bLambda\| \geq \frac12 \sigma_{\min}(\bB)^{2}
\end{align} 
To bound the numerator $\|\bSigma- \bar{\bSigma}\|$ in \eqref{eq:dist_U_Vr_2} we note the decomposition
\begin{align}\label{eq:m15}
    \bSigma- \bar{\bSigma} & = (\bU \bLambda \bU^{\sfT} + \bM\bV\bM^{\sfT} + \sigma^2 \bI_d) - (\bU \bLambda \bU^{\sfT} + (\bI-\bU\bU^{\sfT})\bM\bV\bM^{\sfT}(\bI-\bU\bU^{\sfT})^{\sfT} + \sigma^2 \bI_d) 
    \nonumber\\
    &= \bU\bU^{\sfT} \bM\bV\bM^{\sfT} +\bM\bV\bM^{\sfT}\bU\bU^{\sfT} - \bU\bU^{\sfT} \bM\bV\bM^{\sfT} \bU\bU^{\sfT} \nonumber\\
    & \leq  \|\bU\bU^{\sfT} \bM\bV\bM^{\sfT}\| +\|\bM\bV\bM^{\sfT}\bU\bU^{\sfT}\| + \|\bU\bU^{\sfT} \bM\bV\bM^{\sfT} \bU\bU^{\sfT}\|   \nonumber\\
    &\leq 2\|\bU\|\|\bU^{\sfT}\bM\|\|\bV\|\|\bM^{\sfT}\| + \|\bU\|\|\bU^{\sfT} \bM\|\|\bV\|\bM^{\sfT} \bU\|\|\bU^{\sfT}\|  \nonumber\\
    & \leq \tilde C_1(\|\bU^{\sfT}\bM\|)
\end{align}
for a constant $\tilde C_1>0$. Using \eqref{eq:dist_U_Vr_2}, 
\begin{align}
	\dist (\bU, \bV_{r})  \leq \tilde C_2\pth{\frac{\|\bU^{\sfT}\bM\|}{\sigma_{\min}(\bB)^2}} 
	\leq \tilde C_3\pth{\frac{1}{\sigma_{\max}(\bB)} \pth{\sigma \vee \sqrt{\frac{1}{\log n}}}},
\end{align}
for constants $\tilde C_2,\tilde C_3>0$, where the last inequality follows from the weaker version of \prettyref{asmp:factors_genr}(c). Therefore we show that \eqref{eq:m11} still holds for isotropic noise with weaker assumption. The rest of the proofs remain the same as the non isotropic case.

\end{proof}

\subsection{Proof of \prettyref{lmm:eigenvalue_concentration}}
Back to the proof of \prettyref{lmm:eigenvalue_concentration}. For the first inequality in the lemma statement,
\begin{align*}
	&|\lambda_{k}(\EE[\tilde{\bmu}_{y_i}\tilde{\bmu}_{y_i}^{\sfT}]) - \lambda_{k}(\frac1n\sum_{i=1}^{n}\tilde{\bmu}_{y_i} \tilde{\bmu}_{y_i}^{\sfT})|
	\stepa{\leq} \|\EE[\tilde{\bmu}_{y_i}\tilde{\bmu}_{y_i}^{\sfT}] - \frac1n\sum_{i=1}^{n}\tilde{\bmu}_{y_i} \tilde{\bmu}_{y_i}^{\sfT}\| \\
	&= \|\sum_{j=1}
	^{K}p_{j}\tilde{\bmu}_{j}\tilde{\bmu}_{j}^{\sfT} - \sum_{j=1}
	^{K}\frac{n_j}{n}\tilde{\bmu}_{j}\tilde{\bmu}_{j}^{\sfT} \|
	\leq \sum_{j=1}^{K}|\frac{n_{j}}{n} -p_{j}| \|\tilde{\bmu}_{j}\tilde{\bmu}_{j}^{\sfT}\|
	\stepb{\leq} \tilde C_1\frac{\log n}{\sqrt{n}}
\end{align*}
with probability greater than $1-\tilde C_1/n$, for a constant $\tilde C_1>0$, where $n_j = |\{i \in [n]: y_i=j\}|$. (a) follows from Weyl's inequality (\prettyref{lmm:weyl}), (b) holds since $n_{j}$ is a binomial random variable (so that it has sub-Gaussian tail bound), and $\|\tilde{\bmu}_{j}\tilde{\bmu}_{j}^{\sfT}\|\leq \|\tilde{\bmu}_j\|^2 \leq \|\bmu_j\|^2 \leq \tilde C_2^2$ for a constant $\tilde C_2>0$ by (\ref{eq:bound_mu}). To show the second inequality in the lemma statement we note that (as we have already conditioned for $\hat \bV_r$)
\begin{align*}
	& \lambda_{k} (\mathbb{E}[\tilde{\bmu}_{y_{i}}\tilde{\bmu}_{y_{i}}^{\sfT}]) \\
	&=\lambda_{k} ((\bI-\hat{\bV}_{r}\hat{\bV}_{r}^{\sfT})\mathbb{E}[\bmu_{y_{i}}\bmu_{y_{i}}^{\sfT}](\bI-\hat \bV_{r}\hat \bV_{r}^{\sfT}))\\  
	&\stepa{\geq}  \lambda_{k} ((\bI-\bV_{r}\bV_{r}^{\sfT})\mathbb{E}[\bmu_{y_{i}}\bmu_{y_{i}}^{\sfT}](\bI-\bV_{r}\bV_{r}^{\sfT})) \\
	& - \|(\bI-\bV_{r}\bV_{r}^{\sfT})\mathbb{E}[\bmu_{y_{i}}\bmu_{y_{i}}^{\sfT}](\bI-\bV_{r}\bV_{r}^{\sfT}) - (\bI-\hat{\bV}_{r}\hat{\bV}_{r}^{\sfT})\mathbb{E}[\bmu_{y_{i}}\bmu_{y_{i}}^{\sfT}](\bI-\hat{\bV}_{r}\hat{\bV}_{r}^{\sfT})\| \\ 
	& \stepb{\geq} \lambda_{k} ((\bI-\bV_{r}\bV_{r}^{\sfT})\mathbb{E}[\bmu_{y_{i}}\bmu_{y_{i}}^{\sfT}](\bI-\bV_{r}\bV_{r}^{\sfT})) 
	- \xi\|\bDelta\| \\
	& \stepc{\geq} \xi_3 - {\xi_1\over \sigma_{\max}(\bB)}\pth{\sigma\vee \frac 1{\sqrt{\log n}}}
	\geq  \xi_6,
\end{align*}
for constants $\xi,\xi_6>0$, where (a) follows using \prettyref{lmm:weyl}, (b) follows using \eqref{eq:eigen-decomp} as 
\begin{align*}
    &\|(\bI-\hat{\bV}_r\hat{\bV}_r^{\sfT})\mathbb{E}[\bmu_{y_{i}}\bmu_{y_{i}}^{\sfT}](\bI-\hat{\bV}_r\hat{\bV}_r^{\sfT}) - (\bI-\bV_{r}\bV_{r}^{\sfT})\mathbb{E}[\bmu_{y_{i}}\bmu_{y_{i}}^{\sfT}](\bI-\bV_{r}\bV_{r}^{\sfT})\| 
    \nonumber \\
    & = \|(\hat{\bV}_r\hat{\bV}_r^{\sfT}\bM\tilde\bLambda\bM^{\sfT}\hat{\bV}_r\hat{\bV}_r^{\sfT} - \hat{\bV}_r\hat{\bV}_r^{\sfT}\bM\tilde\bLambda\bM^{\sfT} - \bM\tilde\bLambda\bM^{\sfT}\hat{\bV}_r\hat{\bV}_r^{\sfT}) 
    \nonumber \\
    &- (\bV_{r}\bV_{r}^{\sfT}\bM\tilde\bLambda\bM^{\sfT}\bV_{r}\bV_{r}^{\sfT}  - \bV_{r}\bV_{r}^{\sfT}\bM\tilde\bLambda\bM^{\sfT} - \bM\tilde\bLambda\bM^{\sfT}\bV_{r}\bV_{r}^{\sfT})\|  
    \nonumber \\
    & \leq \|\hat{\bV}_r\hat{\bV}_r^{\sfT}\bM\tilde\bLambda\bM^{\sfT}\hat{\bV}_r\hat{\bV}_r^{\sfT} - \bV_{r}\bV_{r}^{\sfT}\bM\tilde\bLambda\bM^{\sfT}\bV_{r}\bV_{r}^{\sfT}\| 
    \nonumber \\
    &+ 2\|\hat{\bV}_r\hat{\bV}_r^{\sfT}\bM\tilde\bLambda\bM^{\sfT} - \bV_{r}\bV_{r}^{\sfT}\bM\tilde\bLambda\bM^{\sfT}\| 
    \nonumber \\
    & \leq 2 \|\tilde\bLambda\| \|\hat{\bV}_r\hat{\bV}_r^{\sfT} - \bV_{r}\bV_{r}^{\sfT} \| + 2 \|\tilde\bLambda\| \|\hat{\bV}_r\hat{\bV}_r^{\sfT} - \bV_{r}\bV_{r}^{\sfT} \| 
    \leq \xi\|\bDelta\|,
\end{align*}
for a constant $\xi>0$ and (c) follows as \prettyref{lmm:bound_on_Delta_norm} and the last inequality follows whenever $\sigma_{\max}(\bB)>\tilde C_3\sigma$ for a large enough constant $\tilde C_3>0$.

\section{Proofs for \prettyref{cor:dispro}}
In order to proof \prettyref{cor:dispro}, we only need to directly check that \prettyref{asmp:factors_genr} is satisfied when \prettyref{asmp:regularity}, \prettyref{asmp:approximate_perpendicularity_weak} and \prettyref{asmp:approximate_perpendicularity_disproportionate}  and $d^3 \leq O ( \frac{n}{(\log n)^{2}})$ hold (notice that since the noise is isotropic, we only need the weaker version of \prettyref{asmp:factors_genr}(c)).

\end{document}